\newtheorem{theorem}{Theorem}[section]
\newtheorem{lemma}[theorem]{Lemma}
\newtheorem{prop}[theorem]{Proposition}
\newtheorem{definition}[theorem]{Definition}
\newtheorem{corollary}[theorem]{Corollary}
\theoremstyle{definition}
\newtheorem{remark}[theorem]{Remark}
\def\eps{\varepsilon}
\def\Om{\Omega}
\def\oOm{\overline{\Omega}}
\def\K{\mathcal{K}}
\def\R{\mathbb{R}}
\def\H{\mathcal{H}}
\def\N{\mathbb{N}}
\def\A{\mathcal{A}}
\def\eps{\varepsilon}
\title[Fuglede-type arguments for isoperimetric problems and applications to stability]{Fuglede-type arguments for isoperimetric problems and applications to stability among convex shapes}
\author[R. Prunier]{Raphaël Prunier}
\address[Raphaël Prunier]{Sorbonne Universit\'e and Universit\'e Paris Cit\'e, CNRS, IMJ-PRG, F-75005 Paris, France.
}
\email{  raphael.prunier@imj-prg.fr}
\begin{document}
\maketitle
\begin{abstract}
This paper is concerned with stability of the ball for a class of isoperimetric problems under convexity constraint. Considering the problem of minimizing $P+\eps R$ among convex subsets of $\R^N$ of fixed volume, where $P$ is the perimeter functional, $R$ is a perturbative term and $\eps>0$ is a small parameter, stability of the ball for this perturbed isoperimetric problem means that the ball is the unique (local, up to translation) minimizer for any $\eps$ sufficiently small. We investigate independently two specific cases where $\Om\mapsto R(\Om)$ is an energy arising from PDE theory, namely the capacity and the first Dirichlet eigenvalue of a domain $\Om\subset\R^N$. While in both cases stability fails among all shapes, in the first case we prove (non-sharp) stability of the ball among convex shapes, by building an appropriate competitor for the capacity of a perturbation of the ball. In the second case we prove sharp stability of the ball among convex shapes by providing the optimal range of $\eps$ such that stability holds, relying on the \textit{selection principle} technique and a regularity theory under convexity constraint. \end{abstract}

\section{Introduction}

\subsection{Stability in shape optimization}
In this article we are interested in the question of stability of the ball for isoperimetric-type problems under a convexity constraint. It takes place in the framework of shape optimization problems involving the perimeter functional $P$, which consists in the minimization problems
\[\inf\left\{P(A)+R(A), \ A\in\mathcal{A}\right\}\]
where $\mathcal{A}$ is a class of measurable subsets $A\subset \R^N$ of volume $|A|=1$, $P(A)$ is the perimeter of $A$ in the usual De Giorgi sense, and $R:\mathcal{A}\to\R$ is a functional thought of as a perturbative term. Due to the well-known \textit{isoperimetric inequality}, any ball $B\subset\R^N$ of unit volume is minimal for the minimization of $P+R$ among all sets in $\A$ when $R=0$: for all measurable sets $A\subset\R^N$ with volume $|A|=1$, $P(A)\geq P(B)$ with equality if and only if (up to a set of measure $0$) $A$ is a ball of unit volume. By stability of the ball for the problem $P+R$ we mean that, considering $R_\eps:=\eps R$ for a small parameter $\eps>0$, then provided $\eps$ is close enough to $0$:
\[\textit{The ball }B \textit{ is a local minimizer of }P+\eps R \textit{ in }\mathcal{A}\]
where by locality we mean that the $L^1$ distance of $\Om\in\mathcal{A}$ with $B$ is small (\textit{i.e.} $|\Om\Delta B|\ll1$). In other words, this notion of stability states that the ball is still a minimizer of the perimeter functional when it is perturbed by another functional $R$. Another way of putting it comes from rewriting the minimality of $B$ as 
\[P(A)-P(B)\geq \eps \left(R(B)-R(A)\right)\]
so that, assuming moreover that $R(B)\geq R(A)$ for each $A\in \mathcal{A}$ (which is always verified for the cases we have in mind), then the deficit of perimeter quantifies the deficit of the functional $R$. Let us mention that this point of view on stability encompasses what is usually refered to in the literature as quantitative inequalities, the most famous one being the \textit{sharp quantitative isoperimetric inequality}, proven in \cite{FMP08}. It claims that by setting 
\[\delta_F(A):=\inf\{|A\Delta (B+x)|,\ x\in\R^N\}\]
the Fraenkel asymmetry of a set $A\subset \R^N$ of unit volume, then there exists $c_N>0$ such that 
\[P(A)-P(B)\geq c_N\delta_F(A)^2. \] 
In our stability setting this can be rephrased into stability of $P-\delta_F^2$ among all sets of unit volume. The literature on quantitative inequalities in shape optimization is very prolific, and we refer for instance among many others to \cite{FMP08}, \cite{BdPV15}, \cite{FFMMM15}, \cite{AFM13}, \cite{CL12}, \cite{BNT}, \cite{FMP10} and to \cite{Fus15} for a nice review of stability results linked to the isoperimetric inequality.
\subsection{Stability of the ball under convexity constraint}\label{sect:intro_stab_cccc}
We are more specifically interested in shape optimization problems where $\mathcal{A}$ only contains convex shapes, that is
\[\inf\left\{P(K)+\eps R(K), \ K\in\K^N, \ |K|=1\right\}\]
where $\K^N$ denotes the class of convex bodies of $\R^N$ (that is, compact convex sets with non-empty interior). The addition of the convexity constraint is interesting since stability among all shapes fails for the functionals we will consider. This happens for some problems where $R$ is of PDE-type, by which we mean that $R(K)$ is an energy associated to a PDE which is set on $K$ or $\R^N\setminus K$. In this paper we investigate independently two specific problems falling into this category. Let us now introduce them and state the stability results associated. 

\subsubsection{Weak stability for $P+\text{Cap}^{-1}$}
We are interested in a first problem which involves a PDE set on the exterior of the domain. For $N\geq3$ we introduce the capacity functional $\text{Cap}:\K^N\rightarrow \R$ which we define as the usual \textit{electrostatic capacity}:
\begin{align}\label{eq:capN=3}\text{Cap}(K)&:=\inf\left\{\int_{\R^N}|\nabla u|^2, \ u\in C^{\infty}_c(\R^N) \text{ with } u\geq \textbf{1}_K\right\}.\end{align}

When $N=2$, one can see by looking at the energy of rescaled and truncated versions of the fundamental solution of the Laplacian that the infimum in \eqref{eq:capN=3} is always $0$. Therefore one must proceed differently to define the capacity for $N=2$ (see Remark \ref{rk:2dcap}).

We now set the problem: letting  $\eps>0$ be a small parameter, we are interested in the minimization 
\begin{equation}\label{eq:pb_cap1st}\inf\left\{P(K)+\eps\text{Cap}(K)^{-1},\ K\in\K^N,\ |K|=1\right\}\end{equation}
for $N\geq3$. Before giving motivations and context for this problem let us state the stability result which we obtained.
\begin{theorem}[Weak stability of the ball for the capacity]\label{th:stab_cap}
Let $N\geq3$. There exists $\eps_0=\eps_0(N)>0$ such that for any $\eps \in(0,\eps_0)$ the centered ball of unit volume $B$ is the unique (up to translation) minimizer of \eqref{eq:pb_cap1st}.
\end{theorem}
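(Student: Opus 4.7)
The plan is a Fuglede-type second-order expansion combined with an \emph{a priori} reduction to nearly spherical competitors. As $\{K\in\K^N:|K|=1\}$ is compact under Hausdorff convergence and $P$, $\text{Cap}^{-1}$ are continuous on it, minimizers $K_\eps$ of \eqref{eq:pb_cap1st} exist. The isoperimetric inequality forces $K_\eps\to B$ (up to translation) as $\eps\to 0$, and among convex bodies $L^1$-closeness and Hausdorff closeness are equivalent. Hence, after translating to center the barycenter at $0$, I may parametrize $\partial K_\eps=\{(1+u(\theta))\theta:\theta\in S^{N-1}\}$ with $u$ Lipschitz and $\|u\|_{L^\infty}\to0$ as $\eps\to0$; the volume and barycenter conditions give $|\int_{S^{N-1}}u\,d\sigma|$ and $|\int_{S^{N-1}} u\theta_i\,d\sigma|$ of size $O(\|u\|_{L^\infty}^2)$.

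A classical Fuglede-type computation then yields the perimeter lower bound
\[P(K_\eps)-P(B)\geq c_N\|u\|_{H^1(S^{N-1})}^2\]
for some $c_N>0$, valid once $\|u\|_{L^\infty}$ is small. The constant and coordinate modes of $u$ are absorbed by the volume and barycenter normalizations above, and while $u$ is merely Lipschitz, convexity of $K_\eps$ provides a uniform Lipschitz bound on $u$, so the second-order expansion of the area element goes through at the Lipschitz level.

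The heart of the argument is a matching upper bound on $\text{Cap}(K_\eps)-\text{Cap}(B)\geq 0$ (the inequality being Pólya–Szegő). I would build an explicit competitor in the variational problem \eqref{eq:capN=3}: let $\psi_B(x)=\min(1,R_B^{N-2}|x|^{2-N})$ be the capacitary potential of $B$ (of radius $R_B$), and set $T(r\theta):=(r+\varphi(r)u(\theta))\theta$ for a radial cutoff $\varphi\in C^\infty_c((0,\infty))$ equal to $1$ in a neighborhood of $r=R_B$. Then $T$ is a diffeomorphism of $\R^N$ sending $\partial B$ onto $\partial K_\eps$ and agreeing with the identity outside a fixed annulus, so $v:=\psi_B\circ T^{-1}$ (after a standard truncation and mollification) is admissible in \eqref{eq:capN=3}. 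A direct change of variables in $\int|\nabla v|^2$, exploiting that the linear term in $u$ vanishes by the Euler–Lagrange equation satisfied by $\psi_B$, produces
\[\text{Cap}(K_\eps)\leq \int|\nabla v|^2\leq \text{Cap}(B)+C_N\|u\|_{H^1(S^{N-1})}^2.\]

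Combining the two bounds, together with $\text{Cap}(B)^{-1}-\text{Cap}(K_\eps)^{-1}\leq \text{Cap}(B)^{-2}(\text{Cap}(K_\eps)-\text{Cap}(B))$, gives a deficit of size at least $(c_N-\eps C_N')\|u\|_{H^1}^2$, which is strictly positive for $\eps<\eps_0:=c_N/(2C_N')$ whenever $u\not\equiv 0$. The main obstacle is the third step: producing a test function whose Dirichlet energy exceeds $\text{Cap}(B)$ only by a quadratic functional of $u$ genuinely controlled by the $H^1(S^{N-1})$-seminorm of the perimeter lower bound. The subtlety is that $|\nabla\psi_B|$ does not vanish on $\partial B$, so the leading quadratic correction involves a boundary integral of $u^2$ weighted by $|\nabla\psi_B|^2|_{\partial B}$, together with cross terms in $u$ and $\nabla_\tau u$ coming from the Jacobian and the pulled-back gradient, all of which must be handled via Cauchy–Schwarz and the polynomial decay of $|\nabla\psi_B|$ at infinity. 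The lossiness of this radial-transport competitor is also the reason the resulting $\eps_0$ is non-sharp: the optimal threshold would require identifying the exact Dirichlet-to-Neumann quadratic form of the exterior Laplacian on $\partial B$.
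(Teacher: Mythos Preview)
Your strategy coincides with the paper's: existence of a minimizer, isoperimetric control forcing near-sphericity, Fuglede's quadratic lower bound on the perimeter deficit, and a quadratic upper bound on $\text{Cap}(B_h)-\text{Cap}(B)$ via an explicit competitor in \eqref{eq:capN=3}. The only substantive difference is the choice of diffeomorphism: the paper pulls back by the $0$-homogeneous map $\phi_h(x)=x/(1+h(x/|x|))$, whose Jacobian is exactly $(1+h)^{-N}$ and which makes the cross term $\nabla\widetilde h\cdot\nabla u_B(\phi_h)$ vanish identically (radial gradient against tangential gradient), yielding a very short computation; your compactly supported radial transport $T(r\theta)=(r+\varphi(r)u(\theta))\theta$ would also work but produces extra $\varphi'$-terms to track. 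Either choice gives $\text{Cap}(B_h)-\text{Cap}(B)\leq C_N\|h\|_{H^1}^2$.

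Two points need tightening. First, Fuglede's lower bound $P(B_h)-P(B)\geq c_N\|h\|_{H^1}^2$ requires $\|h\|_{W^{1,\infty}}$ small, not merely $\|h\|_{L^\infty}$ small with a uniform Lipschitz bound; for convex bodies you get this for free because small perimeter deficit forces $\|h\|_{W^{1,\infty}}$ small (the paper invokes \cite[Lemma~3.3]{Fus15}), but your sentence as written is not correct. Second, the linear-in-$u$ term in the expansion of $\int|\nabla(\psi_B\circ T^{-1})|^2$ does \emph{not} vanish by the Euler--Lagrange equation of $\psi_B$: after integration by parts it reduces to a constant multiple of $\int_{\partial B}u$ (since $|\partial_n\psi_B|$ is constant on $\partial B$), and it is the \emph{volume constraint} $|B_h|=|B|$ that makes $\int_{\partial B}u=O(\|u\|_{L^2}^2)$.
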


We call this result \textit{weak} stability in the sense that the $\eps_0$ found is not optimal (contrarily to Theorem \ref{th:mainthm} below), and is in fact not even explicit. On the other hand note that this minimality result is global. It is the $N\geq3$ version of the two dimensional result \cite[Corollary 1.3]{GNR18}, where the authors prove weak stability of the ball with the logarithmic capacity instead. Our approach is however very different, see Section \ref{sect:strat_stabbb}.

Due to the \textit{isocapacitary inequality} (see for instance \cite{dPMM21}) which states that
\[\forall\Om\subset\R^N\text{ open},\ |\Om|=1,\ \text{Cap}(\Om)\geq \text{Cap}(B)\]
and the \textit{isoperimetric inequality}, there is a competition in the minimization \eqref{eq:pb_cap1st} which makes the problem non trivial. 
The introduction of the convexity constraint in the problem comes from the fact that existence does not hold without any additional geometric assumption (for non-existence for any $\eps>0$ and in all dimensions $N\geq2$  see \cite[Theorem 3.2 and Theorem 6.2]{GNR15}). On the other hand \eqref{eq:pb_cap1st} admits minimizers for any $\eps>0$ (see \cite[Theorem 1.1]{GNR18}).

\subsubsection{Strong stability for $P-\lambda_1$}If $\Om\subset \R^N$ is an open set with finite volume we let $\lambda_1(\Om)$ be its first Dirichlet eigenvalue, defined as the smallest number $\lambda\in\R$ such that there exists a non trivial function $u$ verifying 
\[\begin{cases}-\Delta u=\lambda u,\ \text{ in }\Om\\ u\in H^1_0(\Om)\end{cases}\] where the first equation holds in the distributional sense in $\Om$. It has a variational characterization:
\[\lambda_1(\Om):=\inf\left\{\frac{\int_\Om|\nabla u|^2}{\int_\Om|u|^2},\ u\in H^1_0(\Om)\right\}.\] 

For $K\in \K^N$ we set $\lambda_1(K):=\lambda_1(\text{Int}(K))$. 
Consider then the minimization problems

\begin{equation} \nonumber \inf\left\{P(\Om)-c\lambda_1(\Om),\  \Om\subset \R^N\text{ open}, |\Om|=1\right\},\ \ \ \inf\left\{P(K)-c\lambda_1(K),\  K\in\K^N,\ |K|=1\right\}\label{eq:Pc}\end{equation} for any fixed parameter $c>0$. There is a competition between the perimeter and $\lambda_1$, as it is known from the \textit{isoperimetric} and \textit{Faber-Krahn inequalities} that a ball $B$ of volume $1$ minimizes them both among shapes of unit volume. Intuitively, we expect that the perimeter is the dominant term for small values of $c$ while we expect that this is no longer the case in the regime $c\rightarrow+\infty$, so that $B$ might be a local minimizer for small values of $c$ and not for large values of $c$. As such there is no global minimizer to any of the two problems, taking for instance a sequence of long thin rectangles of unit volume. However, even in a loose local sense there is no stability of the ball for the first problem, meaning that for any $c>0$ there exists a sequence $(\Om_{j,c})_{j\in\N}$ of open sets with 
\[|\Om_{j,c}|=1,\ \left|\Om_{j,c}\Delta B\right|\to0\text{ and } (P-c\lambda_1)(\Om_{j,c})<(P-c\lambda_1)(B) \text{ for each }j\in\N\]
as one sees by comparing the energy of the ball to the energy of the ball perforated by a small hole at its center (see for instance \cite[Proposition 6.1]{DL19}). 
A strong geometric constraint such as convexity of the admissible sets forbids this kind of behaviour, so that one might expect stability in this case. This is the object of the second main result of this article, which can be seen as sharp stability of the ball under convexity constraint for the functional $P-\lambda_1$. The result is as follows.

\begin{theorem}[Sharp stability of the ball for $\lambda_1$]\label{th:mainthm}
Let $N\geq2$. Let $\omega_N$ be the volume of a ball of radius $1$, and $p_N:=N\omega_N$, $l_N:=j_{N/2-1}^2$ 
be respectively the perimeter and first eigenvalue of a ball of radius $1$ ($j_{N/2-1}$ is the first zero of the Bessel function of the first kind of order $N/2-1$). Set \begin{equation}\label{eq:defc*}c^*:=\frac{N(N+1)p_N}{4l_N(l_N-N)\omega_N^{\frac{N+1}{N}}}\end{equation}
Let $B$ be a ball of unit volume.
\begin{itemize}
\item Let $0<c<c^*$. Then there exists $\delta_c>0$ such that
\begin{equation}\label{eq:goal_ineq}\forall K\in\K^N, |K|=1 \text{ with }|K\Delta B|\leq \delta_c, \ (P-c\lambda_1)(K)\geq (P-c\lambda_1)(B).\end{equation}
\item Let $c>c^*$. There exists a sequence of smooth convex bodies $(K_{j,c})_{j\in\N}$ of unit volume for which $|K_{j,c}\Delta B|\rightarrow 0$ and
\begin{equation}\label{eq:no_stab} (P-c\lambda_1)(K_{j,c})< (P-c\lambda_1)(B) \text{ for each }j\in\N.\end{equation}
\end{itemize}
\end{theorem}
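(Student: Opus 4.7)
Part 1 will follow the selection-principle strategy introduced by Cicalese--Leonardi: reduce stability to a Fuglede-type second order expansion around the ball after upgrading the regularity of any potential counterexample. Part 2 is a direct construction of ellipsoidal competitors.

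\textbf{Part 1 (stability for $c<c^*$).} I would argue by contradiction: suppose there exists a sequence $(K_j)\subset\K^N$ with $|K_j|=1$, $|K_j\Delta B|\to 0$ and $(P-c\lambda_1)(K_j)<(P-c\lambda_1)(B)$. The selection principle replaces $K_j$ by a minimizer $\tilde K_j$ of a penalized problem of the form
\[\inf\bigl\{(P-c\lambda_1)(K)+\Lambda\bigl||K\Delta B|-\eta_j\bigr|\,:\,K\in\K^N,\ |K|=1\bigr\},\]
with $\eta_j\downarrow0$ chosen carefully and $\Lambda>0$ large (existence in $\K^N$ follows from Blaschke selection combined with Hausdorff continuity of $\lambda_1$). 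Minimality ensures that $\tilde K_j$ is still a violator of the desired inequality and that $|\tilde K_j\Delta B|\to 0$. The regularity theory for shape minimizers under convexity constraint then produces, for $j$ large, a normal graph parametrization
\[\partial \tilde K_j=\{x+h_j(x)\nu_B(x):x\in\partial B\},\qquad \|h_j\|_{C^{2,\alpha}(\mathbb{S}^{N-1})}\to0.\]
Using the volume constraint $\int_{\mathbb{S}^{N-1}} h_j=O(\|h_j\|_{L^2}^2)$ and a small translation to neutralize the mode-$1$ components, I would then plug into the classical second variations: for the perimeter
\[P(\tilde K_j)-P(B)=\tfrac{1}{2}\int_{\mathbb{S}^{N-1}}\bigl(|\nabla_\tau h_j|^2-(N-1)h_j^2\bigr)+o(\|h_j\|_{H^1}^2),\]
and the Hadamard-type second variation of $\lambda_1$ on the ball (explicit because the first eigenfunction on $B$ is radial), which yields a quadratic form diagonalizable in spherical harmonics. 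Decomposing $h_j=\sum_{k\geq 2} h_{j,k}$, the coefficient of $\|h_{j,k}\|_{L^2}^2$ in $(P-c\lambda_1)(\tilde K_j)-(P-c\lambda_1)(B)$ takes the form $a_k-c\,b_k$ with $a_k=k(k+N-2)-(N-1)$ and $b_k>0$ computed from Bessel functions; the definition \eqref{eq:defc*} is tuned so that $a_2-c^*b_2=0$, and an explicit check shows that $k=2$ is the binding mode. For $c<c^*$ every coefficient is strictly positive, contradicting the strict negativity of $(P-c\lambda_1)(\tilde K_j)-(P-c\lambda_1)(B)$.

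\textbf{Part 2 (sharpness for $c>c^*$).} Since the destabilizing mode is $k=2$, I would test with ellipsoids. Consider the family $E_t\subset\R^N$ of semi-axes $(1+t,(1+t)^{-1/(N-1)},\dots,(1+t)^{-1/(N-1)})$, rescaled to unit volume; these are smooth, convex, and converge to $B$ in $L^1$. Both $P(E_t)$ and $\lambda_1(E_t)$ reduce at this order to the same zonal mode-$2$ harmonic, so expanding to order $t^2$ gives
\[(P-c\lambda_1)(E_t)-(P-c\lambda_1)(B)=t^2(a_2-c\,b_2)\,\|Y_2\|_{L^2}^2+O(t^3),\]
which is strictly negative for $t$ small whenever $c>c^*$. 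Hence $K_{j,c}:=E_{1/j}$ satisfies \eqref{eq:no_stab}.

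\textbf{Main obstacle.} The most delicate step is the regularity upgrade in part 1: showing that penalized minimizers $\tilde K_j$ converge to $B$ in $C^{2,\alpha}$, rather than only in $L^1$ or Hausdorff sense. Under convexity constraint a minimizer of $P+f$ is generically only $C^{1,1}$, with loss of regularity along the facets where the constraint saturates. The way around this is to exploit the uniform strict convexity of the limit $B$: once $\tilde K_j$ is close enough to $B$ in Hausdorff distance it is itself uniformly convex, hence the convexity constraint is locally inactive and $\tilde K_j$ satisfies a smooth free Euler--Lagrange equation involving the boundary trace of $|\nabla u_{1,\tilde K_j}|^2$, from which standard bootstrap produces the desired $C^{2,\alpha}$-convergence. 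Making this inactivation rigorous for the nonlocal spectral functional $\lambda_1$ is the heart of the argument.
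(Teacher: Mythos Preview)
Your overall architecture for Part 1 (selection principle, penalized minimization, Fuglede-type second variation in spherical harmonics) matches the paper's strategy. Part 2 is also fine: the paper simply cites \cite{N14}, and your ellipsoid construction is a correct concrete instance of the $k=2$ destabilizing direction.

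The genuine gap is the regularity upgrade. Your claim that ``once $\tilde K_j$ is close enough to $B$ in Hausdorff distance it is itself uniformly convex, hence the convexity constraint is locally inactive'' is not justified and, as stated, is false: Hausdorff (or even $C^{1,\alpha}$) closeness to a uniformly convex set imposes no lower bound on the principal curvatures of $\tilde K_j$. A convex body can approximate $B$ in $C^{1,\alpha}$ and still have flat or degenerate pieces where the convexity constraint is saturated, so you cannot conclude that $\tilde K_j$ satisfies a free Euler--Lagrange equation and bootstrap to $C^{2,\alpha}$. In fact the paper stresses (Remark~\ref{rk:optH}) that the regularity theory for quasi-minimizers of the perimeter under convexity constraint from \cite{LP23} yields at best $C^{1,1}$, and that this is sharp: there are two-dimensional counterexamples to any higher H\"older regularity. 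Consequently the selection principle delivers only $C^{1,\alpha}$ convergence of $\tilde K_j$ to $B$ for every $\alpha<1$, never $C^{2,\alpha}$.

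The paper's way around this is \emph{not} to inactivate the constraint, but to push the Fuglede computation itself down to $C^{1,\alpha}$ perturbations with $\alpha\in(\tfrac12,1)$ (Theorems~\ref{cor:min_smooth}, \ref{th:ITJc}, \ref{th:IClambda}). This requires controlling the remainder in the second-order expansion of $\lambda_1$ when the perturbation is only $C^{1,\alpha}$: the curvature terms $H_t,b_t$ appearing in $\lambda_1''(t)$ are then merely distributional, and one must handle them via $W^{-s,p'}\!\cdot W^{s,p}$ duality and product laws of the type $C^{0,\alpha}\cdot H^{1/2}\subset H^{1/2}$. This is the technical heart of the argument and is precisely what your proposal is missing. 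The existing $C^{2,\alpha}$ and $W^{2,p}$ Fuglede results from \cite{DP00,D02,N14,DL19} are explicitly noted in the paper to be insufficient for the convex selection principle.

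A minor additional point: your penalized problem is posed over all of $\K^N$ with $|K|=1$, but Blaschke selection needs a diameter bound; without a containing box the infimum is $-\infty$ (long thin convex bodies send $-c\lambda_1\to-\infty$ faster than $P\to+\infty$). The paper restricts to $K\subset D$ for a fixed box $D\Supset B$ and uses the Hausdorff convergence $\tilde K_j\to B$ to verify a posteriori that the box constraint is inactive.
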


Note that the novelty of this result comes from the first item (inequality \eqref{eq:goal_ineq}), as \eqref{eq:no_stab} was already obtained by \cite{N14} (see the second point below). We thus give an answer to the question of local minimality of the ball for the problem $P-c\lambda_1$ under convexity constraint for any value $c>0$ (except $c=c^*$). Let us place it among existing results in the literature. 
\begin{itemize}
\item First, in a weak form the stability of the ball for $P-\lambda_1$ (in the sense of \eqref{eq:goal_ineq}) was already known. It was first obtained in two dimensions by Payne and Weinberger in \cite{PW} for the larger class of simply connected domains. More precisely, the Payne-Weinberger inequality states that for any $\Om\subset\R^2$ open, simply connected with unit volume it holds
\begin{equation}\label{eq:PW}\lambda_1(\Om)-\lambda_1(B)\leq\lambda_1(B)\left(J_1(j_{01})^{-2}-1\right)\left(\frac{P(\Om)^2}{4\pi}-1\right)\end{equation}
where $J_1$ is the Bessel function of the first kind of order one, and $j_{01}$ is the first zero of the Bessel function of the first kind and of order zero. While this inequality is much more general since it gives a control of the Faber-Krahn deficit by the isoperimetric deficit for any simple connected set, it implies in particular stability of the ball among simply connected sets $\Om$ for which $P(\Om)$ is bounded from above. One can in fact derive the inequality 
\[\forall \Om\subset\R^2 \text{ open and simply connected with }|\Om|=1,\ \left(P(\Om)-P(B)\right)\geq \frac{\eps_2}{P(B)+C}\left(\lambda_1(\Om)-\lambda_1(B)\right)\] where $\eps_2:=4\pi\lambda_1(B)^{-1}(J_1(j_{01})^{-2}-1)^{-1}$, provided $P(\Om)\leq C$. Note that letting $C\to P(B)$ the constant $\eps_2/(P(B)+C)$ becomes 
\[\eps_2(2P(B))^{-1}=\left(\sqrt{\pi}j_{01}^2(J_1(j_{01})^{-2}-1)\right)^{-1}\approx 0.036,\] while the optimal constant given by \eqref{eq:defc*} equals 
\[c^*=\frac{3}{\sqrt{\pi}j_{01}^2(j_{01}^2-2)}\approx0.077.\]

On the other hand, a Payne-Weinberger type inequality for convex sets was proven in any dimensions $N\geq2$ by Brandolini, Nitsch and Trombetti \cite[Theorem 1.1]{BNT} using the Brunn-Minkowski theory. They prove that for any open convex set $\Om\subset\R^N$ it holds
\[\frac{\lambda_1(\Om)-\lambda_1(B^*)}{\lambda_1(\Om)}\leq C_N\left(\frac{P(\Om)^{\frac{N}{N-1}}-P(B)^{\frac{N}{N-1}}}{P(\Om)^{\frac{N}{N-1}}}\right)\]
for some explicit constant $C_N>0$, where $B^*$ is a ball with same perimeter as $\Om$. This again implies a non-optimal local stability of the ball for convex sets in the sense given by \eqref{eq:goal_ineq}. 
 \item Second, in \cite[Theorem 1.2]{N14} (see also \cite[Proposition 5.5 (ii)]{DL19}) Nitsch conjectured the optimal value $c^*>0$ by proving that (i) if $c<c^*$, the ball $B$ is a minimizer of $P-c\lambda_1$ among unit volume perturbations of the ball by a smooth vector field $\xi$ \textit{i.e.} for  $B_\xi:=(\text{Id}+\xi)(B)$ with $\|\xi\|_{C^\infty}\ll 1$ and (ii) if $c>c^*$, there exists $\|\xi_j\|_{C^\infty}\to0$ with $(P-c\lambda_1)(B_{\xi_j})<(P-c\lambda_1)(B)$. 
\item Finally, one can interpret this result in the context of Blaschke-Santalo diagrams. In fact, it provides the exact value of the tangent at $(x_0,y_0):=(P(B),\lambda_1(B))$ of the upper boundary of the diagram for $(P,\lambda_1,|\cdot|)$ in the class of planar convex sets, that is of the set
\begin{equation}\label{eq:diag_BS_stab}
\mathcal{D}:=\left\{(x,y)\in\R^2,\ \exists K\in\K^2,\ P(K)=x,\ \lambda_1(K)=y,\ |K|=1\right\}\end{equation}
It was proven in \cite{FL21Bla} that this diagram lies between two continuous increasing functions, meaning that 
\[\mathcal{D}=\left\{(x,y)\in[x_0,+\infty[\times\R^+,\ f(x)\leq y\leq g(x)\right\}\] for some continuous increasing $f,g:[x_0,+\infty)\to\R^+$. 
Relying on non minimality of the ball for $c>c^*$, the authors proved that $\limsup_{x\to x_0} \frac{g(x)-g(x_0)}{x-x_0}\geq \frac{1}{c^*}$ (see \cite[Corollary 3.17]{FL21Bla}). On the other hand, minimality for any $c<c^*$ from Theorem \ref{th:mainthm} ensures that the reverse inequality holds, so that the function $g$ admits a tangent at $(x_0,y_0)$ with coefficient $(c^*)^{-1}$. The precise result is thus the following.

\begin{corollary}\label{cor:BS} Let $B\subset\R^2$ be a ball of unit volume and set $(x_0,y_0):=(P(B),\lambda_1(B))$. Let $c^*$ be given by \eqref{eq:defc*}. Then the upper boundary of the diagram \eqref{eq:diag_BS_stab} admits a tangent at $(x_0,y_0)$ with coefficient $(c^*)^{-1}$, i.e. $g'(x_0)=(c^*)^{-1}$.\end{corollary}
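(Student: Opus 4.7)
The plan is to combine Theorem~\ref{th:mainthm} with the lower bound $\limsup_{x \to x_0^+} (g(x)-g(x_0))/(x-x_0) \geq (c^*)^{-1}$ already established in \cite[Corollary 3.17]{FL21Bla}, so that it suffices to prove the matching upper bound
\[\limsup_{x \to x_0^+} \frac{g(x)-g(x_0)}{x-x_0} \leq \frac{1}{c^*}.\]
Combined with the lower bound, this forces the slopes to agree and yields the existence of a tangent at $(x_0,y_0)$ of coefficient $(c^*)^{-1}$.

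To prove the upper bound I would fix any $c \in (0,c^*)$ and apply Theorem~\ref{th:mainthm} to obtain $\delta_c > 0$ such that every $K \in \K^2$ with $|K|=1$ and $|K \Delta B| \leq \delta_c$ satisfies $\lambda_1(K)-\lambda_1(B) \leq c^{-1}\bigl(P(K)-P(B)\bigr)$. I would then upgrade this $L^1$-closeness hypothesis to a perimeter-closeness hypothesis: by Bonnesen's inequality (equivalently, the planar quantitative isoperimetric inequality restricted to convex sets), there exists $\eta_c > 0$ such that any unit-volume planar convex body $K$ with $P(K)-P(B) \leq \eta_c$ lies at $L^1$-distance at most $\delta_c$ from some translate of $B$; since $P$ and $\lambda_1$ are translation-invariant, the stability inequality then applies to $K$ itself. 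Using that the supremum defining $g(x)$ is attained (which follows from compactness of unit-volume convex bodies with bounded perimeter modulo translations, together with continuity of $P$ and $\lambda_1$ on this class), for each $x \in (x_0, x_0+\eta_c)$ I would select a convex body $K_x$ with $P(K_x)=x$ and $\lambda_1(K_x)=g(x)$ and get
\[g(x) - g(x_0) = \lambda_1(K_x) - \lambda_1(B) \leq \frac{1}{c}\bigl(P(K_x)-P(B)\bigr) = \frac{1}{c}(x-x_0).\]
Hence $\limsup_{x\to x_0^+}(g(x)-g(x_0))/(x-x_0) \leq 1/c$, and letting $c \nearrow c^*$ completes the proof.

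The main (mild) obstacle is the compatibility step: the competitor $K_x$ realizing $g(x)$ must be close to $B$ in $L^1$ for Theorem~\ref{th:mainthm} to apply, and the conversion from perimeter deficit to $L^1$ deficit is precisely what the planar quantitative isoperimetric inequality for convex sets provides. If attainment of the supremum defining $g(x)$ were in doubt, one would simply work with near-maximizers and let the approximation error tend to zero, the argument being otherwise identical. Beyond this bookkeeping, no input is needed other than Theorem~\ref{th:mainthm} for $c<c^*$ combined with the opposite inequality from \cite{FL21Bla}.
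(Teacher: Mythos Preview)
Your proposal is correct and follows exactly the approach sketched in the paper's discussion preceding the corollary: combine the lower bound $\limsup_{x\to x_0}\frac{g(x)-g(x_0)}{x-x_0}\geq (c^*)^{-1}$ from \cite[Corollary 3.17]{FL21Bla} with the upper bound coming from Theorem~\ref{th:mainthm}. You simply make explicit the two natural bookkeeping steps the paper leaves implicit, namely passing from small perimeter deficit to small $L^1$ deficit (for which one could also invoke \cite[Lemma 3.3]{Fus15} as done elsewhere in the paper) and working with a maximizer realizing $g(x)$.
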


\end{itemize}
Although the convexity constraint is a natural class for proving this strong form of stability of the ball, our result opens up the question as to finding the more general class for which this could hold. Since a weak form of stability holds in two dimensions for perturbations of the ball which are simply connected (by the Payne-Weinberger inequality), we believe that it would be interesting to investigate whether the sharp stability we obtained could be extended to this class. As shown in \cite[Remark 6.2]{DL19}, one cannot hope for such a general class when $N\geq3$, but one can however make the same conjecture for the class of Lipschitz perturbations of the ball (see also further (ii) from Proposition \ref{prop:lambda1_stab}).

\subsection{Strategy of proof}\label{sect:strat_stabbb}

Although the two results are independent, the strategy we employ for proving them follows a general scheme, which is recurrent in the literature and not specific to convexity (see \cite{DL19} for a detailed description). We rely on the following steps:
\begin{enumerate}
\item \textit{Fuglede-type computations:} minimality of the ball for the functional among ``smooth" perturbations of the ball (Theorems \ref{th:cap_Fug_stab} and \ref{cor:min_smooth}).
\item Local minimality of the ball for convex sets (Theorems \ref{th:stab_cap} and \ref{th:mainthm}).
\end{enumerate}

The first step of this strategy refers to the seminal work \cite{F89} of  B. Fuglede, where the author obtained it for the perimeter functional. By ``smooth" perturbations of the ball $B$ in the first step we mean that minimality holds for domains $\Om=(\text{Id}+\xi)(B)$ with $\xi$ lying in some normed space of smoothness $X$ and $\|\xi\|_{X}$ is small enough. Since they are independent of convexity, the respective results Theorems \ref{th:cap_Fug_stab} and \ref{cor:min_smooth} constituting the first step bear interest in themselves. On the other hand, in the second step one studies the regularity of minimizers of the associated problem, aiming to prove that each minimizer $\Om$ can be written $\Om=(\text{Id}+\xi)(B)$ with $\xi\in X$ and $X$ is the space obtained in the first step. The two parts of the strategy are thus closely linked to each other through the choice of the space $X$. 

Let us now explain separately how we proceed for proving Theorems \ref{th:mainthm} and \ref{th:stab_cap}.

\subsubsection{Weak stability with Lipschitz regularity}
For proving Theorem \ref{th:stab_cap} we first perform \textit{Fuglede-type computations} for Lipschitz perturbations of the ball. This is done in Theorem \ref{th:cap_Fug_stab}.
It is an improvement of previous results from \cite[Corollary 5.6]{GNR15}, where the authors perform \textit{Fuglede-type computations} for a class of $C^{1,1}$ sets with curvature uniformly bounded from above. The proof of Theorem \ref{th:cap_Fug_stab} relies on a second-order estimate of the variation of the capacity for Lipschitz perturbations of the ball, shown in Lemma \ref{lem:cap}. To obtain this latter bound we take advantage of the fact that the capacity is defined as a minimum, thus enabling us to estimate it from above by providing a natural competitor, for which only low regularity is needed.

Theorem \ref{th:stab_cap} is then obtained from Theorem \ref{th:cap_Fug_stab} by using Lipschitz regularity of convex sets as in \cite{F89}. In reference to the two steps strategy described above, note that in this case the passage from the \textit{Fuglede-type computations} to local minimality of the ball in the class of convex sets is quite direct, due to the fact that we are able to perform these computations for a space $X$ with low regularity.

\subsubsection{Strong stability with a $C^{1,\alpha}$ regularity theory}
Since it was proven in \cite[Theorem 1.2]{N14} that minimality in \eqref{eq:goal_ineq} holds among smooth perturbations of the ball for any $c\in(0,c^*)$, the idea of Theorem \ref{th:mainthm} is to pass from smooth to non-smooth convex perturbations of the ball in the minimality. The strategy we will use is the so-called \textit{selection principle}, which was first introduced by Cicalese and Leonardi in \cite{CL12} as a means to give a new proof to the sharp quantitative isoperimetric inequality. The robustness of their method allowed it to be employed in many other contexts for proving various inequalities for shapes, among which we can quote the \textit{sharp quantitative Faber-Krahn inequality} proven in \cite{BdPV15}. The strategy consists in a refinement of the two steps method described above. Roughly speaking, if one wants to prove local minimality of the ball of unit volume $B$ for a functional $\mathcal{J}$ among a class $\mathcal{A}$ of shapes, the idea is to reduce the proof of the inequality in $\mathcal{A}$ to the inequality in a class of smooth shapes through a regularizing procedure. This is usually based on a regularity theory related to the functional $\mathcal{J}$ under study.

In order to apply this \textit{selection principle} method we first need to prove \textit{Fuglede-type computations} for the functional $P-c\lambda_1$ for $C^{1,\alpha}$ perturbations of the ball (in Theorem \ref{cor:min_smooth}), which are not contained in the available results in the literature (in \cite{N14,DL19,D02,DP00}; see Section \ref{sect:IT} for a justification). On the other hand, to perform the second step of the strategy we prove a convergence result for quasi-minimizers of the perimeter under convexity constraint (Corollary \ref{thm:cor_LP}), which uses the regularity theory from \cite{LP23}. 

Note that the proof of Theorem \ref{th:mainthm} is much more involved than the proof of Theorem \ref{th:stab_cap}. This is related to the fact that it relies on a regularity theory among convex shapes, but is also because in order to prove the \textit{Fuglede-type computations} we are led to perform very technical computations (see the proof of Theorem \ref{th:IClambda}).

\subsection{Plan of the paper}
Section \ref{sect:cap} is dedicated to the proof of Theorem \ref{th:stab_cap}.
Sections \ref{sect:IT} and \ref{sect:select} are independent of this first section, and deal with proving Theorem \ref{th:mainthm}: in Section \ref{sect:IT} we show the first step of the \textit{selection principle} method by proving minimality of the ball for $P-c\lambda_1$ in a $C^{1,\alpha}$ neighborhood; then, in Section \ref{sect:select} we perform the regularizing procedure in itself. We provide a small appendix in Section \ref{sect:app}.\\

\noindent\textbf{Acknowledgements: }
The author is deeply grateful to J. Lamboley for very helpful discussions and careful readings of previous versions of this document. The author also thanks M. Goldman, M. Novaga and B. Ruffini for valuable discussions about this work. The author warmly thanks R. Petit for interesting discussions about convergence of smooth sets. Finally, the author wishes to thank the referees for their precious readings and comments. This work was partially supported by the project ANR-18-CE40-0013 SHAPO financed by the French Agence Nationale de la Recherche (ANR).

\section{Stability of the ball for an isoperimetric problem with convexity constraint involving capacity. Proof of Theorem \ref{th:stab_cap}}\label{sect:cap}

 In this section we prove Theorem \ref{th:stab_cap}, which is the stability result associated to \eqref{eq:pb_cap1st}. Recall that we have defined in \eqref{eq:capN=3} the capacity functional in dimensions $N\geq3$. 
It is proven in \cite[Theorem 2 and 3]{CoSa} that provided $K$ is sufficiently smooth, the minimization in \eqref{eq:capN=3} is uniquely solved by the so-called \textit{capacitary function} $u_K$ which verifies
\begin{equation}\label{eq:cap_funct}\begin{cases}-\Delta u_K=0 \text{ in } \R^N\setminus K\\ u_K=1 \text{ over }  K,\ u_K\in C^0(\R^N)\\ u_K(x)\rightarrow0 \text{ as } |x|\rightarrow+\infty\end{cases}\end{equation}
The number $\text{Cap}(K)$ also appears in the asymptotic expansion of $u_K$: 
\[ u_K(x)\sim|x|^{2-N}\left(\text{Cap}(K)\sigma_N(N-2)^{-(N-1)/(N-2)}\right)\ \text{ as }|x|\rightarrow+\infty\]
 with $\sigma_N$ denoting the $(N-1)$-dimensional measure of the unit sphere in $\R^N$. Finally, let us also note that $\text{Cap}(K)=(I_{N-2}(K))^{-1}$ where $I_{N-2}$ is the Riesz potential energy which is given by 
\[I_{N-2}(K):=\inf\left\{\iint_{\R^N\times\R^N}|x-y|^{2-N}d\mu(x)d\mu(y),\ \mu\in \mathcal{P}(K)\right\}\]
with $\mathcal{P}(K)$ denoting the set of probabilities supported on $K$ (see \cite[Remark 2.5]{GNR15}).

When $N=2$, 
one must proceed differently to define the capacity (see further Remark \ref{rk:2dcap}).
 Stability of the ball among convex sets in two dimensions was obtained in \cite[Corollary 1.3]{GNR18} using the two steps strategy we described in the Introduction, by proving (i) \textit{Fugldede-type computations} in a certain class of "smooth" perturbations and (ii) regularity of minimizers of \eqref{eq:pb_cap_N=2}. Theorem \ref{th:stab_cap} is the $N\geq3$ version of this result. Instead, here we prove (i) for Lipschitz perturbations (in Theorem \ref{th:cap_Fug_stab} below), which will be enough in order to obtain local minimality of the ball for convex sets without having to prove regularity of the minimizers. The result is as follows.
 
\begin{theorem}[Fuglede-type computations for $P+\eps\text{Cap}^{-1}$: minimality for Lipschitz perturbations]\label{th:cap_Fug_stab}
Let $N\geq3$, and let $B$ denote the centered unit ball. For $h\in W^{1,\infty}(\partial B)$ we denote $B_h:=\{tx(1+h(x)),\ t\in[0,1),\ x\in\partial B\}$. There exists $\eta>0$ and $\eps_0>0$ such that for all $h\in W^{1,\infty}(\partial B)$ verifying $\|h\|_{W^{1,\infty}(\partial B)}\leq\eta$ with $|B_h|=|B|$ and such that $B_h$ has barycenter at the origin, and for all $\eps\in(0,\eps_0)$, then
\[P(B_h)+\eps\emph{Cap}(B_h)^{-1} \geq P(B)+\eps\emph{Cap}(B)^{-1} \]
with equality only if $B_h=B$.
\end{theorem}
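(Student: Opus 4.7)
My plan is to combine Fuglede's classical second-order lower bound on $P(B_h)-P(B)$ with the second-order upper bound on $\text{Cap}(B_h)-\text{Cap}(B)$ announced as Lemma \ref{lem:cap}, and then absorb the capacity term by taking $\eps$ sufficiently small. Concretely, I aim to establish bounds of the form
\begin{equation*}
P(B_h)-P(B)\ \geq\ c_1\,\|h\|_{H^1(\partial B)}^2,\qquad \text{Cap}(B)^{-1}-\text{Cap}(B_h)^{-1}\ \leq\ c_3\,\|h\|_{H^1(\partial B)}^2,
\end{equation*}
with constants $c_1,c_3>0$ depending only on $N$. Subtracting these gives
\begin{equation*}
\bigl(P+\eps\,\text{Cap}^{-1}\bigr)(B_h)-\bigl(P+\eps\,\text{Cap}^{-1}\bigr)(B)\ \geq\ (c_1-\eps c_3)\,\|h\|_{H^1(\partial B)}^2,
\end{equation*}
which is nonnegative for $\eps\leq\eps_0:=c_1/c_3$, with strict inequality whenever $\|h\|_{H^1(\partial B)}>0$.

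For the perimeter bound I would follow Fuglede \cite{F89}: expanding $P(B_h)$ to second order in $h$ and decomposing $h=\sum_k a_k Y_k$ in spherical harmonics of $\partial B$, the second variation takes the form $\tfrac12\sum_k (k(k+N-2)-(N-1))a_k^2$. The volume constraint $|B_h|=|B|$ kills the mode $k=0$ and the barycenter constraint on $B_h$ kills the modes $k=1$, both up to quadratic errors controlled by $\|h\|_{W^{1,\infty}}\|h\|_{H^1}^2$. On the orthogonal of these first two spherical harmonics the quadratic form is coercive, since $k(k+N-2)-(N-1)>0$ for $k\geq 2$, and the cubic remainders are absorbed when $\eta$ is small. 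For the capacity bound I would invoke Lemma \ref{lem:cap}, which gives $\text{Cap}(B_h)\leq \text{Cap}(B)+c_2\|h\|_{H^1(\partial B)}^2$ for $\|h\|_{W^{1,\infty}}\leq\eta$, essentially because the capacity is defined as an infimum and one can produce an explicit Lipschitz competitor by transporting the capacitary function $u_B$ via a Lipschitz diffeomorphism $\R^N\to\R^N$ mapping $B$ to $B_h$ and equal to the identity outside a large ball. Since $\text{Cap}(B_h)$ stays in a compact interval around $\text{Cap}(B)>0$ and $x\mapsto x^{-1}$ is Lipschitz there, inverting yields the second inequality above (whose left-hand side is nonnegative by the isocapacitary inequality).

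With these two ingredients in hand, the combination presented in the first paragraph finishes the proof. For the equality case, strict inequality in Step 1 forces $\|h\|_{H^1(\partial B)}=0$ in the equality regime, and therefore $h\equiv 0$ (since $h$ is Lipschitz, in particular continuous), so $B_h=B$. I expect the main obstacle of the whole argument to lie in Lemma \ref{lem:cap}, i.e.\ in designing the right Lipschitz competitor and carefully expanding its Dirichlet energy to second order while keeping only $W^{1,\infty}$ regularity on $h$: this is precisely what upgrades the $C^{1,1}$ Fuglede-type statement of \cite{GNR15} to a Lipschitz one, and in turn makes the deduction of Theorem \ref{th:stab_cap} from Theorem \ref{th:cap_Fug_stab} a direct consequence of the Lipschitz regularity of convex bodies.
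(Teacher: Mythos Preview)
Your proposal is correct and follows essentially the same route as the paper: both combine Fuglede's quadratic lower bound $P(B_h)-P(B)\geq c\|\nabla_\tau h\|_{L^2}^2$ (your spherical-harmonic argument is exactly what underlies the cited \cite[Theorem 3.1]{Fus15}) with the upper bound from Lemma \ref{lem:cap}, convert $\|h\|_{H^1}^2$ back to $\|\nabla_\tau h\|_{L^2}^2$ via the Poincar\'e inequality coming from the volume/barycenter constraints, and absorb the capacity term by taking $\eps_0$ small. The only cosmetic difference is that the paper bounds the denominator $\text{Cap}(B)\text{Cap}(B_h)$ from below using the isocapacitary inequality rather than a generic compactness argument.
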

 
 The proof of Theorem \ref{th:cap_Fug_stab} importantly relies on Lemma \ref{lem:cap} below, which consists in a weak \textbf{(IT)} property (see the statement of Theorem \ref{th:ITJc} for a strong \textbf{(IT)} property).

\subsection{Weak (IT) property}  For $h\in W^{1,\infty}(\partial B)$ with $\|h\|_{L^\infty(\partial B)}\leq1/2$ we set $B_h$ the Lipschitz open set 
\[B_h:=\{tx(1+h(x)),\ t\in[0,1),\ x\in\partial B\}.\]

In the following Lemma we estimate from above the variation of $\text{Cap}$ for a Lipschitz perturbation $B_h$ of $B$ in terms of the $H^1$ norm of $h$. Since the Lemma does not use the convexity of the sets $B_h$ it is stated for general $h\in W^{1,\infty}(\partial B)$.

\begin{lemma}[Weak \textbf{(IT)}$_{H^1,W^{1,\infty}}$]\label{lem:cap}
Let $N\geq3$. There exists $C_N>0$ such that if $h\in W^{1,\infty}(\partial B)$ with $B_h$ of volume $|B_h|=|B|$ and $\|h\|_{L^{\infty}(\partial B)}\leq1/2$ then
\[\emph{Cap}(B_h)-\emph{Cap}(B)\leq C_N\|h\|^2_{H^1(\partial B)}.\]
\end{lemma}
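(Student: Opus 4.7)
The plan is to exploit the variational (infimum) definition of $\text{Cap}$ by providing an explicit competitor. I construct this competitor by transporting the capacitary function $u_B$ of $B$ through a bi-Lipschitz map $\Phi:\R^N\to\R^N$ sending $B$ to $B_h$, then bound the resulting Dirichlet energy via a second-order expansion in $h$. Because the bound need only be an upper bound, the map $\Phi$ only has to be bi-Lipschitz, which is compatible with the low regularity of $h$.

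First, I fix a smooth radial cutoff $\psi:[0,\infty)\to[0,1]$ with $\psi\equiv 1$ on $[0,1]$, compactly supported in $[0,R]$ and satisfying $\sup_r r|\psi'(r)|\leq 1/2$ (this is possible for $R$ large enough since $\int_1^R \psi'\,dr=-1$ may be spread over a large interval). Setting $H(x):=\psi(|x|)h(x/|x|)$, $\Phi(x):=(1+H(x))x$, and $\dot H:=x\cdot\nabla H=r\psi'(r)h(x/|x|)$, the hypothesis $\|h\|_{L^\infty}\leq 1/2$ yields $|H|\leq 1/2$ and $|\dot H|\leq 1/4$, hence $\frac{d}{dr}[r(1+\psi(r)h(y))]=1+(\psi+r\psi')h\geq 1/4>0$ for every direction $y$. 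Thus $\Phi$ is a bi-Lipschitz homeomorphism of $\R^N$, equal to the identity outside $B(0,R)$ and sending $B$ onto $B_h$. I then take $u_B(x):=\min(|x|^{2-N},1)$, the capacitary function of $B$, and define $v:=u_B\circ\Phi^{-1}$. This $v$ is Lipschitz, equals $1$ on $B_h$, tends to $0$ at infinity with $|\nabla v|\in L^2(\R^N)$, so it is admissible in the variational principle for $\text{Cap}(B_h)$, giving $\text{Cap}(B_h)\leq\int_{\R^N}|\nabla v|^2\,dy$.

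Next, the change of variables $y=\Phi(x)$, combined with $D\Phi=(1+H)I+x\otimes\nabla H$, the matrix determinant lemma, and the Sherman--Morrison formula, yields (exploiting that $\nabla u_B$ is radial)
\[|\nabla v|^2 \det D\Phi = |\nabla u_B|^2 (1+H)^{N-3}\Bigl[(1+H)-\dot H+\frac{|x|^2|\nabla H|^2}{1+H+\dot H}\Bigr].\]
On $\mathrm{supp}\,\psi$, $(1+H)^{\pm 1}$ and $(1+H+\dot H)^{\pm 1}$ are bounded by the preceding paragraph. Extracting the linear part in $(H,\dot H)$ and integrating by parts in $r$ will give the cutoff-independent identity
\[\int_{\R^N\setminus B}|\nabla u_B|^2 [(N-2)H-\dot H]\,dx = (N-2)^2\int_{\partial B}h\,d\sigma,\]
which matches the Hadamard shape derivative of $\text{Cap}$ at $B$. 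The volume constraint $|B_h|=|B|$ reads $\int_{\partial B}[(1+h)^N-1]\,d\sigma=0$, i.e. $\int h\,d\sigma=-\sum_{k=2}^N k^{-1}\binom{N-1}{k-1}\int h^k\,d\sigma$, whence $|\int h\,d\sigma|\leq C_N\int h^2\,d\sigma$ using $\|h\|_{L^\infty}\leq 1/2$. Consequently the linear contribution is already bounded by $C_N\|h\|_{L^2(\partial B)}^2$.

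All remaining terms are at least quadratic in $(H,\dot H,\nabla H)$ and can be controlled by $\int|\nabla u_B|^2(H^2+\dot H^2+|x|^2|\nabla H|^2)\,dx$. The orthogonal decomposition $\nabla H=\psi'h\,e_r+\psi|x|^{-1}\nabla_\tau h$ gives $|x|^2|\nabla H|^2=r^2(\psi')^2h^2+\psi^2|\nabla_\tau h|^2$, so these integrals factor as explicit finite radial weights on $[1,R]$ (involving $r^{1-N}\psi^2$, $r^{3-N}(\psi')^2$, etc., all integrable because $\psi$ is compactly supported) times either $\int_{\partial B}h^2\,d\sigma$ or $\int_{\partial B}|\nabla_\tau h|^2\,d\sigma$. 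Summing the linear and quadratic estimates yields the claim. The main delicate point will be the design of $\psi$ ensuring that $\Phi$ is a bi-Lipschitz homeomorphism using only the $L^\infty$ bound on $h$ (with no control on $\nabla h$), and the verification that the would-be first-variation term is precisely absorbed by the volume constraint; once this is in place, the rest is a standard second-order expansion.
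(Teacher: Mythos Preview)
Your proposal is correct and follows essentially the paper's approach: transport $u_B$ through a bi-Lipschitz radial map to build a competitor for $\text{Cap}(B_h)$, expand the resulting Dirichlet energy to second order, and use the volume constraint $\int_{\partial B}(1+h)^N=\int_{\partial B}1$ to absorb the first-order term into $C_N\|h\|_{L^2}^2$. The only difference is the extension of $h$ off $\partial B$: the paper takes $h$ $0$-homogeneous and uses $\phi_h(x)=x/(1+h(x/|x|))$, which yields the clean identity $\int|\nabla(u_B\circ\phi_h)|^2=\int(1+h)^{N-2}|\nabla u_B|^2+\text{(tangential gradient term)}$ at the cost of a separate cutoff-at-infinity argument for admissibility, whereas you localize with a radial cutoff $\psi$, making admissibility immediate but forcing an integration by parts in $r$ to see that the first-variation term is cutoff-independent and equals $(N-2)^2\int_{\partial B}h$.
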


\begin{proof}[Proof of Lemma \ref{lem:cap}]
 Fix $h\in W^{1,\infty}(\partial B)$ with $\|h\|_{L^\infty(\partial B)}\leq1/2$ and $|B_h|=|B|$. We extend $h$ over $\R^N\setminus \{0\}$ by setting $h(x):=h(x/|x|)$, thus getting a $0$-homogenous function $h:\R^N\to\R$. Let then $\phi_h$ be the Lipschitz homeomorphism 
\begin{eqnarray}\nonumber\phi_h:&\R^N&\longrightarrow \R^N\\&x&\longmapsto \frac{x}{(1+h(x))}.\end{eqnarray}
Let us denote more simply $\widetilde{h}:=(1+h)^{-1}$, so that $\phi_h(x)= \widetilde{h}(x)x$. We have $D\phi_h(x)=\widetilde{h}(x)\text{Id}+x\otimes\nabla\widetilde{h}(x)$, so that using the formula $\text{det}(\text{Id}+a\otimes b)=1+a\cdot b$ for vectors $a,b$ and since $\nabla \widetilde{h}(x)\cdot x=0$ ($\widetilde{h}$ being constant on half-lines $\{\lambda x,\ \lambda\geq0\}$) it holds $\text{det}(D\phi_h)= \widetilde{h}^N$. As a consequence, thanks to the change of variable $y=\phi_h(x)$ and using polar coordinates, the hypothesis $|B_h|=|B|$ reads
\begin{equation}\label{eq:volh}\int_{\partial B}(1+h)^N=\int_{\partial B}1\end{equation}
Expanding $(1+h)^N-1=Nh+\sum_{i=2}^N\binom{N}{i}h^i$ we thus get that there exists $C_N>0$ such that
\begin{equation}\label{eq:h_secondorder}\text{if } \|h\|_{L^\infty(\partial B)}\leq1/2,\  \int_{\partial B}h\leq C_N\|h\|^2_{L^2(\partial B)}\end{equation}

We set $u_B(x):=\min\{1,|x|^{2-N}\}$ the capacitary function for $B$, which is the unique solution to \eqref{eq:cap_funct} for $K=B$.

\textbf{Step 1\footnote{This argument for admissibility of $u_B\circ\phi_h$ was suggested to us by M. Goldman, M. Novaga and B. Ruffini.}}: $\text{Cap}(B_h)\leq \int_{\R^N}|\nabla (u_B\circ\phi_h)|^2$. For $R\geq\frac{3}{2}$ (note that we thus have $B_h\subset B_R(0)$) let $\theta_R:\R^N\rightarrow\R$ be some cut-off function such that 
\[\begin{cases} \theta_R\equiv 1 \text{ on } B_R(0),\ \theta_R\equiv0 \text{ on } \R^N\setminus B_{2R}(0)\\
\|\theta_R\|_{L^\infty(\R^N)}\leq 1,\ \|\nabla\theta_R\|_{L^\infty(\R^N)}\leq \frac{1}{R}\end{cases}\]
Then $v_R(x):=\theta_R(x)(u_B\circ\phi_h)(x)\in W^{1,\infty}(\R^N)$ and has compact support, so that by standard mollification $v_R$ can be approached in $H^1$ norm by $C^{\infty}_c(\R^N)$ functions. As a consequence by \eqref{eq:capN=3} we get 
\begin{equation}\label{eq:vR_ad}\text{Cap}(B_h)\leq \int_{\R^N}|\nabla v_R|^2\end{equation} We now verify that $\nabla v_R\rightarrow \nabla (u_B\circ\phi_h)$ in $L^2(\R^N)$ as $R\rightarrow+\infty$. 

We have 
\begin{align}\nonumber\|\nabla v_R-\nabla (u_B\circ\phi_h)\|_{L^2(\R^N)}&=\|\nabla v_R-\nabla (u_B\circ\phi_h)\|_{L^2(\R^N\setminus B_R)}\\ \nonumber
& \leq\|(\theta_R-1)\nabla (u_B\circ\phi_h)\|_{L^2(\R^N\setminus B_R)}+\|\nabla\theta_R\cdot(u_B\circ\phi_h)\|_{L^2(\R^N\setminus B_R)}
\\&\leq\|\nabla (u_B\circ\phi_h)\|_{L^2(\R^N\setminus B_R)}+\|\nabla\theta_R\cdot(u_B\circ\phi_h)\|_{L^2(\R^N\setminus B_R)}\label{eq:vR_compet}\end{align}
Since $\nabla (u_B\circ\phi_h)=(D\phi_h)^T\nabla u_B\circ\phi_h$ with $\phi_h\in W^{1,\infty}(\R^N)$ and $\nabla u_B\in L^2(\R^N)$, it holds 
\[\int_{\R^N}|\nabla (u_B\circ\phi_h)|^2\leq\|D\phi_h\|_{L^\infty(\R^N)}^2\|\text{det}(D\phi_h)\|_{L^\infty(\R^N)}^{-1}\int_{\R^N}|\nabla u_B|^2<\infty\] Hence $\|\nabla (u_B\circ\phi_h)\|_{L^2(\R^N\setminus B_R)}\rightarrow0$. On the other hand, as $\|1+h\|_{L^\infty(\R^N)}\leq 3/2$ we have for $|x|\geq3/2$,

\[|u_B\circ\phi_h(x)|=|\phi_h(x)|^{2-N}\leq \left(\frac{3}{2}\right)^{N-2}|x|^{2-N}\]
Since $|\nabla\theta_R|\leq R^{-1}$, using polar coordinates this gives for $R\geq3/2$
\begin{align*}\int_{B_{2R}\setminus B_R}|\nabla\theta_R\cdot(u_B\circ\phi_h)|^2&\leq P(B)\left(\frac{3}{2}\right)^{N-2}R^{-2}\int_{R}^{2R}r^{2(2-N)}r^{N-1}dr\\
&=P(B)\left(\frac{3}{2}\right)^{N-2}\begin{cases}(4-N)^{-1}R^{-2}((2R)^{4-N}-R^{4-N}) \text{ if } N\neq4\\
R^{-2}\log(2)\text{ if }N=4 \end{cases}
\end{align*}

In any case we thus get 
\[\int_{B_{2R}\setminus B_R}|\nabla\theta_R\cdot(u_B\circ\phi_h)|^2\rightarrow0 \text{ as } R\rightarrow+\infty.\]
Pluging this into \eqref{eq:vR_compet} we deduce in fact
\[\|\nabla v_R-\nabla (u_B\circ\phi_h)\|_{L^2(\R^N)}\rightarrow0,\text{ as } R\rightarrow+\infty\] 
so that
\[\text{Cap}(B_h)\leq \int_{\R^N}|\nabla (u_B\circ\phi_h)|^2\]
thanks to \eqref{eq:vR_ad}.

\textbf{Step 2: Estimate of the energy of $u_B\circ\phi_h$.}
Recall that
\begin{equation}\label{eq:Dphi}\forall x\in \R^N,\ D\phi_h(x)=\widetilde{h}(x)\text{Id}+x\otimes\nabla\widetilde{h}(x)\end{equation}
so that
\begin{align}\nonumber\left|\nabla(u_B\circ\phi_h)\right|^2&=\left|(D\phi_h)^T\cdot\nabla u_B\circ\phi_h\right|^2\\\nonumber&=\left|\widetilde{h}\cdot(\nabla u_B\circ\phi_h)+\nabla \widetilde{h}\left((\nabla u_B\circ\phi_h)\cdot x\right))\right|^2 \end{align}Since $u_B$ is radial, 
the vector $\nabla u_B(\phi_h(x))$ is thus proportionnal to $\phi_h(x)$ and hence to $x$, which yields $\nabla u_B(\phi_h(x))\cdot \nabla \widetilde{h}(x)=0$ (because $\widetilde{h}$ is constant in the direction $x$). We deduce 
\[\left|\nabla(u_B\circ\phi_h)\right|^2=\widetilde{h}^2|\nabla u_B\circ\phi_h|^2+ ((\nabla u_B\circ\phi_h)\cdot x)^2|\nabla \widetilde{h}|^2.\] 
We can therefore write the energy of $u_B\circ\phi_h$ as follows
\begin{equation}\label{eq:energy_ubcircphi}\int_{\R^N}|\nabla(u_B\circ\phi_h)|^2=\int_{\R^N}\widetilde{h}^2|\nabla u_B\circ\phi_h|^2+\int_{\R^N} ((\nabla u_B\circ\phi_h)\cdot x)^2|\nabla \widetilde{h}|^2.\end{equation}

Let us first deal with the term $\int_{\R^N}\widetilde{h}^2|\nabla u_B(\phi_h)|^2$. Recalling that $\text{det}(D\phi_h)= \widetilde{h}^N$ we get by the change of variable $y=\phi_h(x)$ \begin{align}\nonumber\int_{\R^N}\widetilde{h}^2|\nabla u_B\circ\phi_h|^2&=\int_{\R^N}(1+h)^{-2}|\nabla u_B\circ\phi_h|^2\\\nonumber&=\int_{\R^N}(1+h)^{N-2}|\nabla u_B|^2\\\nonumber&= \int_{\R^N}|\nabla u_B|^2+\sum_{i=1}^{N-2}\binom{N-2}{i}\int_{\R^N}h^i|\nabla u_B|^2\nonumber\\&= \text{Cap}(B)+\sum_{i=1}^{N-2}\binom{N-2}{i}\int_{\R^N}h^i|\nabla u_B|^2.\nonumber\end{align} Letting $v(r):=\min\{1,r^{2-N}\}$ be the function such that $v(|x|)=u_B(x)$ for all $x\in\R^N$, then using the co-area formula we have for $i\geq1$ \begin{align}\int_{\R^N}h^i|\nabla u_B|^2&=
\left(\int_0^\infty v'(r)^2r^{N-1}dr\right)\left(\int_{\partial B}h^{i}\right)\nonumber\\ &= a_{N}\int_{\partial B}h^{i}\nonumber\end{align} where we set $a_{N}:=\int_0^\infty v'(r)^2r^{N-1}dr=P(B)^{-1}\int_{\R^N}|\nabla u_B|^2$. For $i\geq2$, since $\|h\|_{L^{\infty}(\partial B)}\leq 1/2$ we thus get
\[\int_{\R^N}h^i|\nabla u_B|^2\leq C_N\|h\|_{L^2(\partial B)}^2,\]
while if $i=1$ thanks to \eqref{eq:h_secondorder} we have $\int_{\partial B}h\leq C_N\|h\|^2_{L^2(\partial B)}$ so that \[\int_{\R^N}h^i|\nabla u_B|^2\leq C_N'\|h\|_{L^2(\partial B)}^2\]
These two give
\begin{equation}\label{eq:energy_ub_1} \int_{\R^N}\widetilde{h}^2|\nabla u_B(\phi_h)|^2\leq \text{Cap}(B)+C_N\|h\|_{L^2(\partial B)}^2\end{equation} for some dimensional constant $C_N>0$.

We now turn to the estimate of $\int_{\R^N} ((\nabla u_B\circ\phi_h)\cdot x)^2|\nabla \widetilde{h}|^2$. Denoting by $a(x):=x/|x|$, then for $x\neq0$ \[\nabla \widetilde{h}(x)=-\widetilde{h}^2Da(x)^T\nabla_\tau h(a(x))\] where $\nabla_\tau$ is the tangential gradient. As each coefficient of $Da(x)$ is controlled by $2|x|^{-1}$, since $\|\widetilde{h}\|_{L^\infty(\partial B)}\leq 2$ this yields \begin{align}\nonumber((\nabla u_B\circ\phi_h)\cdot x)^2|\nabla \widetilde{h}(x)|^2\leq 8|\nabla u_B\circ\phi_h|^2|\nabla_\tau h(a(x))|^2\nonumber\end{align} Changing variables and using polar coordinates in the same fashion as before, this ensures that\begin{equation}\label{estim2}\int_{\R^N}((\nabla u_B\circ\phi_h)\cdot x)^2|\nabla \widetilde{h}|^2\leq C_N\|\nabla_\tau h\|^2_{L^2(\partial B)}\end{equation}
for some $C_N>0$.
Injecting this estimate together with \eqref{eq:energy_ub_1} into \eqref{eq:energy_ubcircphi} finally yields 
\[\int_{\R^N}|\nabla(u_B\circ\phi_h)|^2\leq \text{Cap}(B)+C_N\|h\|_{H^1(\partial B)}^2\] for some $C_N>0$.

\textbf{Conclusion:} Thanks to Step 1 it holds 
\[\text{Cap}(B_h)\leq \int_{\R^N}|\nabla (u_B\circ \phi_h)|^2\]
from which we get using Step 2
\[\text{Cap}(B_h)\leq \text{Cap}(B)+C_N\|h\|_{H^1(\partial B)}^2\]
This finishes the proof of the Lemma.
\end{proof}

\begin{remark}\label{rk:2dcap}
In two dimensions, one defines the capacity functional as follows. We first set the Robin constant $V_K$ of some $K\in\K^2$:
letting 
\begin{equation}\label{eq:capR_2d}\text{Cap}_R(K):=\inf\left\{\int_{\R^2}|\nabla u|^2,\ u\in H_0^1(B_R(0)),\ u\geq 1 \text{ over } K\right\},\end{equation}
 then we define $V_K:= \lim\left(2\pi\text{Cap}_R(K)^{-1}-\log(R)\right)$ as $R\rightarrow \infty$.
Note that one also has \begin{equation} V_K:=\inf\left\{-\iint_{K\times K}\log(|x-y|)d\mu(x)d\mu(y), \ \mu \in\mathcal{P}(K)\right\}\nonumber \end{equation} 
(see \cite[Theorem 4]{Bag67} and section 3 in \cite{GNR18}). The {\it logarithmic capacity} of $K$ is then given by 
\[\text{Cap}(K):=e^{-V_K}\]

Instead of \eqref{eq:pb_cap1st} we thus set the minimization problem
\begin{equation}\label{eq:pb_cap_N=2}\inf\left\{P(K)+\eps V_K,\ K\in\K^N,\ |K|=1\right\}\end{equation}
 where $\eps>0$ is a small parameter.

We tried to apply the same strategy in the case $N=2$ in order to retrieve the result from \cite{GNR18}. However, due to the specificity of the definition of capacity in the two dimensional case, this brings additional difficulties and we do not know whether the argument could work.
\end{remark}

\subsection{Proof of Theorems \ref{th:cap_Fug_stab} and \ref{th:stab_cap}}
Relying on Lemma \ref{lem:cap}, we first prove the \textit{Fuglede-type computations} for Lipschitz perturbations from Theorem \ref{th:cap_Fug_stab}.

\begin{proof}[Proof of Theorem \ref{th:cap_Fug_stab}]
Let $h\in W^{1,\infty}(\partial B)$ with $\|h\|_{L^\infty(\partial B)}\leq 1/2$, such that $|B_h|=|B|$ and $B_h$ has barycenter at the origin. It is proven in \cite[Theorem 3.1]{Fus15} that there exists $\eta>0$ such that 
\[\text{if } \|h\|_{W^{1,\infty}(\partial B)}\leq\eta,\ P(B_h)-P(B)\geq \frac{1}{4}\|\nabla_\tau h\|^2_{L^2(\partial B)}.\]
 Thanks to \eqref{eq:volh} there exists $\delta>0$ such that if $\|h\|_{L^\infty(\partial B)}\leq \delta$ one has the Poincaré type inequality (see also \cite[Proof of Theorem 3.1]{Fus15}) 
\[\|h\|_{L^2(\partial B)}^2\leq 2\|\nabla_\tau h\|_{L^2(\partial B)}^2.\]

Setting $\widetilde{\eta}:=\min\{\eta,\delta\}$, we thus deduce thanks to Lemma \ref{lem:cap} that if $\|h\|_{W^{1,\infty}(\partial B)}\leq \widetilde{\eta}$ then 
\begin{align*}\text{Cap}(B)^{-1}-\text{Cap}(B_h)^{-1}=\frac{\text{Cap}(B_h)-\text{Cap}(B)}{\text{Cap}(B)\text{Cap}(B_h)} &\leq \frac{C_N}{\text{Cap}(B)^2}\|h\|^2_{H^1(\partial B)} \\
& \leq\frac{3C_N}{\text{Cap}(B)^2}\|\nabla_\tau h\|^2_{L^2(\partial B)}\\
&\leq \frac{12C_N}{\text{Cap}(B)^2}\left(P(B_h)-P(B)\right)\end{align*}
where we also used the isocapacitary inequality in the first line. Taking $\eps_0:=\frac{\text{Cap}(B)^2}{12C_N}$ we get in fact
\[P(B_h)+\eps\text{Cap}(B_h)^{-1} \geq P(B)+\eps\text{Cap}(B)^{-1} ,\]
for any $\eps\in(0,\eps_0)$.
Furthermore, the equality case enforces $P(B_h)=P(B)$, so that $h=0$ by following the chain of inequalities. This concludes the proof of the Theorem.
\end{proof}

We are now ready to prove Theorem \ref{th:stab_cap}.

\begin{proof}[Proof of Theorem \ref{th:stab_cap}]
Let us first note that \eqref{eq:pb_cap1st} admits a solution for any $\eps>0$ (it was proven in \cite[Theorem 1.1]{GNR18}).

Due to convexity, any $K\in \K^N$ with barycenter at the origin can always be written
\[K=\left\{tx(1+h_K(x)),\ t\in[0,1],\ x\in\partial B\right\}\] where $h_K:\partial B\rightarrow\R$ is such that $1+h_K(x)=\sup\{r\geq0, r x\in K\}$ is the distance function to the origin.

Let $K\in \K^N$ be minimizing \eqref{eq:pb_cap1st}, and suppose that $K$ has barycenter at the origin. Let $\eta>0$ and $\eps_0>0$ be given by Theorem \ref{th:cap_Fug_stab}. Now, by minimality 
\begin{align*}P(K)-P(B)&\leq \eps\frac{\text{Cap}(K)-\text{Cap}(B)}{\text{Cap}(K)\text{Cap}(B)}\\
&\leq\frac{\eps}{\text{Cap}(B)}\end{align*}
Thanks to \cite[Lemma 3.3]{Fus15} there exists $\delta_\eta>0$ such that provided $P(K)-P(B)\leq\delta_\eta$ then $\|h_K\|_{W^{1,\infty}(\partial B)}\leq\eta$. We therefore deduce that if $\eps\in(0,\text{Cap}(B)\delta_\eta)$, we have in fact $\|h_K\|_{W^{1,\infty}(\partial B)}\leq\eta$, so that taking $\eps\in\left(0,\min\{\eps_0,\delta_\eta\text{Cap}(B)\}\right)$ we can apply Theorem \ref{th:cap_Fug_stab} to get that $K$ is a ball. This finishes the proof.
\end{proof}

\subsection{Further stability results}
The strategy we employed for proving Theorem \ref{th:stab_cap}  can be adapted to the case of $\lambda_1$. In fact, one can proceed likewise to get a result analogous to the Lemma \ref{lem:cap} below in the case of $\lambda_1$ (see (i) in Proposition \ref{prop:lambda1_stab}), leading to a result of the same type as Theorem \ref{th:stab_cap}. Note that in this case the minimality of the ball is only local, since the functional has no global minimizer (as one sees by taking a sequence of long thin rectangles of unit volume). Let us state the stability result for the sake of clarity.

\begin{prop}[Weak stability of the ball for $\lambda_1$] \label{prop:lambda1_Itw} Let $N\geq2$. There exists $\eps_0>0$ and $\delta>0$ such that for any $\eps \in(0,\eps_0)$,
\[\forall K\in\K^N, |K|=1 \text{ with }|K\Delta B|\leq \delta, \ (P-\eps\lambda_1)(K)\geq (P-\eps\lambda_1)(B).\]
\end{prop}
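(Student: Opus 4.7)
The plan is to parallel the proof of Theorem \ref{th:stab_cap}, replacing the capacity by $\lambda_1$ throughout. The central technical step would be to establish a Lipschitz weak \textbf{(IT)} estimate analogous to Lemma \ref{lem:cap}: there exists a dimensional constant $C_N>0$ such that for every $h\in W^{1,\infty}(\partial B)$ with $\|h\|_{L^\infty(\partial B)}\leq 1/2$ and $|B_h|=|B|$,
\[\lambda_1(B_h)-\lambda_1(B)\leq C_N \|h\|_{H^1(\partial B)}^2.\]

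To obtain this, I would use the Rayleigh quotient characterization of $\lambda_1(B_h)$ with the natural test function $v:=u_B\circ\phi_h$, where $u_B$ is the first Dirichlet eigenfunction of $B$ (normalized by $\int_B u_B^2=1$, so that $\int_B|\nabla u_B|^2=\lambda_1(B)$) and $\phi_h$ is the Lipschitz homeomorphism $\phi_h(x)=x/(1+h(x/|x|))$ introduced in the proof of Lemma \ref{lem:cap}. Since $\phi_h$ maps $B_h$ bijectively onto $B$ and $u_B$ vanishes on $\partial B$, we have $v\in H^1_0(B_h)$. Expanding both integrals via the change of variable $y=\phi_h(x)$ with Jacobian $(1+h)^N$, and using once more the radiality of $u_B$ to annihilate the cross term $(\nabla u_B\circ\phi_h)\cdot\nabla \widetilde{h}$, I would reduce each integral to a power series in $h$ along $\partial B$; the volume constraint in the form \eqref{eq:h_secondorder} absorbs the linear-in-$h$ contribution, leaving $\int_{B_h}|\nabla v|^2\leq \lambda_1(B)+O(\|h\|_{H^1(\partial B)}^2)$ and $\int_{B_h}v^2\geq 1-O(\|h\|_{L^2(\partial B)}^2)$, exactly as in Step 2 of Lemma \ref{lem:cap}. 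Taking the ratio yields the claimed bound.

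Combining this with Faber--Krahn ($\lambda_1(B_h)\geq\lambda_1(B)$), the quadratic Fuglede perimeter inequality $P(B_h)-P(B)\geq\tfrac14\|\nabla_\tau h\|_{L^2(\partial B)}^2$ and the associated Poincar\'e-type bound $\|h\|_{L^2(\partial B)}^2\leq 2\|\nabla_\tau h\|_{L^2(\partial B)}^2$ (valid from \cite[Theorem 3.1]{Fus15} when $\|h\|_{W^{1,\infty}}$ is small, $|B_h|=|B|$ and $B_h$ is centered at the origin), the very same chain of inequalities as in the proof of Theorem \ref{th:cap_Fug_stab} produces constants $\eta,\eps_0>0$ such that $(P-\eps\lambda_1)(B_h)\geq (P-\eps\lambda_1)(B)$ whenever $\|h\|_{W^{1,\infty}(\partial B)}\leq\eta$, $|B_h|=|B|$, $B_h$ has barycenter at the origin, and $\eps\in(0,\eps_0)$. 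Note that, in contrast to the capacity case, no passage through the inverse is needed here since the functional is $P-\eps\lambda_1$ directly.

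Finally, given any $K\in \K^N$ with $|K|=1$ and $|K\Delta B|\leq \delta$, I would translate $K$ so that its barycenter lies at the origin: this leaves $(P-\eps\lambda_1)(K)$ unchanged and only alters $|K\Delta B|$ by $O(\delta)$. Because $K$ is convex with small $L^1$-distance to $B$, it is trapped between two concentric balls of nearly equal radii, and the standard bound on the Lipschitz constant of the radial function of such a convex body gives $\|h_K\|_{W^{1,\infty}(\partial B)}\leq \eta$ for $\delta$ small enough (where $h_K$ is the radial parametrization of $K$, for which $K=B_{h_K}$); the Lipschitz Fuglede-type result above then applies and concludes. The main obstacle lies in the weak \textbf{(IT)} estimate: because $\lambda_1$ is a Rayleigh quotient rather than a pure minimum, one must control numerator and denominator simultaneously to second order in $h$ and check that the volume normalization effectively cancels the linear-in-$h$ cross terms. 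The radial symmetry of $u_B$ is what keeps this computation tractable, in parallel with the role played by the radiality of the capacitary function in Lemma \ref{lem:cap}.
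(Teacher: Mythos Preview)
Your proposal is correct and follows precisely the route the paper indicates: adapt the capacity argument by proving the weak \textbf{(IT)} estimate for $\lambda_1$ via the test function $u_B\circ\phi_h$ in the Rayleigh quotient (this is exactly Proposition~\ref{prop:lambda1_stab}(i)), then combine with Fuglede's perimeter bound and the Poincar\'e inequality as in Theorem~\ref{th:cap_Fug_stab} to get Proposition~\ref{prop:lambda1_stab}(ii), and finally pass to general convex bodies. The only (necessary) deviation from the proof of Theorem~\ref{th:stab_cap} is in the last step: since $P-\eps\lambda_1$ has no global minimizer you cannot argue via existence of a minimizer and a perimeter-deficit bound, and your direct route---$|K\Delta B|$ small plus convexity forces Hausdorff closeness (Proposition~\ref{prop:cvgconv_hausd}), hence $P(K)-P(B)$ small, hence $\|h_K\|_{W^{1,\infty}}$ small by \cite[Lemma~3.3]{Fus15}---is the appropriate adaptation.
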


Although this gives a simple proof of the stability of the ball in the case of $\lambda_1$, this Proposition is strictly weaker than the stronger result we prove in Theorem \ref{th:mainthm}, since the range of $\eps>0$ for which the ball is locally minimal is not optimal (and was already known, see Section \ref{sect:intro_stab_cccc}).
On the other hand, since the analogues of Lemma \ref{lem:cap} and Theorem \ref{th:cap_Fug_stab} in the case of $\lambda_1$ do not explicitly appear in the literature (up to our knowledge), we think it might be of interest to state them rigorously. This is the object of the next result.

\begin{prop}\label{prop:lambda1_stab}
Let $N\geq2$. 
\begin{enumerate}[label=(\roman{*})]
\item(Weak (IT)$_{H^1,W^{1,\infty}}$). There exists $C_N>0$ such that if $h\in W^{1,\infty}(\partial B)$ with $B_h$ of volume $|B_h|=|B|$ and $\|h\|_{L^{\infty}(\partial B)}\leq1/2$ then
\[\lambda_1(B_h)-\lambda_1(B)\leq C_N\|h\|^2_{H^1(\partial B)}.\]
\item(Stability of the ball for Lipschitz perturbations). There exists $\eta>0$ and $\eps_0>0$ such that for all $h\in W^{1,\infty}(\partial B)$ verifying $\|h\|_{W^{1,\infty}(\partial B)}\leq\eta$ with $|B_h|=|B|$ and such that $B_h$ has barycenter at the origin, and for all $\eps\in(0,\eps_0)$, then
\[P(B_h)-\eps\lambda_1(B_h)\geq P(B)-\eps\lambda_1(B)\]
with equality only if $B_h=B$.
\end{enumerate}
\end{prop}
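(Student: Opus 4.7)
The plan is to mirror the proof of Lemma \ref{lem:cap}, exploiting the variational (Rayleigh quotient) characterization of $\lambda_1$ to produce an explicit admissible competitor on $B_h$. Let $u_B$ be the (positive, radial) first Dirichlet eigenfunction of $B$, normalized by $\int_B u_B^2 = 1$, and let $\phi_h(x) := x/(1+h(x))$ be the bi-Lipschitz map from the proof of Lemma \ref{lem:cap}, which sends $B_h$ onto $B$ (with $h$ extended $0$-homogeneously). Since $u_B \in C(\overline B) \cap H^1_0(B)$ vanishes on $\partial B$ and $\phi_h(\partial B_h) = \partial B$, the composition $u_B \circ \phi_h$ lies in $H^1_0(B_h)$, so it is an admissible test function and
\[\lambda_1(B_h) \leq \frac{\int_{B_h} |\nabla (u_B \circ \phi_h)|^2}{\int_{B_h} (u_B \circ \phi_h)^2}.\]
I would then estimate numerator and denominator by the change of variable $y = \phi_h(x)$, for which $dx = (1+h(y))^N\,dy$.

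For the denominator, one gets $\int_B u_B^2\,(1+h)^N\,dy$; expanding $(1+h)^N = 1 + Nh + O(h^2)$ and using polar coordinates reduces the linear term to $c_N \int_{\partial B} h$, which by the volume constraint \eqref{eq:volh}--\eqref{eq:h_secondorder} is $O(\|h\|^2_{L^2(\partial B)})$. For the numerator I would apply the same pointwise identity as in Step 2 of the proof of Lemma \ref{lem:cap}, which only uses the radiality of $u_B$:
\[|\nabla(u_B \circ \phi_h)(x)|^2 = \widetilde h(x)^2 |\nabla u_B(\phi_h(x))|^2 + \bigl((\nabla u_B(\phi_h(x))) \cdot x\bigr)^2 |\nabla \widetilde h(x)|^2.\]
After changing variables the first piece contributes $\int_B (1+h)^{N-2} |\nabla u_B|^2 \,dy$; expanding $(1+h)^{N-2}$ and using both integration by parts (so that $\int_B |\nabla u_B|^2 = \lambda_1(B)$) and the volume constraint to control the linear term in $h$, this is at most $\lambda_1(B) + C_N \|h\|^2_{L^2(\partial B)}$. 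The second piece is bounded by $C_N \|\nabla_\tau h\|^2_{L^2(\partial B)}$ exactly as in \eqref{estim2}. Assembling these, $\lambda_1(B_h) \leq \lambda_1(B) + C_N \|h\|^2_{H^1(\partial B)}$, which is (i).

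Part (ii) would then follow verbatim from the argument in the proof of Theorem \ref{th:cap_Fug_stab}: for $\|h\|_{W^{1,\infty}(\partial B)}$ small enough, the Fuglede-type perimeter estimate $P(B_h) - P(B) \geq \tfrac14 \|\nabla_\tau h\|^2_{L^2(\partial B)}$ together with the Poincaré-type inequality $\|h\|^2_{L^2(\partial B)} \leq 2 \|\nabla_\tau h\|^2_{L^2(\partial B)}$ (both from \cite[Thm 3.1]{Fus15} and its proof, the latter using volume plus barycenter constraints) gives $\|h\|^2_{H^1(\partial B)} \leq 12\,(P(B_h) - P(B))$. Combined with (i), setting $\eps_0 := (12 C_N)^{-1}$, for any $\eps \in (0,\eps_0)$,
\[P(B_h) - \eps \lambda_1(B_h) - \bigl(P(B) - \eps \lambda_1(B)\bigr) \geq (1 - 12 C_N \eps)\,(P(B_h) - P(B)) \geq 0,\]
and equality forces $P(B_h) = P(B)$, hence $h = 0$ by reading the chain backwards.

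The main obstacle is the numerator estimate in (i): one must check that, after changing variables, the first-order-in-$h$ variation of $\int_B (1+h)^{N-2} |\nabla u_B|^2$ is only of size $\|h\|^2_{L^2(\partial B)}$ (which uses crucially the volume constraint via \eqref{eq:h_secondorder}), and that the gradient cross-term involving $\nabla \widetilde h$ can be absorbed into a pure $\|\nabla_\tau h\|^2_{L^2(\partial B)}$ bound. By the radial symmetry of $u_B$ both reductions are identical to those performed for $u_B = \min\{1,|x|^{2-N}\}$ in Lemma \ref{lem:cap}, so the work is essentially bookkeeping rather than introducing genuinely new ingredients.
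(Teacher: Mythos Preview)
Your proof is correct and follows precisely the approach the paper indicates (adapt Lemma~\ref{lem:cap} and Theorem~\ref{th:cap_Fug_stab} to the Rayleigh quotient). One small simplification you may have missed: because $u_B$ is radial and $h$ is $0$-homogeneous, the denominator factors as $\bigl(\int_0^1 v(r)^2 r^{N-1}\,dr\bigr)\int_{\partial B}(1+h)^N$, and the volume constraint \eqref{eq:volh} makes the angular integral equal to $|\partial B|$, so the denominator is \emph{exactly} $1$ rather than merely $1+O(\|h\|_{L^2}^2)$; this removes any need to bound it from below.
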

Let us comment on the second item of this Proposition. While this non-optimal stability of the ball for Lipschitz perturbations is implied by the Payne-Weinberger inequality \eqref{eq:PW} in two dimensions, it does not seem to be known in the case $N\geq3$. It opens up the question regarding a natural class of sets for which the optimal stability from Theorem \ref{th:mainthm} might hold: can one prove optimal stability of the ball for Lipschitz perturbations?

\section{Minimality of the ball in a $C^{1,\alpha}$ neighborhood}\label{sect:IT}

The goal of this section consists in proving the first step of the \textit{selection principle} strategy as we described it in the Introduction, namely that the ball is a strict (up to translation) minimum in a $C^{1,\alpha}$ neighborhood of the functional $P-c\lambda_1$ for $c\in(0,c^*)$ and any $\alpha\in(\frac{1}{2},1)$. This is stated in next result.

Let us set a few preliminary notations. The notation $B$ refers to the open ball of unit volume centered at $0$. In this section, any function $h:\partial B\rightarrow\R$ defined on the sphere is extended to the whole of $\R^N$ by setting $h(x):=\theta(x)h(\frac{x}{|x|})$ (for some smooth $\theta$ with $\theta\equiv1$ near $\partial B$) into  a compactly supported function as smooth as $h$ and which is constant near $\partial B$ along directions normal to $\partial B$. Note that this extension is different from the one we make in Section \ref{sect:cap} (in Lemma \ref{lem:cap}). For any $\alpha\in(0,1]$ and $h\in C^{1,\alpha}(\partial B)$ we denote $\xi_h(x):=h(x)x$, so that $\|\xi_h\|_{(C^{1,\alpha}(\R^N))^N}\leq C\|h\|_{C^{1,\alpha}(\partial B)}$, for some $C>0$. We set 
\[B_h=\{tx(1+h(x)),\ t\in[0,1),\ x\in\partial B\}=(\text{Id}+\xi_h)(B),\]
which is a bounded Lipschitz open set provided $\|\xi_h\|_{(W^{1,\infty}(\R^N))^N}<1$ (which we will always assume in the remainder of this section), with boundary $\partial B_h=\{x(1+h(x)),\ x\in\partial B\}$. 

\begin{theorem}[Fuglede-type computations for $P-c\lambda_1$: minimality for $C^{1,\alpha}$ perturbations]\label{cor:min_smooth} Let $N\geq2$. For $c>0$ set $\mathcal{J}_c:=P-c\lambda_1$ and let $c^*$ be given by \eqref{eq:defc*}. For any $\alpha\in(\frac{1}{2},1)$ and $0<c<c^*$ there exists $\delta_{c,\alpha}>0$ such that for all $h\in C^{1,\alpha}(\partial B)$ with $\|h\|_{C^{1,\alpha}(\partial B)}\leq\delta_{c,\alpha}$ and $|B_h|=|B|$ then
\[\ \mathcal{J}_c(B_h)\geq \mathcal{J}_c(B)\] 
with equality only if (up to translating) $B_h=B$.
\end{theorem}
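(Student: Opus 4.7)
The plan is to reduce the problem to a quadratic analysis on spherical harmonics. After a translation I can assume $B_h$ has barycenter at $0$; combined with the volume constraint $|B_h|=|B|$, algebraic expansions in the spirit of \eqref{eq:h_secondorder} yield
\begin{equation*}
\left|\int_{\partial B} h\right| + \sum_{i=1}^{N} \left|\int_{\partial B} h\, x_i\right| \lesssim \|h\|_{L^\infty(\partial B)} \|h\|_{L^2(\partial B)}^2.
\end{equation*}
Decomposing $h = \sum_{k\geq 0} h_k$ on spherical harmonics, with $h_k$ the $L^2$-projection onto the $k$-th eigenspace of the Laplace-Beltrami on $\partial B$ (eigenvalue $\mu_k = k(k+N-2)$), these bounds show that the $k=0$ and $k=1$ components are of order $\|h\|^2$, so only the modes $k\geq 2$ control the quadratic energy.

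I then expand $\mathcal{J}_c(B_h) - \mathcal{J}_c(B)$ to second order in $h$. Fuglede's classical computation for the perimeter yields
\begin{equation*}
P(B_h) - P(B) = \tfrac{1}{2} \sum_{k \geq 0} \bigl(\mu_k - (N-1)\bigr)\|h_k\|_{L^2(\partial B)}^2 + o\bigl(\|h\|_{H^1(\partial B)}^2\bigr).
\end{equation*}
For $\lambda_1$, the first-order Hadamard formula $\lambda_1'(B)[h\nu]=-|\nabla u_B|^2\int_{\partial B}h$ (since $|\nabla u_B|$ is constant on $\partial B$) is absorbed by the volume constraint, while the second-order shape derivative at the ball (computed by Nitsch \cite{N14}, which I would re-derive here with an explicit $C^{1,\alpha}$ remainder) produces a diagonal quadratic form $\sum_{k\geq 0} \tau_{k,N}\|h_k\|_{L^2}^2$ with coefficients $\tau_{k,N}$ explicit in terms of $u_B$ and $l_N$. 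Combining both, the critical value $c^*$ defined in \eqref{eq:defc*} is precisely the value at which the coefficient of $\|h_2\|_{L^2}^2$ vanishes, while for $k\geq 3$ the coefficients remain bounded below by a positive multiple of $\mu_k$; therefore for any $c<c^*$ one has
\begin{equation*}
\mathcal{J}_c(B_h)-\mathcal{J}_c(B) \geq \gamma\,\|h\|_{H^1(\partial B)}^2 - \mathrm{Rem}(h),
\end{equation*}
for some $\gamma = \gamma(c,N)>0$, after using the constraints above to absorb the $h_0, h_1$ contributions.

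The main obstacle is to show $\mathrm{Rem}(h) = o(\|h\|_{H^1(\partial B)}^2)$ uniformly for $\|h\|_{C^{1,\alpha}}$ small, with $\alpha>\tfrac12$. For the perimeter part this is classical Fuglede. For $\lambda_1$ the natural approach is to pull the eigenvalue problem on $B_h$ back to $B$ via $\Phi_h := \text{Id}+\xi_h$, which turns $-\Delta$ on $B_h$ into a divergence-form operator on $B$ whose coefficients are $C^{0,\alpha}$ and depend smoothly on $h$ in the $C^{1,\alpha}$ topology; by standard elliptic regularity the pulled-back first eigenfunction then lies in $C^{1,\alpha}(\overline{B})$ uniformly in small $h$. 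Expanding the transported Rayleigh quotient produces the expected second-order term plus a remainder controlled by $\|h\|_{C^{1,\alpha}}^{\beta}\,\|h\|_{H^1}^2$ for some $\beta>0$, the hypothesis $\alpha>\tfrac12$ being used to interpolate the boundary traces between $C^{1,\alpha}$ and $H^1$ at the required rate. Choosing $\delta_{c,\alpha}$ small enough that $\mathrm{Rem}(h) \leq \tfrac{\gamma}{2}\|h\|_{H^1}^2$, we conclude $\mathcal{J}_c(B_h)\geq\mathcal{J}_c(B)$, with equality forcing $h\equiv 0$; undoing the initial translation then gives $B_h=B$ up to translation, completing the proof.
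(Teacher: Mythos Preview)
Your overall plan coincides with the paper's: expand $\mathcal{J}_c(B_h)-\mathcal{J}_c(B)$ to second order, use the strict positivity of the second variation on harmonics of degree $k\geq 2$ for $c<c^*$ (this is exactly \cite{N14}), and absorb the remainder by making $\|h\|_{C^{1,\alpha}}$ small. The paper formalizes this via the \textbf{(IT)}$_{H^1,C^{1,\alpha}}$ property (Theorem~\ref{th:ITJc}) and then invokes the abstract \cite[Theorem~1.3]{DL19} together with the stability computation of \cite{N14}.

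The gap in your proposal is the remainder estimate itself, which is the entire technical content of the result --- the quadratic analysis was already in \cite{N14,DL19}. Your sentence ``Expanding the transported Rayleigh quotient produces the expected second-order term plus a remainder controlled by $\|h\|_{C^{1,\alpha}}^{\beta}\|h\|_{H^1}^2$'' is precisely what needs proof, and it is not routine. The second shape derivative of $\lambda_1$ at a domain $\Omega$ (formula \eqref{eq:2nd_deriv_lambda}) involves the mean curvature and second fundamental form of $\partial\Omega$; along the path $t\mapsto B_{th}$ with $h$ merely $C^{1,\alpha}$ these do not exist pointwise, so a direct Taylor expansion of the pulled-back Rayleigh quotient to order two is not available. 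The paper handles this by: (i) first justifying \eqref{eq:2nd_deriv_lambda} for $C^{1,1}$ domains via a low-regularity expression from \cite{BuBog22} (Lemma~\ref{lem:lambda''Om}); (ii) proving the \textbf{(IC)} estimate $|\lambda_1''(t)-\lambda_1''(0)|\leq\omega(\|h\|_{C^{1,\alpha}})\|h\|_{H^{1/2}}^2$ for $h\in C^{1,1}$, where curvature differences such as $\widehat{H_t}-H_0$ are placed in \emph{negative} Sobolev spaces $W^{-s,p'}$ (Lemma~\ref{lem:geom}) and paired against $h^2$ via the product law $H^{1/2}\cdot H^{1/2}\subset W^{s,p}$ (Proposition~\ref{prop:factFspq}\ref{prop:prodH1/2}); and (iii) passing to $h\in C^{1,\alpha}$ by density. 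The restriction $\alpha>\tfrac12$ enters specifically through the product law $C^{0,\alpha}\cdot H^{1/2}\subset H^{1/2}$ (Proposition~\ref{prop:factFspq}\ref{prop:prodCalpha}), used in Lemma~\ref{lem:est_v'} and in the $\mathcal{T}_2,\mathcal{T}_3$ estimates of the proof of Theorem~\ref{th:IClambda}, rather than through a generic ``interpolation of boundary traces''. Your uniform $C^{1,\alpha'}$ control on the pulled-back eigenfunction is indeed one ingredient (it is Lemma~\ref{th:lemma_vt_mod}), but by itself it does not yield the required remainder bound.
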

Note that this result is of interest in itself, in  particular no convexity constraint of the sets $B_h$ is assumed.
Let us emphasize on the importance of the space $C^{1,\alpha}$ in which we obtain minimality of the ball, regarding the general goal of proving minimality of $\mathcal{J}_c$ for all convex shapes (see Theorem \ref{th:mainthm}). In fact, Theorem  \ref{cor:min_smooth} is to be compared to \cite[Proposition 5.5]{DL19}, where the authors get minimality of the ball for the same interval of $c$'s but in a $W^{2,p}$ neighborhood (for any $p>N$), which was an improvement of previous works for $C^{2,\alpha}$ perturbations from \cite{DP00,D02}. Note that neither Theorem  \ref{cor:min_smooth} nor \cite[Proposition 5.5]{DL19} implies the other result, as the Hölder space $C^{1,\alpha}$ and the Sobolev space $W^{2,p}$ are not comparable in general. On the other hand, the cited $C^{2,\alpha}$ and $W^{2,p}$ results are not enough to apply the \textit{selection principle} strategy, as this procedure does not give more than convergence in any $C^{1,\alpha}$ sense of quasi-minimizers (see Corollary \ref{thm:cor_LP} and Remark \ref{rk:optH}).
 
One of the main ingredients of the proof of Theorem \ref{cor:min_smooth} consists in proving a so-called {\bf (IT)} property for the functional \[\mathcal{J}_c:=P-c\lambda_1.\] This is achieved in Theorem \ref{th:ITJc}. This property was introduced in \cite[p. 3012]{DL19}, and describes a suitable second-order Taylor expansion at the ball $B$ for the functional $\mathcal{J}_c$, where one identifies the remainder as the product of some ``weak" Sobolev norm of the perturbation by something which goes to $0$ as the perturbation goes to $0$ in a much stronger sense (see Theorem \ref{th:ITJc} for a precise statement).

Let us first define the notion of shape differentiability for a shape functional. If $\Om\subset\R^N$ is a bounded open set and $\xi\in W^{1,\infty}(\R^N,\R^N)$ we denote by $\Om_\xi:=(\text{Id}+\xi)(\Om)$ the open Lipschitz deformation of $\Om$ by $\xi$.  
\begin{definition}\label{def:diffGeneral}
 Let $N\geq2$. Let $\mathcal{J}:\{\Om\subset \R^N,\ \Om \text{ open bounded}\}\rightarrow\R$ be a functional. Let $\Om\subset \R^N$ be open bounded and let $X \subset W^{1,\infty}(\R^n,\R^n)$ be a normed space. For $k\in\{1,2\}$ we say that $\mathcal{J}$ is $k$-times shape differentiable around $\Om$ (for the space $X$) in the direction $\xi\in X$ if the function \[\mathcal{J}_\Om:\xi\in X\mapsto \mathcal{J}(\Om_{\xi})\] is $k$-times differentiable Fréchet-differentiable in a neighborhood of $0$. We denote by $\mathcal{J}_\Om'(\xi)\in \mathcal{L}^1(X,\R)$ (respectively $\mathcal{J}_\Om''(\xi)\in \mathcal{L}^2(X\times X,\R)$) the first (respectively second) derivative at $\xi\in X$.
\end{definition}

\begin{remark}Note that although $\mathcal{J}_\Om'(\xi)$ and $\mathcal{J}_\Om''(\xi)$ are \textit{a priori} linear and bilinear continuous forms over $X$, provided the set $\Om$ enjoys some regularity properties it happens very often that they can be naturally extended to spaces of much lower regularity; for instance, the perimeter functional $P$ has its first derivative continuous for the $L^2$ norm, while its second derivative can be continuously extended in $H^1$. In the case of $\lambda_1$ it is respectively the $L^2$ and $H^{1/2}$ spaces over which the first and second derivatives can be defined (see for instance \cite[Lemma 2.8]{DL19}). \end{remark}
We can now state the second main result of this section.

\begin{theorem}[\textbf{(IT)} property for $\mathcal{J}_c$]\label{th:ITJc}Let $N\geq2$ and $\alpha\in(\frac{1}{2},1)$. For $c>0$ set $\mathcal{J}_c:=P-c\lambda_1$. For any $c>0$
there exists $\delta_{c}>0$ and a modulus of continuity $\omega_c$ such that for all $h\in C^{1,\alpha}(\partial B)$ with 
$\|h\|_{C^{1,\alpha}(\partial B)}\leq \delta_c$ it holds
\[\mathcal{J}_c(B_h)=\mathcal{J}_c(B)+(\mathcal{J}_c)_B'(0)\cdot (\xi_h)+ \frac{1}{2}(\mathcal{J}_c)_B''(0)\cdot (\xi_h,\xi_h)+\omega_c(\|h\|_{C^{1,\alpha}(\partial B)})\|h\|^2_{H^1(\partial B)}\] \end{theorem}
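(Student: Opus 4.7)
The plan is to treat the two pieces of $\mathcal{J}_c$ separately: by linearity of the desired Taylor expansion it suffices to prove the analogous identity for each of $F=P$ and $F=\lambda_1$. For each, I would view $h\mapsto F(B_h)$ as a $C^2$-map on a small $C^{1,\alpha}(\partial B)$-ball, apply the integral form of Taylor's formula
\[F(B_h)-F(B)-F_B'(0)\cdot\xi_h-\tfrac{1}{2}F_B''(0)(\xi_h,\xi_h)=\int_0^1(1-t)\bigl[F_B''(\xi_{th})-F_B''(0)\bigr](\xi_h,\xi_h)\,dt,\]
and reduce the theorem to proving the following continuity property: the bilinear form $F_B''(\xi_h)$, continuously extended to $H^1(\partial B)\times H^1(\partial B)$, depends continuously on $\xi_h$ in the $C^{1,\alpha}$ topology. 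Combined with the equivalence $\|\xi_h\|_{H^1(\R^N)}\lesssim\|h\|_{H^1(\partial B)}$ built into the chosen extension of $h$, this produces the sought-after $\omega_c(\|h\|_{C^{1,\alpha}})\|h\|^2_{H^1(\partial B)}$ remainder.

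For the perimeter, I would start from the polar-coordinate formula $P(B_h)=\int_{\partial B}(1+h)^{N-1}\sqrt{(1+h)^2+|\nabla_\tau h|^2}\,d\mathcal{H}^{N-1}$, differentiate twice in $h$ under the integral sign, and observe that the resulting integrand is a smooth function of $(h,\nabla_\tau h)$ with at most two factors of $\nabla_\tau h$. Since $C^{1,\alpha}$-smallness of $h$ controls both $\|h\|_{L^\infty(\partial B)}$ and $\|\nabla_\tau h\|_{L^\infty(\partial B)}$, dominated convergence yields the required continuity of $P_B''(\xi_h)$ in the target space, completing the perimeter part.

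For $\lambda_1$, the approach is to pull the eigenvalue problem back to $B$ via the diffeomorphism $\phi_h:=\text{Id}+\xi_h$, which recasts $\lambda_1(B_h)$ as
\[\lambda_1(B_h)=\inf_{v\in H^1_0(B)}\frac{\int_B A_h\nabla v\cdot\nabla v}{\int_B J_h v^2},\qquad A_h:=J_h(D\phi_h)^{-1}(D\phi_h)^{-T},\ J_h:=\det D\phi_h.\]
Since $A_h\in C^{0,\alpha}(B;\R^{N\times N})$ depends smoothly on $h\in C^{1,\alpha}$, the implicit function theorem applied to the map $(h,v,\lambda)\mapsto(-\operatorname{div}(A_h\nabla v)-\lambda J_h v,\int_B J_h v^2-1)$ shows that the normalized minimizer $\tilde u_h$ is $C^2$ from a $C^{1,\alpha}$-neighborhood of $0$ into $H^2(B)\cap H^1_0(B)$. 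Plugging the resulting second-order expansion of $\tilde u_h$ back into the Rayleigh quotient and identifying the leading-order terms yields the known expressions for $(\lambda_1)_B'(0)$ and $(\lambda_1)_B''(0)$; the bilinear form $(\lambda_1)_B''(\xi_h)(\varphi,\varphi)$ at nearby $h$ is obtained by the same formal computation, its coefficients being algebraic expressions in $A_h$, $J_h$, $\tilde u_h$ and their first derivatives.

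The main obstacle is the continuity of $(\lambda_1)_B''$ as an element of $\mathcal{L}^2(H^1(\partial B)\times H^1(\partial B),\R)$ with respect to $h\in C^{1,\alpha}$. After unfolding and integration by parts, $(\lambda_1)_B''(\xi_h)(\varphi,\varphi)-(\lambda_1)_B''(0)(\varphi,\varphi)$ splits into bulk integrals of the form $\int_B\mathcal{B}_h\,\nabla\varphi\cdot\nabla\varphi$ and boundary integrals $\int_{\partial B}b_h\,\varphi^2$ arising from the boundary traces of $\tilde u_h$ and its derivatives, with coefficients $\mathcal{B}_h,b_h$ vanishing as $\|h\|_{C^{1,\alpha}}\to 0$. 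The restriction $\alpha>1/2$ enters precisely at this point, ensuring enough Hölder regularity of the boundary coefficients to close the estimate against $\|\varphi\|^2_{H^1(\partial B)}$ rather than the weaker $H^{1/2}$-norm which a naive trace argument would produce. This delicate bookkeeping --- essentially the interplay between the smoothness of $u_B$ near $\partial B$ and the Hölder regularity of the perturbation --- is the technical core of the proof, and is where I expect the computation to become genuinely long.
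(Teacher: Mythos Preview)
Your high-level strategy---separating $P$ and $\lambda_1$, applying Taylor's formula with integral remainder, and reducing to continuity of the second shape derivative---matches the paper exactly; the paper's proof of Theorem~\ref{th:ITJc} itself is a few lines, citing Fuglede's expansion~\eqref{eq:IT_P} for $P$ and invoking the $\textbf{(IC)}_{H^{1/2},C^{1,\alpha}}$ property for $\lambda_1$ (Theorem~\ref{th:IClambda}). All the work is in that $\textbf{(IC)}$ property, and here your sketch contains a genuine gap.

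You propose to compute $(\lambda_1)''_B$ via the pulled-back Rayleigh quotient and then, ``after unfolding and integration by parts,'' obtain boundary terms $\int_{\partial B}b_h\varphi^2$ with $b_h\to 0$ as $\|h\|_{C^{1,\alpha}}\to 0$. This is precisely where the argument breaks: integration by parts leads to the geometric formula of Lemma~\ref{lem:lambda''Om}, whose boundary integrand contains the mean curvature $H_t$ and second fundamental form $b_t$ of $\partial B_t$. The difference $\widehat{H_t}-H_0$ involves \emph{second} derivatives of $h$ and is not small in any $L^\infty$ (or even $L^1$) sense when $h$ is merely $C^{1,\alpha}$. The paper's remedy is to first assume $h\in C^{1,1}$ (so that $H_t,b_t$ make sense), then write $\widehat{H_t}-H_0$ in the $\omega_{s,p,\alpha}(\xi)$ form of Lemma~\ref{lem:geom}---a sum of products of a factor controlled in $W^{-s,p'}(\partial B)$ (via $\|\partial_{ij}\xi\|_{W^{-s,p'}}\lesssim\|\nabla\xi\|_{W^{1-s,p'}}\lesssim\|\xi\|_{C^{1,\alpha}}$) with a factor small in $C^{0,\alpha}$---and pair by duality against $h^2\in W^{s,p}$ using the product law $H^{1/2}\cdot H^{1/2}\subset W^{s,p}$. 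This actually yields the \emph{stronger} $H^{1/2}$ remainder, and $\alpha>\tfrac12$ enters through the embedding $C^{0,\alpha}\hookrightarrow W^{1-s,p'}$ (one needs $1-\alpha<s<\tfrac12$) and through the product law $C^{0,\alpha}\cdot H^{1/2}\subset H^{1/2}$ used in Lemma~\ref{lem:est_v'}---not, as you wrote, to ``close against $H^1$ rather than $H^{1/2}$'' (the $H^{1/2}$ target is the harder one). A final density step, relying on the low-regularity formula~\eqref{eq:lambda''low} to pass to the limit, removes the auxiliary $C^{1,1}$ hypothesis.

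If instead you \emph{avoid} integration by parts and stay with the bulk pull-back expression, curvature never appears---but then the second derivative is not of the form $\int_B\mathcal B_h\nabla\varphi\cdot\nabla\varphi$ you described: it contains $\int_{B}|\nabla\dot v_\varphi|^2$ with the material derivative $\dot v_\varphi\in H^1_0(B)$ solving an elliptic equation linear in $D\xi_\varphi$. Carrying this through still requires continuity estimates of the type $\|\widehat{\dot v_t}-\dot v_0\|_{H^1(B)}\leq\omega(\|h\|_{C^{1,\alpha}})\|h\|_{H^1(\partial B)}$, i.e.\ analogues of Lemmas~\ref{th:lemma_vt_mod} and~\ref{lem:est_v'}. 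That route may well succeed for the $H^1$ statement and could be more elementary than the paper's, but your proposal does not identify it.
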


This condition was defined in \cite{DL19} as the \textbf{(IT)}$_{H^1,C^{1,\alpha}}$ condition, meaning that the functional $\mathcal{J}_c$ verifies a second-order Taylor expansion with the remainder term behaving as the product of $\|h\|_{H^1}^2$ with a modulus of continuity of $\|h\|_{C^{1,\alpha}}$.
It was shown by Fuglede in \cite{F89} that the perimeter satisfies a stronger \textbf{(IT)}$_{H^1,W^{1,\infty}}$ condition (see for instance \cite[Proposition 4.5]{DL19}: there exists $\omega_P$ a modulus of continuity and $\delta_P>0$ such that for all $h\in W^{1,\infty}(\partial B)$ with $\|h\|_{W^{1,\infty}(\partial B)}\leq \delta_P$ it holds
\begin{equation} \label{eq:IT_P}P(B_h)=P(B)+P_B'(0)\cdot (\xi_h)+\frac{1}{2}P_B''(0)\cdot(\xi_h,\xi_h)+\omega_P\left(\|h\|_{W^{1,\infty}(\partial B)}\right)\|h\|^2_{H^1}\end{equation}
 As a consequence, proving Theorem \ref{th:ITJc} reduces to proving an \textbf{(IT)}$_{H^1,C^{1,\alpha}}$ condition result for $\lambda_1$ (see the proof of Theorem \ref{th:ITJc} for this reduction),  and in fact we will prove a stronger \textbf{(IT)}$_{H^{1/2},C^{1,\alpha}}$ condition for $\lambda_1$ (for any $\alpha\in(\frac{1}{2},1)$). In order to do so we follow the strategy laid out by \cite{DL19}: it will be convenient to show that $\lambda_1$ verifies the so-called condition \textbf{(IC)}$_{H^{1/2},C^{1,\alpha}}$, as stated in next Theorem, which constitutes the core result of this section.

\begin{theorem}[\textbf{(IC)} property for $\lambda_1$]\label{th:IClambda}Let $N\geq2$ and $\alpha\in(\frac{1}{2},1)$. For any $t\in [0,1]$ and $h\in C^{1,\alpha}(\partial B)$ let $\lambda_1(t):=\lambda_1(B_{th})$. Then the functional $\lambda_1$ satisfies an \textbf{\emph{(IC)}}$_{H^{1/2},C^{1,\alpha}}$ condition at the ball $B$, {\it i.e.} there exists $\delta>0$ and a modulus of continuity $\omega_{\lambda_1}$ such that for any $h\in C^{1,\alpha}(\partial B)$ with $\|h\|_{C^{1,\alpha}(\partial B)}\leq\delta$ we have
\begin{equation}\label{eq:lambda_1_t}\forall t\in[0,1],\ |\lambda_1''(t)-\lambda_1''(0)|\leq \omega_{\lambda_1}(\|h\|_{C^{1,\alpha}(\partial B)})\|h\|^2_{H^{1/2}(\partial B)}.\end{equation}
\end{theorem}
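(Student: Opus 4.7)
The approach is to obtain an explicit expression for $\lambda_1''(t)$ at each $t\in[0,1]$ by doing shape calculus directly on the moving domain $\Om_t := B_{th}$, and then to compare it to $\lambda_1''(0)$ after transporting every quantity back to the fixed sphere $\partial B$. Let $u_t$ denote the $L^2$-normalized first eigenfunction on $\Om_t$, and let $u_t^{\sharp}$ denote its (material) derivative at $t$ along the flow $s\mapsto B_{(t+s)h}$; a direct differentiation of the eigenvalue equation and the normalization identity shows that $u_t^\sharp$ solves an elliptic PDE on $\Om_t$ with Dirichlet datum involving $\partial_{n_t}u_t$ and $\xi_h\cdot n_t$. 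A standard second-order Hadamard computation then expresses $\lambda_1''(t)$ as the sum of a bulk energy $\int_{\Om_t}|\nabla u_t^\sharp|^2$ (plus a lower-order multiple of $\int_{\Om_t}|u_t^\sharp|^2$) and of boundary integrals over $\partial\Om_t$ depending on $h$, $\xi_h$, $u_t$, $\partial_{n_t}u_t$ and the mean curvature $H_t$.

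Since $\partial\Om_t$ is $C^{1,\alpha}$ uniformly in $t$ when $\|h\|_{C^{1,\alpha}(\partial B)}$ is small, Schauder estimates applied to the pulled-back eigenfunction $u_t\circ(\text{Id}+t\xi_h)$ yield both a uniform $C^{2,\alpha}(\oOm)$ bound and $C^{2,\alpha}$-convergence to $u_0$ as $\|h\|_{C^{1,\alpha}(\partial B)}\to 0$, with rate controlled by some modulus $\omega_1(\|h\|_{C^{1,\alpha}(\partial B)})$. In particular every ``coefficient'' appearing in the pulled-back Hadamard formula — the traces of $\partial_{n_t}u_t$, of $n_t$, of $H_t$, and the tangential Jacobian of the trace — converges in $C^{0,\alpha}(\partial B)$ to its $t=0$ counterpart at the same rate. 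Writing $\lambda_1''(t)-\lambda_1''(0)$ termwise as ``(coefficient difference)$\times$(quadratic form in $h$ and $\nabla_\tau h$)'' absorbs the coefficient differences into a modulus $\omega_{\lambda_1}(\|h\|_{C^{1,\alpha}(\partial B)})$, reducing everything to bounding each residual quadratic form by $\|h\|_{H^{1/2}(\partial B)}^2$.

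The main obstacle, and the source of the technical length alluded to in the introduction, is precisely the sharpness of that last bound: reaching the $H^{1/2}$ norm rather than the more obvious $H^1$ norm. For the boundary-integral pieces this exploits the spherical symmetry of $u_0$ (so that $\partial_{n}u_0$ is constant on $\partial B$), which permits rearranging the residual pairings of $h$, $\xi_h\cdot n$ and $\nabla_\tau h$ into the Dirichlet-to-Neumann pairing of the ball, known to be continuous on $H^{1/2}(\partial B)\times H^{-1/2}(\partial B)$. For the bulk piece $\int_{\Om_t}|\nabla u_t^\sharp|^2$, an integration by parts against the equation for $u_t^\sharp$ rewrites it as a Dirichlet-to-Neumann pairing on $\partial\Om_t$ of the boundary datum of $u_t^\sharp$; since that datum depends on $h$ only through the factor $\xi_h\cdot n_t$ (with smooth prefactor $\partial_{n_t}u_t$), one inherits an $\|h\|_{H^{1/2}(\partial B)}^2$ control after transport. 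The hard part is to carry out all these manipulations on the moving $C^{1,\alpha}$ domain $\Om_t$, uniformly in $t$, while keeping track of the dependence of every constant on $\|h\|_{C^{1,\alpha}(\partial B)}$ so that it can be folded into the final modulus $\omega_{\lambda_1}$. Once the estimate is established, (IC) feeds directly, via a double integration in $t$, into the desired \textbf{(IT)}$_{H^{1/2},C^{1,\alpha}}$ expansion for $\lambda_1$, and hence of $\mathcal{J}_c$, as required in Theorem~\ref{th:ITJc}.
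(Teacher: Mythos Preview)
Your overall architecture—compute $\lambda_1''(t)$ via a Hadamard formula, transport to $\partial B$, and estimate the difference termwise—matches the paper. But there is a genuine gap in the regularity claims.

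You assert that Schauder theory gives a uniform $C^{2,\alpha}(\oOm)$ bound on the pulled-back eigenfunction and that ``the traces of $H_t$ \ldots converge in $C^{0,\alpha}(\partial B)$''. Neither holds when $h$ is merely $C^{1,\alpha}$. The pulled-back equation $-\mathrm{div}(A_\xi\nabla\widehat{v_\xi})=\lambda_\xi J_\xi\widehat{v_\xi}$ has coefficients $A_\xi$ that are only $C^{0,\alpha}$, so first-order Schauder estimates yield $C^{1,\alpha'}$ control (this is exactly what the paper proves in Lemma~\ref{th:lemma_vt_mod}), not $C^{2,\alpha}$. More seriously, the mean curvature $H_t$ of a $C^{1,\alpha}$ hypersurface is not a function at all—it involves second derivatives of $\xi_h$, which do not exist classically. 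So your plan of absorbing the curvature difference into a $C^{0,\alpha}$ modulus cannot work as stated.

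The paper's way around this is the point of Lemma~\ref{lem:geom}: one first assumes $h\in C^{1,1}$ so that $H_t\in L^\infty$ makes sense, and then shows the expansion $\widehat{H_\xi}-H_0=\omega_{s,p,\alpha}(\xi)$ holds only in the weak sense of a \emph{negative} Sobolev space $W^{-s,p'}(\partial B)$, with the key observation that $\|\partial_{ij}\xi_k\|_{W^{-s,p'}}\le C\|\xi\|_{C^{1,\alpha}}$ for $s>1-\alpha$. The $H^{1/2}$ control on the boundary terms $\mathcal{T}_2,\mathcal{T}_3$ then comes from pairing this $W^{-s,p'}$ smallness against the product $h^2$ lying in $W^{s,p}$ via the product law $H^{1/2}\cdot H^{1/2}\subset W^{s,p}$ of Proposition~\ref{prop:factFspq}\ref{prop:prodH1/2}—not from a Dirichlet-to-Neumann rearrangement. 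Finally, the restriction $h\in C^{1,1}$ is removed at the end by a density argument, passing to the limit in the low-regularity second-derivative formula \eqref{eq:lambda''low} from \cite{BuBog22}. Your sketch omits both the negative-Sobolev treatment of the curvature and the $C^{1,1}$-to-$C^{1,\alpha}$ density step, and these are precisely the new ideas the paper needs to go below $W^{2,p}$ perturbations.
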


The proof of this result is inspired by the strategy of \cite[Theorem 1.4]{DL19} for proving that $\lambda_1$ satisfies an \textbf{(IC)}$_{H^{1/2},W^{2,p}}$ condition. Nevertheless, as $C^{1,\alpha}$ functions may not be twice differentiable even in a weak sense, some estimates require a refined analysis (see Lemma \ref{lem:geom}) and new methods (see Lemma \ref{th:lemma_vt_mod}). We believe that this result is of independent interest, since it goes strictly below spaces with second derivatives as $C^{2,\alpha}$ or $W^{2,p}$ spaces, which are the usual spaces for which this kind of property is obtained (see for instance \cite{D02} or \cite{AFM13}).

Let us mention that in order to prove Theorem \ref{th:IClambda} we will first prove it for functions $h\in C^{1,1}(\partial B)$ instead of $h\in C^{1,\alpha}(\partial B)$, as it will allow us to consider the second-order geometric quantities of $B_h$ (such as the mean curvature and second fundamental form) in the classical sense as functions of $L^\infty(\partial B_h)$, thus easing the computations (in particular in Lemmas \ref{lem:geom}, \ref{lem:lambda''Om} and \ref{lem:lambda''t}). We then remove this additional regularity assumption by a density argument.

The expression of $\lambda_1''(t)$ (see Lemma \ref{lem:lambda''t}) involves both PDE-type terms and geometric terms. We start this section by proving three preparatory Lemmas in Section \ref{sect:contOm}, which provide continuity-type estimates in the domain $\Om$ of the quantities involved in the expression of $\lambda_1''(t)$. 

\subsection{Continuity in the domain $\Om$.}\label{sect:contOm}

In this section we prove three preparatory Lemmas. Lemmas \ref{th:lemma_vt_mod} and \ref{lem:est_v'} will be useful for us to estimate the PDE terms in the variation $|\lambda_1''(t)-\lambda_1''(0)|$, while Lemma \ref{lem:geom} will enable us to estimate the geometric terms. Note that we will also use them as a means to justify the expression of $\lambda_1''(t)$ from Lemma \ref{lem:lambda''t}. Let us set some notations for this section.\\

\textbf{Geometric notation.}
Let $\Om$ be a $C^{1,1}$ bounded open set. For the remainder of this section we consider a vector field $\xi\in W^{1,\infty}(\R^N,\R^N)$ such that $\|\xi\|_{(W^{1,\infty}(\R^N))^N}<1$, so that $(\text{Id}+\xi)(\Om)$ is a Lipschitz open set. 

We consider $\xi\in C^{1,1}(\R^N,\R^N)$ and set $\Om_\xi$ the $C^{1,1}$ open set $(\text{Id}+\xi)(\Om)$.
The operator $\nabla_{\tau_\xi}$ denotes the tangential gradient over $\partial \Om_\xi$, $\text{div}_{\tau_\xi}$ and $D_{\tau_\xi}$ respectively the tangential divergence and jacobian. Setting $n_\xi\in C^{0,1}(\partial\Om_\xi)$ the outer unit normal of $\Om_\xi$ (in particular $n_0$ denotes the outer unit normal of $\Om$), we set $H_\xi:=\text{div}_{\tau_\xi}(n_\xi)\in L^\infty(\partial \Om_\xi)$ (respectively $b_\xi:=D_{\tau_\xi}n_\xi\in (L^\infty(\partial \Om_\xi\times\partial \Om_\xi))^{N\times N}$) the mean curvature (respectively, second fundamental form) on $\partial \Om_\xi$. 

Letting $\phi_\xi$ be the Lipschitz homeomorphism $\text{Id}+\xi$, when a function $f_\xi$ is defined on $\Om_\xi$ (respectively $\partial\Om_\xi$) we denote $\widehat{f_\xi}$ the function $f_\xi\circ \phi_\xi$ defined over $\Om$ (repectively $\partial \Om$). 
We also introduce $\tilde{J}_\xi$ the surface Jacobian from $\partial \Om$ to $\partial \Om_\xi$ given by the expression $\tilde{J}_\xi= \text{det}(D\phi_\xi)|D\phi_\xi^{-T}n_0|$, meaning that for a $C^1(\partial \Om_\xi)$ function $f_\xi$ we have
\begin{equation}\label{eq:change_surface}
\int_{\partial \Om_\xi}f_\xi=\int_{\partial\Om}\tilde{J}_\xi\widehat{f_\xi}.
\end{equation}

\textbf{PDE notation.} For $\xi\in W^{1,\infty}(\R^N,\R^N)$, we denote by $v_\xi$ the first $L^2$ normalized and nonnegative Dirichlet eigenfunction over $\Om_\xi$, and set $\lambda_\xi:=\lambda_1(\Om_\xi)$. 

We start by proving a continuity-type estimate of $v_\xi$ and $\lambda_\xi$ in the spirit of \cite[Lemma 4.8]{DL19}.

\begin{lemma}\label{th:lemma_vt_mod}Let $\Om$ be a $C^{1,1}$ bounded open set. Let $0<\alpha'<\alpha<1$. There exists a modulus of continuity $\omega:\R^+\rightarrow\R^+$ only depending on $\alpha$, $\alpha'$ and $\Om$ such that for all $\xi\in C^{1,\alpha}(\R^N,\R^N)$ it holds
\begin{equation}\label{eq:vtmod}\|\widehat{v_\xi}-v_0\|_{C^{1,\alpha'}(\oOm)}\leq\omega(\|\xi\|_{(C^{1,\alpha}(\R^N))^N}).\end{equation}
Moreover, it holds
\begin{equation}\label{eq:lambdatmod}\lambda_\xi\rightarrow\lambda_0\text{ as }\|\xi\|_{(W^{1,\infty}(\R^N))^N}<1\text{ and }\|\xi\|_{(L^\infty(\R^N))^N}\rightarrow0.\end{equation}
\end{lemma}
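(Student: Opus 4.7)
The strategy is to pull the eigenvalue problem back from $\Om_\xi$ to the fixed domain $\Om$ via the diffeomorphism $\phi_\xi = \mathrm{Id}+\xi$, obtain a uniform $C^{1,\alpha}(\oOm)$ bound on the transported eigenfunction via Schauder theory with $C^{0,\alpha}$ coefficients, and then conclude using the compact embedding $C^{1,\alpha}(\oOm) \hookrightarrow\hookrightarrow C^{1,\alpha'}(\oOm)$. This parallels \cite[Lemma 4.8]{DL19}, where $W^{2,p}$ regularity of the diffeomorphism was used; here one has to work directly in divergence form since $D^2\xi$ is unavailable. Concretely, a change of variables in the weak formulation of $-\Delta v_\xi = \lambda_\xi v_\xi$ on $\Om_\xi$ shows that $w_\xi := \widehat{v_\xi} = v_\xi \circ \phi_\xi$ belongs to $H^1_0(\Om)$ and solves
\[-\mathrm{div}(A_\xi \nabla w_\xi) = \lambda_\xi J_\xi w_\xi \text{ in }\Om, \qquad w_\xi = 0 \text{ on }\partial\Om,\]
with $J_\xi := \det(D\phi_\xi)$ and $A_\xi := J_\xi (D\phi_\xi)^{-1}(D\phi_\xi)^{-T}$. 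For $\|\xi\|_{C^{1,\alpha}}$ small, $A_\xi$ is uniformly elliptic and $A_\xi, J_\xi$ are uniformly bounded in $C^{0,\alpha}(\oOm)$, with $A_\xi \to \mathrm{Id}$ and $J_\xi \to 1$ in $C^{0,\alpha}(\oOm)$ as $\xi \to 0$ in $C^{1,\alpha}$.

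From here I would first derive $\|w_\xi\|_{L^\infty(\Om)}\leq C$ uniformly via Moser iteration (uniformly elliptic equation, bounded RHS, normalization $\int J_\xi w_\xi^2 = 1$), then apply global Schauder estimates for divergence-form equations with $C^{0,\alpha}$ coefficients on the $C^{1,1}$ (hence $C^{1,\alpha}$) boundary $\partial\Om$, bootstrapped once so that the RHS $\lambda_\xi J_\xi w_\xi$ lies in $C^{0,\alpha}$; this yields $\|w_\xi\|_{C^{1,\alpha}(\oOm)}\leq C$ uniformly in $\xi$. Comparing Rayleigh quotients by pushing/pulling test functions through $\phi_\xi$ and using $A_\xi\to\mathrm{Id}$, $J_\xi\to 1$ in $C^0$ yields $\lambda_\xi\to \lambda_0$. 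Then compactness of $\{w_\xi\}$ in $C^{1,\alpha'}(\oOm)$ combined with passage to the limit in the weak formulation identifies every accumulation point as a nonnegative, $L^2$-normalized eigenfunction of $-\Delta$ on $\Om$ at eigenvalue $\lambda_0$; simplicity of $\lambda_0$ forces it to be $v_0$, so the whole family converges, and the modulus $\omega$ is extracted by the standard contradiction argument (otherwise some sequence $\xi_k\to 0$ in $C^{1,\alpha}$ with $\|w_{\xi_k}-v_0\|_{C^{1,\alpha'}}\geq\delta_0$ would contradict the convergence). For the second assertion \eqref{eq:lambdatmod}, only $\|\xi\|_{L^\infty}\to 0$ under a uniform $W^{1,\infty}$ bound is assumed, so $\Om_\xi\to\Om$ in Hausdorff distance within a uniformly Lipschitz family and $\lambda_\xi\to\lambda_0$ follows from the classical continuity of $\lambda_1$ under Hausdorff convergence in this regularity class (via Mosco convergence of the $H^1_0$-spaces).

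\textbf{Main obstacle.} The delicate point is the \emph{uniformity in $\xi$} of the Schauder estimate: the global $C^{1,\alpha}(\oOm)$ constant must depend only on $N$, $\alpha$, $\Om$ and an upper bound on $\|\xi\|_{C^{1,\alpha}}$, rather than on $\xi$ itself. Since $\xi$ is only $C^{1,\alpha}$ one cannot rewrite the PDE in non-divergence form and invoke $W^{2,p}$ estimates as in \cite{DL19}, so one is forced to rely on divergence-form Schauder theory and to control carefully how the boundary flattening of $\partial\Om$ interacts with $\phi_\xi$; once this uniform estimate is in place the remaining steps are essentially routine.
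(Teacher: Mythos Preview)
Your proposal is correct and follows essentially the same route as the paper: pull back to $\Om$ to obtain the divergence-form equation $-\mathrm{div}(A_\xi\nabla \widehat{v_\xi})=\lambda_\xi J_\xi \widehat{v_\xi}$, use a uniform $L^\infty$ bound on $\widehat{v_\xi}$ together with first-order Schauder estimates for divergence-form operators with $C^{0,\alpha}$ coefficients (the paper cites \cite[Theorems 8.33, 8.34]{GT}) to get a uniform $C^{1,\alpha}$ bound, and then run the Arzel\`a--Ascoli contradiction argument; \eqref{eq:lambdatmod} is handled exactly as you say via continuity of $\lambda_1$ under Hausdorff convergence in a uniformly Lipschitz class. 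One minor remark: no bootstrap is needed, since those Schauder theorems only require the right-hand side $\lambda_\xi J_\xi \widehat{v_\xi}$ to lie in $L^\infty$, not in $C^{0,\alpha}$.
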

To prove \eqref{eq:vtmod} we adapt the method used by \cite[Proposition 4.1]{DP00}, which is based on a compactness argument itself relying on a bound for an appropriate norm of $\widehat{v_\xi}$. 
\begin{proof} 
{\bf Proof of \eqref{eq:lambdatmod}.}
The condition on $\xi$ ensures that the $\Om_\xi$ are uniformly Lipschitz open sets, and the result therefore follows for instance from \cite[Theorem 2.3.18]{H06}. 

{\bf Proof of estimate \eqref{eq:vtmod}.} The proof is divided in two steps.

\textbf{Step 1: $C^{1,\alpha}$ bound of $\widehat{v_\xi}$.} Let us first prove that provided $\xi\in C^{1,\alpha}(\R^N,\R^N)$ verifies $\|\xi\|_{(C^{1,\alpha}(\R^N))^N}\leq C_1$ for some $C_1>0$ then it holds

\begin{equation}\label{eq:C1apha_vt}\|\widehat{v_\xi}\|_{C^{1,\alpha}(\oOm)}\leq C_2\end{equation} for some constant $C_2$ independent of $\xi$. 

This bound relies on standard elliptic estimates. In fact, the equation verified by $v_{\xi}$
\begin{equation}\label{eq:vt}\begin{cases}-\Delta v_{\xi}=\lambda_\xi v_{\xi}\ \text{ in } \Om_\xi,\\ v_{\xi}\in H^1_0(\Om_\xi).\end{cases}\end{equation} translates into the following  elliptic equation for $\widehat{v_\xi}$ 
\begin{equation}\begin{cases}-\text{div}(A_\xi\nabla \widehat{v_\xi})=\lambda_{\xi}J_\xi \widehat{v_\xi}\ \text{ over } \Om\\ \widehat{v_\xi}\in H^1_0(\Om)\end{cases} \text{where}\ \begin{cases} J_\xi:=\text{det}(\text{Id}+D\xi)\\ A_\xi:=J_\xi(\text{Id}+D\xi)^{-1}\left((\text{Id}+D\xi)^{-1}\right)^T\end{cases} \label{eq:AtJt}\end{equation}
We now apply first order Schauder estimates (see \cite[Theorems 8.33, 8.34]{GT}) to get 
\begin{equation}\label{eq:1storderS}\|\widehat{v_\xi}\|_{C^{1,\alpha}(\oOm)}\leq C\left(\|\widehat{v_\xi}\|_{L^{\infty}(\Om)}+\|\lambda_\xi J_\xi \widehat{v_\xi}\|_{L^{\infty}(\Om)}\right)\end{equation} where $C=C_N(\gamma_\xi,\|A_\xi\|_{(C^{0,\alpha}(\oOm))^{N\times N}})$ with $\gamma_\xi$ the ellipticity constant of $A_\xi$. Now, there exists $C>0$ such that for all $\xi$ it holds \begin{equation}\label{eq:coeffS}\|J_\xi\|_{L^\infty(\Om)}, \|A_\xi\|_{(C^{0,\alpha}(\oOm))^{N\times N}}\leq C\left(1+\|\xi\|_{(C^{1,\alpha}(\oOm))^N}\right).\end{equation}

By assuming that $\|\xi\|_{(W^{1,\infty}(\R^N))^N}\leq \delta$ for some $\delta$ small enough we can suppose that $\gamma_\xi\geq 1/2$, and also that $\Om_\xi$ contains a fixed ball for any $\xi$, thus ensuring that $\lambda_\xi$ is bounded thanks to the monotonicity of $\lambda_1$. Moreover, we have the $L^\infty$ bound (see \cite[Example 2.1.8]{Dav})
\begin{equation}\label{eq:vtLinfty}\|\widehat{v_\xi}\|_{L^{\infty}(\Om)}=\|v_\xi\|_{L^{\infty}(\Om_\xi)}\leq e^{1/8\pi}\lambda_\xi^{N/4}.\end{equation}
Inserting \eqref{eq:coeffS}
and \eqref{eq:vtLinfty} into \eqref{eq:1storderS} provides the desired estimate \eqref{eq:C1apha_vt}.

{\bf Step 2.} We proceed by contradiction, therefore assuming that there exists $\eps_0>0$ and a sequence $\|\xi_j\|_{(C^{1,\alpha}(\R^N))^N}\rightarrow0$ such that 
\begin{equation}\label{eq:lemmamod_abs}\forall j\geq0 ,\ \|\widehat{v_{\xi_j}}-v_0\|_{C^{1,\alpha'}(\oOm)}\geq\eps_0\end{equation}
Thanks to the bound \eqref{eq:C1apha_vt} we can use the Arzela-Ascoli theorem to infer the existence of $v\in C^{1,\alpha}(\oOm)$ such that up to subsequence
\[\widehat{v_{\xi_j}}\rightarrow v \text{ in } C^{1,\alpha'}({\oOm})\] 
Now, since $\|\xi_j\|_{C^{1,\alpha}(\R^N,\R^N)}\rightarrow0$ we have that $A_{\xi_j}$ and $J_{\xi_j}$ go respectively to $\text{Id}$ and $1$ in $C^{0,\alpha}(\oOm)$, and furthermore $\lambda_{\xi_j}\rightarrow\lambda_0$ (thanks to \eqref{eq:lambdatmod}). We can therefore pass to the limit in the sense of distribution in \eqref{eq:AtJt} to get 
\[\begin{cases}-\Delta v=\lambda_0 v,\ \text{ in } \Om\\ v\in H^1_0(\Om)
\end{cases}\]
Now, since we also have $v\geq0$ and $\|v\|_{L^2(\Om)}=1$ we deduce that $v=v_0$, which enters in contradiction with \eqref{eq:lemmamod_abs}. This concludes the proof of estimate \eqref{eq:vtmod} and hence the proof of the Lemma.

\end{proof}

In the following Lemma we prove continuity type-estimates of several geometric quantities associated to a set $\Om$. Let us recall that most notations of the Lemma have been set at the beginning of Section \ref{sect:contOm} ($n_\xi$, $\widetilde{J_{\xi}}$, $H_\xi$, $b_\xi$ and so on). We also denote  $\alpha_\xi:=n_\xi\cdot n_0$ and $\beta_\xi:=\alpha_\xi n_\xi-n_0$. 
\begin{lemma}\label{lem:geom}Let $\Om$ be a $C^{1,1}$ bounded open set. For any $\alpha\in(0,1)$ there exists $C=C(\alpha)>0$ and $\delta=\delta(\alpha)>0$ independent of $\xi\in C^{1,\alpha}(\R^N,\R^N)$ such that if $\|\xi\|_{C^{1,\alpha}(\R^N,\R^N)}\leq \delta$ then 
\begin{itemize}[label=\textbullet]
\item $\|\widetilde{J_{\xi}}-1\|_{C^{0,\alpha}(\partial \Om)}\leq C\|\xi\|_{(C^{1,\alpha}(\R^N))^N}$.
\item $\|\widehat{n_\xi}-n_0\|_{C^{0,\alpha}(\partial\Om)}\leq C\|\xi\|_{(C^{1,\alpha}(\R^N))^N}$, $\|\widehat{\alpha_\xi}-1\|_{C^{0,\alpha}(\partial\Om)}\leq C\|\xi\|_{(C^{1,\alpha}(\R^N))^N}$, $\|\widehat{\beta_\xi}\|_{C^{0,\alpha}(\partial \Om)}\leq C\|\xi\|_{(C^{1,\alpha}(\R^N))^N}$.
\end{itemize}
Let $\alpha\in(0,1)$ and $1-\alpha<s<1$. Let $p\in(1,\infty)$ and denote by $p'$ its conjugate exponent. There exists $\delta>0$ such that if
$\xi\in C^{1,1}(\R^N,\R^N)$ with $\|\xi\|_{(C^{1,\alpha}(\R^N))^N}\leq \delta$, we have the following expansions
\[\widehat{H_\xi}-H_0=\omega_{s,p,\alpha}(\xi), \ \ \ \widehat{b_\xi}-b_0=\omega_{s,p,\alpha}(\xi),\]
\[\widehat{\nabla_{\tau_\xi}\alpha_\xi}=\omega_{s,p,\alpha}(\xi)\]
where the notation $\omega_{s,p,\alpha}(\xi)$ means that there exists $a_{1,\xi}$, $a_{2,\xi}$, $b_{1,\xi}$, $b_{2,\xi}$  (independent of $s$, $p$ and $\alpha$) such that $\omega_{s,p \alpha}(\xi)=a_{1,\xi}b_{1,\xi}+a_{2,\xi}b_{2,\xi}$ with 
\begin{eqnarray*}&\|a_{1,\xi}\|_{W^{-s,p'}(\partial\Om)}\leq C\|\xi\|_{(C^{1,\alpha}(\R^N))^N}, &\|b_{1,\xi}\|_{C^{0,\alpha}(\partial\Om)}\leq C,\\
\\ &\|a_{2,\xi}\|_{W^{-s,p'}(\partial\Om)}\leq C, &\|b_{2,\xi}\|_{ C^{0,\alpha}(\partial\Om)}\leq C\|\xi\|_{(C^{1,\alpha}(\R^N))^N}.\end{eqnarray*} 
\end{lemma}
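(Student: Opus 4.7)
The explicit pullback formulas
\[
\widetilde{J_\xi} = \det(I + D\xi)\,\bigl|(I + D\xi)^{-T} n_0\bigr|, \qquad \widehat{n_\xi} = \frac{(I + D\xi)^{-T} n_0}{\bigl|(I + D\xi)^{-T} n_0\bigr|}
\]
exhibit these quantities as smooth functions of $D\xi$ (valued in $N\times N$ matrices, on a neighborhood of $0$) applied to the fixed field $n_0 \in C^{0,1}(\partial\Om) \subset C^{0,\alpha}(\partial\Om)$. Since $C^{0,\alpha}(\partial\Om)$ is a Banach algebra on which smooth Nemytskii operators act continuously, and $\|D\xi\|_{C^{0,\alpha}(\partial\Om)} \leq C\|\xi\|_{C^{1,\alpha}(\R^N)}$, the first two bullets will follow immediately. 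The bullets on $\widehat{\alpha_\xi} = \widehat{n_\xi}\cdot n_0$ and $\widehat{\beta_\xi} = \widehat{\alpha_\xi}\widehat{n_\xi} - n_0$ then reduce to the algebra property of $C^{0,\alpha}$.

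\textbf{Decomposition of $\widehat{b_\xi} - b_0$.} For the second part, I start from $b_\xi = (Dn_\xi)P_{\tau_\xi}$ with $P_{\tau_\xi} = I - n_\xi\otimes n_\xi$ and apply the chain-rule identity $(Dn_\xi)\circ\phi_\xi = D\widehat{n_\xi}\cdot D\phi_\xi^{-1}$ to obtain $\widehat{b_\xi} = D\widehat{n_\xi}\,D\phi_\xi^{-1}\,\widehat{P_{\tau_\xi}}$. Adding and subtracting $D\widehat{n_\xi}\,P_{\tau_0}$ and comparing with $b_0 = Dn_0\,P_{\tau_0}$ yields the splitting
\[
\widehat{b_\xi} - b_0 \;=\; D(\widehat{n_\xi} - n_0)\cdot P_{\tau_0} \;+\; D\widehat{n_\xi}\cdot\bigl[D\phi_\xi^{-1}\widehat{P_{\tau_\xi}} - P_{\tau_0}\bigr],
\]
from which I will read off $a_{1,\xi} := D(\widehat{n_\xi} - n_0)$, $b_{1,\xi} := P_{\tau_0}$, $a_{2,\xi} := D\widehat{n_\xi}$, and $b_{2,\xi} := D\phi_\xi^{-1}\widehat{P_{\tau_\xi}} - P_{\tau_0}$. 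The factor $b_{1,\xi}$ is bounded and $b_{2,\xi}$ is small in $C^{0,\alpha}(\partial\Om)$ by the first part combined with the algebra property (the bracket vanishes at $\xi=0$). For the distributional factors $a_{i,\xi}$, I will invoke the embedding $C^{0,\alpha}(\partial\Om) \hookrightarrow W^{1-s,p'}(\partial\Om)$, valid precisely because $s > 1-\alpha$ and $p' < \infty$: this places each gradient $D(\widehat{n_\xi} - n_0)$ and $D\widehat{n_\xi}$ into $W^{-s,p'}(\partial\Om)$ with, respectively, a small and a bounded norm of the required sizes. Taking the trace then delivers the expansion for $\widehat{H_\xi} - H_0$.

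\textbf{The tangential-gradient term and the main obstacle.} Applied to the scalar $\alpha_\xi$, the same chain rule gives $\widehat{\nabla_{\tau_\xi}\alpha_\xi} = \widehat{P_{\tau_\xi}}\,D\phi_\xi^{-T}\,\nabla\widehat{\alpha_\xi}$. Splitting $\widehat{P_{\tau_\xi}}D\phi_\xi^{-T} = P_{\tau_0} + (\widehat{P_{\tau_\xi}}D\phi_\xi^{-T} - P_{\tau_0})$ and writing $\nabla\widehat{\alpha_\xi} = \nabla(\widehat{\alpha_\xi} - 1)$, then using the first part to say that $\widehat{\alpha_\xi} - 1$ is small in $C^{0,\alpha}$, yields an analogous splitting. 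The main delicate point throughout is that, because $\xi$ is only $C^{1,\alpha}$, the ``$D$'' appearing in $\widehat{b_\xi}$, $\widehat{H_\xi}$ and $\widehat{\nabla_{\tau_\xi}\alpha_\xi}$ is a distributional derivative of a $C^{0,\alpha}$ quantity and cannot be estimated in $L^\infty$ in terms of $\|\xi\|_{C^{1,\alpha}}$ alone: one must pair each such ``bad'' factor with a regular companion, and measure smallness in the dual Sobolev scale $W^{-s,p'}$. This is the entire reason for the hypothesis $1-\alpha < s$ and for the particular product structure of $\omega_{s,p,\alpha}(\xi)$ in the statement. The one bookkeeping check to perform is that $b_{2,\xi}$ is genuinely small in the strict $C^{0,\alpha}$-norm (not merely $L^\infty$), which is handled by the Banach algebra property applied to the individually small and bounded factors $\widehat{P_{\tau_\xi}} - P_{\tau_0}$ and $D\phi_\xi^{-1} - I$, controlled respectively by the first part and by smooth dependence on $D\xi$.
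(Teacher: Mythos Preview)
Your approach is correct and follows essentially the same strategy as the paper: the Banach-algebra property of $C^{0,\alpha}$ handles the first-order quantities, and for the second-order ones the ``bad'' derivative factor is placed in $W^{-s,p'}$ via the embedding $C^{0,\alpha}\hookrightarrow W^{1-s,p'}$ (using $s>1-\alpha$) followed by the differentiation rule $\nabla_\tau:W^{1-s,p'}\to W^{-s,p'}$, while the companion factor is kept in $C^{0,\alpha}$.

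The only organizational difference is that the paper introduces a level-set function $w$ with $\Om=\{w<0\}$ and writes $n_\xi=a(\psi_\xi)$ with $a(x)=x/|x|$ and $\psi_\xi=D\phi_\xi^{-T}\nabla w\circ\phi_\xi^{-1}$; this produces an explicit ambient extension of the normal and lets the paper expand $\widehat{H_\xi}$ in coordinates, isolating terms $d_{ijk}\,\partial_{ij}\xi_k$ and $d'_{ij}\,\partial_{ij}w$ directly. Your intrinsic pullback $\widehat{b_\xi}=D\widehat{n_\xi}\,D\phi_\xi^{-1}\,\widehat{P_{\tau_\xi}}$ is cleaner, and the key observation that $D\phi_\xi^{-1}\widehat{P_{\tau_\xi}}$ has range in $T\partial\Om$ (so that $a_{2,\xi}$ really is a tangential derivative on $\partial\Om$ and hence lands in $W^{-s,p'}(\partial\Om)$) is implicit in your write-up but would benefit from being stated. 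Either bookkeeping leads to the same $a_{1,\xi}b_{1,\xi}+a_{2,\xi}b_{2,\xi}$ structure with identical norm controls.
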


\begin{proof}All of these estimates rely on an appropriate expression for $n_\xi$. Following \cite[Lemma 4.3]{DL19} we write $\Om=\{w<0\}$ for some $w\in C^{1,1}(\R^N)$ with $\nabla w$ not vanishing in a neighborhood of $\partial \Om$, so that $\Om_\xi=\{w\circ \phi_\xi^{-1}<0\}$ and 
\[n_\xi=\frac{\nabla (w\circ \phi_\xi^{-1})}{|\nabla (w\circ \phi_\xi^{-1})|}=\frac{D\phi_\xi^{-T}\nabla w(\phi_\xi^{-1})}{|D\phi_\xi^{-T}\nabla w(\phi_\xi^{-1})|}.\] 

Notice that $n_\xi$ only involves $\nabla w\in C^{0,1}(\partial \Om)$ and $D\xi\in C^{0,\alpha}(\R^N,\R^N)$. As a consequence, expanding the maps $A\mapsto(A^{-1})^T$ and $A\mapsto \text{det}(A)$ around $\text{Id}$ and $y\mapsto |y|$ around $n_0$ we get in fact $\|\widetilde{J_{\xi}}-1\|_{C^{0,\alpha}(\partial \Om)}\leq C\|\xi\|_{(C^{1,\alpha}(\R^N))^N}$. 
As for $\widehat{n_\xi}$, $\widehat{\alpha_\xi}$ and $\widehat{\beta_\xi}$, we expand $x\mapsto \frac{x}{|x|}$ around $\frac{\nabla w}{|\nabla w|}$ and get likewise the announced estimates.

The case of $\widehat{H_\xi}$, $\widehat{b_\xi}$ and $\widehat{\nabla_{\tau_\xi}\alpha_\xi}$ is more involved as the second derivatives of $\xi$ and $w$ come into play. As the argument is analogous in the three cases we only prove the estimate for $\widehat{H_\xi}$. Write $a(x):=\frac{x}{|x|}$ and $\psi_\xi:=D\phi_\xi^{-T}\nabla w(\phi_\xi^{-1})$. Then
\[\widehat{H_\xi}=\text{div}(a\circ\psi_\xi)\circ \phi_\xi=Da(\psi_\xi\circ\phi_\xi):D\psi_\xi^T(\phi_\xi)\]
where $:$ is the matrix dot product. In particular, one has
\[H_0=\text{div}(a\circ \nabla w)=Da(\nabla w):D^2 w.\]
Writing $z_\xi=(D\phi_\xi^{-T}(\phi_\xi)-\text{Id})\nabla w$, we let $c_1:=D\psi_\xi^{T}(\phi_\xi)-D^2w$ and $c_2:=Da(\nabla w+z_\xi)-Da(\nabla w)$. We therefore rewrite $\widehat{H_\xi}=(Da(\nabla w)+c_2):(D^2w+c_1)$ and we thus want to estimate
\begin{align}
    \nonumber \widehat{H_\xi}-H_0&=(Da(\nabla w)+c_2):(D^2w+c_1)-Da(\nabla w):D^2 w\\
    &=Da(\nabla w):c_1+c_2:D^2w+c_2:c_1.\label{eq:Hxi}
    \end{align}

By expanding $c_1$ at $\xi=0$, we see that it is a sum of terms of the form $d_{ijk}\partial_{ij}\xi_k$ where $d_{ijk}$ only involves first derivatives of $w$ and $\xi$, and of terms of the form $d'_{ij}\partial_{ij}w$ where $d'_{ij}$ only involves first derivatives of $w$ and $\xi$ with $\|d'_{ij}\|_{C^{0,\alpha}(\partial\Om)}\leq C \|\xi\|_{(C^{1,\alpha}(\R^N))^N}$. 
Using \ref{prop:difflaw} and the embedding $C^{0,\alpha}(\partial\Om)\subset W^{1-s,p'}(\partial\Om)$ from \ref{prop:Sobinj} of Proposition \ref{prop:factFspq} (note that we have in fact $\alpha>1-s$),  there exists $C>0$ such that
\begin{align*}\|\partial_{ij}\xi_k\|_{W^{-s,p'}(\partial \Om)}&\leq C \|\nabla \xi_k\|_{W^{1-s,p'}(\partial \Om)}\\
&\leq C\|\xi\|_{(C^{1,\alpha}(\R^N))^N}.
\end{align*}
On the other hand, it holds $\|d_{ijk}\|_{C^{0,\alpha}(\partial\Om)}\leq C$. Proceeding likewise for the terms $d_{ij}'\partial_{ij}w$ we deduce that $c_1$ has the form $c_1=\omega_{s,p,\alpha}(\xi)$.

We now expand $c_2$ at $\xi=0$: one has 

\[c_2=Da(\nabla w+z_\xi)-Da(\nabla w)=\int_0^1D^2a(\nabla w+tz_\xi)\cdot z_\xi dt\]
As $\|z_\xi\|_{(C^{0,\alpha}(\R^N))^N}\leq C \|\xi\|_{(C^{1,\alpha}(\R^N))^N}$, using the same ideas we get
\begin{align*}
\left\|\int_0^1D^2a(\nabla w+tz_\xi)\cdot z_\xi dt\right\|_{C^{0,\alpha}(\partial\Om)}&\leq C\left\|\int_0^1D^2a(\nabla w+tz_\xi)dt\right\|_{C^{0,\alpha}(\partial\Om)}\|z_\xi\|_{(C^{0,\alpha}(\R^N))^N}\\
&\leq C\left(\|w\|_{C^{1,\alpha}(\partial\Om)}+\|z_\xi\|_{(C^{0,\alpha}(\R^N))^N}\right) \|z_\xi\|_{(C^{0,\alpha}(\R^N))^N}\\
&\leq C\|\xi\|_{(C^{1,\alpha}(\R^N))^N}
\end{align*}
for some $C>0$. As a consequence $\|c_2\|_{C^{0,\alpha}(\partial\Om)}\leq C\|\xi\|_{(C^{1,\alpha}(\R^N))^N}$.

Since $c_1=\omega_{s,p,\alpha}(\xi)$ and $\|c_2\|_{C^{0,\alpha}(\partial\Om)}\leq C\|\xi\|_{(C^{1,\alpha}(\R^N))^N}$, we deduce from \eqref{eq:Hxi} the announced expansion for $\widehat{H_\xi}$, thus finishing the proof of the Lemma.
\end{proof}

We now prove a final preparatory Lemma, which consists in proving a continuity estimate in terms of $\xi$ and $\theta$ for the $H^1$ norm of the derivative of the first Dirichlet eigenfunction on $(\text{Id}+t\theta)(\Om_\xi)$.

\begin{lemma}\label{lem:est_v'} Let $\Om$ be a $C^{1,1}$ bounded open set and let $\alpha\in(\frac{1}{2},1)$. For any $\xi\in C^{1,\alpha}\left(\R^N,\R^N\right)$ and $\theta\in C^{1,\alpha}\left(\R^N,\R^N\right)$ we let $v_{\xi,\theta}'$ be the derivative at $0$ of the map $t\mapsto v_{(\text{Id}+t\theta)(\Om_\xi)}\in L^2(\R^N)$, where $v_{(\text{Id}+t\theta)(\Om_\xi)}$ denotes the first Dirichlet eigenfunction on $(\text{Id}+t\theta)(\Om_\xi)$. Then there exists a modulus of continuity $\omega$ such that for all $\xi$ and $\theta$ with $\|\xi\|_{(C^{1,\alpha}(\R^N))^N}$ and $\|\theta\|_{(C^{1,\alpha}(\R^N))^N}$ sufficiently small it holds
\begin{equation}
    \label{eq:est_v0theta'}
\|v_{0,\theta}'\|_{H^1(\Om)}\leq C\|\theta\|_{(H^{1/2}(\partial\Om))^N}
\end{equation}
and
\begin{equation}\label{eq:estvt'}\|\widehat{v_{\xi,\theta}'}-v_{0,\theta}'\|_{H^1(\Om)}= \omega_{C^{1,\alpha},H^{1/2}}\left(\xi,\theta\right)\end{equation} 
where 
\[\omega_{C^{1,\alpha},H^{1/2}}\left(\xi,\theta\right):= \omega(\|\xi\|_{(C^{1,\alpha}(\R^N))^N})\|\theta\|_{(H^{1/2}(\partial\Om))^N}+\omega(\|\theta\|_{(C^{1,\alpha}(\R^N))^N})\|\xi\|_{(H^{1/2}(\partial\Om))^N}.\]
\end{lemma}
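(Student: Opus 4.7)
The plan is to work from the classical Hadamard characterization of the shape derivative of the first Dirichlet eigenfunction, and then carry out an elliptic stability analysis, first at $\xi=0$ to establish \eqref{eq:est_v0theta'} and next on the reference domain $\Om$ to control the difference in \eqref{eq:estvt'}. By standard shape calculus (see \cite{H06}), $v_{\xi,\theta}'$ is characterized in $H^1(\Om_\xi)$ by
\[
\begin{cases}
-\Delta v_{\xi,\theta}'-\lambda_\xi v_{\xi,\theta}' = \lambda_{\xi,\theta}' v_\xi & \text{in }\Om_\xi,\\
v_{\xi,\theta}' = -(\theta\cdot n_\xi)\partial_{n_\xi}v_\xi & \text{on }\partial\Om_\xi,\\
\int_{\Om_\xi}v_\xi v_{\xi,\theta}' = 0,
\end{cases}
\]
where $\lambda_{\xi,\theta}'=-\int_{\partial\Om_\xi}(\theta\cdot n_\xi)(\partial_{n_\xi}v_\xi)^2$ is the eigenvalue correction enforcing the Fredholm compatibility condition.

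For \eqref{eq:est_v0theta'}, I would set $\xi=0$. Since $\Om$ is $C^{1,1}$ the regularity $v_0\in H^2(\Om)$ with $\partial_{n_0}v_0\in C^{0,\alpha}(\partial\Om)$ is standard, and combined with $n_0\in C^{0,1}(\partial\Om)$ and the hypothesis $\alpha>1/2$, the pointwise multiplier property of $C^{0,\alpha}(\partial\Om)$ on $H^{1/2}(\partial\Om)$ yields $\|(\theta\cdot n_0)\partial_{n_0}v_0\|_{H^{1/2}(\partial\Om)}\leq C\|\theta\|_{(H^{1/2}(\partial\Om))^N}$. Lifting this trace to $w_\theta\in H^1(\Om)$ with $\|w_\theta\|_{H^1}\leq C\|\theta\|_{H^{1/2}}$, and using $|\lambda_{0,\theta}'|\leq C\|\theta\|_{L^2(\partial\Om)}\leq C\|\theta\|_{H^{1/2}}$, the Fredholm alternative for the operator $-\Delta-\lambda_0$ restricted to the orthogonal complement of $v_0$ (where the normalization places $v_{0,\theta}'$) delivers the sought bound.

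For \eqref{eq:estvt'}, the strategy is to pull back the equation for $v_{\xi,\theta}'$ to $\Om$ via $\phi_\xi$: with $A_\xi,J_\xi$ as in \eqref{eq:AtJt}, $\widehat{v_{\xi,\theta}'}$ solves
\[
-\text{div}(A_\xi\nabla\widehat{v_{\xi,\theta}'})-\lambda_\xi J_\xi\widehat{v_{\xi,\theta}'}=\lambda_{\xi,\theta}'J_\xi\widehat{v_\xi}\ \text{in }\Om,\quad \widehat{v_{\xi,\theta}'}=-\widehat{(\theta\cdot n_\xi)}\widehat{\partial_{n_\xi}v_\xi}\ \text{on }\partial\Om,
\]
and the difference $e:=\widehat{v_{\xi,\theta}'}-v_{0,\theta}'$ satisfies a perturbed elliptic problem $-\Delta e-\lambda_0 e = F$ in $\Om$ with Dirichlet trace $\Psi$ on $\partial\Om$. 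Here $F$ collects the coefficient perturbations $(A_\xi-\text{Id})\nabla\widehat{v_{\xi,\theta}'}$, $(\lambda_\xi J_\xi-\lambda_0)\widehat{v_{\xi,\theta}'}$ and the eigenvalue correction difference, all controlled in $H^{-1}(\Om)$ using Lemmas \ref{th:lemma_vt_mod} and \ref{lem:geom}, while \eqref{eq:est_v0theta'} applied on $\Om_\xi$ (with constants uniform in $\xi$) provides the a priori bound on $\widehat{v_{\xi,\theta}'}$. The trace $\Psi=(\theta\cdot n_0)\partial_{n_0}v_0-\widehat{(\theta\cdot n_\xi)}\widehat{\partial_{n_\xi}v_\xi}$ I would split as $\theta\cdot(n_0-\widehat{n_\xi})\partial_{n_0}v_0+\widehat{(\theta\cdot n_\xi)}(\partial_{n_0}v_0-\widehat{\partial_{n_\xi}v_\xi})$, controlling each factor via the $C^{0,\alpha}$-smallness of $\widehat{n_\xi}-n_0$ (Lemma \ref{lem:geom}) and of $\nabla\widehat{v_\xi}-\nabla v_0$ (Lemma \ref{th:lemma_vt_mod}), together with the $H^{1/2}$-multiplier estimate. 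Standard $H^1$ elliptic estimates then yield $\|e\|_{H^1(\Om)}\leq\omega(\|\xi\|_{C^{1,\alpha}})\|\theta\|_{H^{1/2}}$ for a suitable modulus $\omega$, which is a fortiori dominated by $\omega_{C^{1,\alpha},H^{1/2}}(\xi,\theta)$.

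The main obstacle will be to make the $H^{1/2}$ estimates on the trace $\Psi$ rigorous: the hypothesis $\alpha>1/2$ is precisely the threshold for $C^{0,\alpha}$ to act as a bounded pointwise multiplier on $H^{1/2}(\partial\Om)$. A secondary subtlety is that after pull-back, the normalization $\int_{\Om_\xi}v_\xi v_{\xi,\theta}'=0$ becomes $\int_\Om J_\xi\widehat{v_\xi}\widehat{v_{\xi,\theta}'}=0$, which deviates from $v_0$-orthogonality by a quantity of order $\|\xi\|_{C^{1,\alpha}}$; this residue must be absorbed as a small correction via the projection onto $\text{span}(v_0)$ when invoking Fredholm, and contributes to the final modulus.
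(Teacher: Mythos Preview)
Your overall architecture---characterize $v_{\xi,\theta}'$ by its PDE, pull back to $\Om$, and estimate the difference by elliptic stability plus Fredholm on $\{v_0\}^\perp$---matches the paper's, which organizes the same analysis through a harmonic extension decomposition $v_{\xi,\theta}'=-\pi_\xi\mathbf{H}_{\xi,\theta}+w_{\xi,\theta}$. The gap is in your treatment of the boundary trace.

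Your splitting of $\Psi$ is algebraically wrong. Recall that $\widehat{\theta\cdot n_\xi}(x)=\theta(\phi_\xi(x))\cdot n_\xi(\phi_\xi(x))=\widehat{\theta}(x)\cdot\widehat{n_\xi}(x)$, not $\theta(x)\cdot\widehat{n_\xi}(x)$. Hence
\[
\Psi=(\theta\cdot n_0)\partial_{n_0}v_0-(\widehat{\theta}\cdot\widehat{n_\xi})\widehat{\partial_{n_\xi}v_\xi},
\]
and any correct decomposition produces, in addition to the two pieces you wrote, the residual $(\theta-\widehat{\theta})\cdot\widehat{n_\xi}\,\widehat{\partial_{n_\xi}v_\xi}$ (or an equivalent term). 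This is exactly the piece that forces the \emph{second} summand $\omega(\|\theta\|_{C^{1,\alpha}})\|\xi\|_{H^{1/2}}$ in the modulus: writing $\widehat{\theta}-\theta=\int_0^1\nabla\theta(\cdot+t\xi)\cdot\xi\,dt$ and applying the $C^{0,\alpha}\cdot H^{1/2}\subset H^{1/2}$ multiplier estimate (with $\alpha>\tfrac12$) yields
\[
\|\widehat{\theta}-\theta\|_{H^{1/2}(\partial\Om)}\leq C\|\theta\|_{(C^{1,\alpha})^N}\|\xi\|_{(H^{1/2}(\partial\Om))^N},
\]
which is the bound the paper records. There is no evident way to absorb this term into $\omega(\|\xi\|_{C^{1,\alpha}})\|\theta\|_{H^{1/2}}$; the roles of $\xi$ and $\theta$ are genuinely exchanged. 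Consequently your asserted one-sided conclusion $\|e\|_{H^1}\leq\omega(\|\xi\|_{C^{1,\alpha}})\|\theta\|_{H^{1/2}}$ is too strong and the final ``a fortiori'' remark does not rescue it---it is the missing term that \emph{creates} the symmetric form $\omega_{C^{1,\alpha},H^{1/2}}(\xi,\theta)$.

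A secondary point: when you invoke \eqref{eq:est_v0theta'} ``on $\Om_\xi$ with uniform constants'' to bound $\widehat{v_{\xi,\theta}'}$ a priori, note that the resulting control is in terms of $\|\theta\|_{H^{1/2}(\partial\Om_\xi)}$, and transporting this back to $\partial\Om$ again involves $\widehat{\theta}$ versus $\theta$. The paper sidesteps this by estimating $\widehat{\mathbf{H}_{\xi,\theta}}$ and $\widehat{w_{\xi,\theta}}$ directly against their $\xi=0$ counterparts, never needing an a priori bound on $\widehat{v_{\xi,\theta}'}$ alone; this is a cleaner route once you fix the trace decomposition.
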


\begin{proof}

We denote $\lambda_{\xi,\theta}' :=(\lambda_1)_{\Om_\xi}'(0)\cdot(\theta)$. It is classical (see for instance \cite[Theorem 5.3.1]{HP18}) that the functions $v_{\xi,\theta}'$ satisfies the following equations
\[\begin{cases}-\Delta v_{\xi,\theta}'=\lambda_{\xi}v_{\xi,\theta}'+\lambda'_{\xi,\theta}v_\xi,\text{ in }\Om_\xi\\ v_{\xi,\theta}'=-(\partial_{n_\xi}v_\xi)\theta\cdot n_\xi,\ \text{ over } \partial \Om_\xi\\ \int_{\Om_\xi}v_{\xi,\theta}'v_\xi=0\end{cases}.\]
Let $\textbf{H}_{\xi,\theta}$ be the harmonic extension on $\Om_\xi$ of $(\partial_{n_\xi}v_\xi)\theta\cdot n_\xi$.
Then recalling the expression of $\lambda_{\xi,\theta}'$ (see for instance \cite[Section 5.9.3]{HP18}) we can write 
\[\lambda_{\xi,\theta}'=-\int_{\partial \Om_\xi}(\partial_{n_\xi}v_\xi)^2\theta\cdot n_\xi=-\int_{\partial\Om_\xi}(\partial_{n_\xi}v_\xi)\textbf{H}_{\xi,\theta}=\lambda_\xi\int_{\Om_\xi}v_\xi\textbf{H}_{\xi,\theta}\]
where we used Green's formula and $\int_{\Om_\xi}\nabla v_\xi\nabla\textbf{H}_{\xi,\theta}=0$. We decompose $v_{\xi,\theta}'=-\pi_{\xi}\textbf{H}_{\xi,\theta}+w_{\xi,\theta}$ 
where $\pi_{\xi}$ 
is the orthogonal projection onto $\{ v_{\xi}\}^{\perp}$ for the $L^2$ scalar product on $L^2(\Om_\xi)$. Thanks to the above expression for $\lambda_{\xi,\theta}'$ we know that

$w_{\xi,\theta}$ solves
\begin{equation}\begin{cases}(-\Delta-\lambda_\xi)w_{\xi,\theta}=-\lambda_\xi\pi_\xi\textbf{H}_{\xi,\theta},\text{ in } \Om_{\xi}\\ w_{\xi,\theta}=0,\text{ over } \partial \Om_{\xi}\\ \int_{\Om_{\xi}}v_\xi w_{\xi,\theta}=0.\end{cases}, 
\label{eq:pdewt}\end{equation}
We estimate separately $\textbf{H}_{\xi,\theta}$ and $w_{\xi,\theta}$. 

{\bf Estimate of $\textbf{H}_{\xi,\theta}$.} Since ${\textbf{H}_{\xi,\theta}}$ is harmonic inside $\Om_{\xi}$, $\widehat{\textbf{H}_{\xi,\theta}}$ satisfies $-\text{div}(A_\xi\nabla \widehat{\textbf{H}_{\xi,\theta}})=0$ in $\Om$, where $A_{\xi}$ was defined in \eqref{eq:AtJt}. As a consequence, we have that \[\Delta(\widehat{\textbf{H}_{\xi,\theta}}-\textbf{H}_{0,\theta})=\Delta\widehat{\textbf{H}_{\xi,\theta}}=-\text{div}((A_\xi-\text{Id})\nabla\widehat{\textbf{H}_{\xi,\theta}})\] so that by using standard elliptic estimates (see for instance \cite[Corollary 8.7]{GT} combined with a trace estimate from \cite[Proposition 3.31]{DD13}) we get
\begin{equation}\label{eq:Harmonic_two}\|\widehat{\textbf{H}_{\xi,\theta}}-\textbf{H}_{0,\theta}\|_{H^1(\Om)}\leq C_N\left(\|(A_\xi-\text{Id})\nabla \widehat{{\textbf{H}_{\xi,\theta}}}\|_{L^2(\Om)}+ \|\widehat{\textbf{H}_{\xi,\theta}}-\textbf{H}_{0,\theta}\|_{H^{1/2}(\partial \Om)}\right)\end{equation} for some $C_N>0$. 
There exists $C>0$ such that
\begin{align}\nonumber\|(A_\xi-\text{Id})\nabla \widehat{\textbf{H}_{\xi,\theta}}\|_{L^2(\Om)}&\leq \|A_\xi-\text{Id}\|_{L^{\infty}(\Om)}\left(\|\nabla \widehat{\textbf{H}_{\xi,\theta}}-\nabla \textbf{H}_{0,\theta}\|_{L^2(\Om)}+\|\nabla \textbf{H}_{0,\theta}\|_{L^2(\Om)}\right)\\&\leq C \|\xi\|_{(W^{1,\infty}(\R^N))^N}\left(\|\widehat{\textbf{H}_{\xi,\theta}}- \textbf{H}_{0,\theta}\|_{H^1(\Om)}+\|\theta\cdot n_0\|_{H^{1/2}(\partial \Om)}\right)\nonumber\end{align}
where we used that \[\|\nabla \textbf{H}_{0,\theta}\|_{L^2(\Om)}\leq \| \textbf{H}_{0,\theta}\|_{H^{1/2}(\partial \Om)}=\|\partial_{n_0}v_0(\theta\cdot n_0)\|_{H^{1/2}(\partial \Om)}\leq C \|\theta\cdot n_0\|_{H^{1/2}(\partial \Om)}\] using $v_0\in C^{1,\alpha}(\oOm)$ and the product law $C^{0,\alpha}(\partial\Om)\cdot H^{1/2}(\partial \Om)\subset H^{1/2}(\partial\Om)$ thanks to $\alpha>\frac{1}{2}$ (see \ref{prop:prodCalpha} of Proposition \ref{prop:factFspq}). Assuming that $\|\xi\|_{(W^{1,\infty}(\R^N))^N}\leq\frac{1}{2C_NC}$ we thus get from \eqref{eq:Harmonic_two} that there exists $C>0$ such that
\begin{equation}\label{eq:estHt_1st}\|\widehat{\textbf{H}_{\xi,\theta}}-\textbf{H}_{0,\theta}\|_{H^1(\Om)}\leq C\left(\|\xi\|_{(W^{1,\infty}(\R^N))^N}\|\theta\|_{(H^{1/2}(\partial\Om))^N}+\|\widehat{\textbf{H}_{\xi,\theta}}-\textbf{H}_{0,\theta}\|_{H^{1/2}(\partial \Om)}\right).\end{equation}
Now, to estimate $\|\widehat{\textbf{H}_{\xi,\theta}}-\textbf{H}_{0,\theta}\|_{H^{1/2}(\partial \Om)}=\|\widehat{\partial_{n_\xi}v_\xi}(\widehat{\theta\cdot n_\xi})-\partial_{n_0}v_0(\theta\cdot n_0)\|_{H^{1/2}(\partial \Om)}$, 
we write
\begin{equation}\label{eq:decomplem}\widehat{\partial_{n_\xi}v_\xi}(\widehat{\theta\cdot n_\xi})-\partial_{n_0}v_0(\theta\cdot n_0)=
\widehat{\partial_{n_\xi}v_\xi}\left((\widehat{\theta\cdot n_\xi})-(\theta\cdot n_0)\right)+(\theta \cdot n_0)\left(\widehat{\partial_{n_\xi}v_\xi}-\partial_{n_0}v_0\right)\end{equation} 
Let us decompose 
\begin{align*}\widehat{\partial_{n_\xi}v_\xi}-\partial_{n_0}v_0&=(\widehat{\nabla v_\xi}-\nabla v_0)\cdot n_0+\widehat{\nabla v_\xi}\cdot(\widehat{n_\xi}-n_0)\\&=\left(D\phi_\xi^{-T}\nabla\widehat{v_\xi}-\nabla v_0\right)\cdot n_0+D\phi_\xi^{-T}\nabla\widehat{v_\xi}\cdot(\widehat{n_\xi}-n_0).\end{align*}
 Choosing some $\alpha'\in(\frac{1}{2},\alpha)$ and relying on Lemmas \ref{th:lemma_vt_mod} and \ref{lem:geom} we thus have $\|\widehat{\partial_{n_\xi}v_\xi}-\partial_{n_0}v_0\|_{C^{0,\alpha'}(\partial\Om)}\leq \omega\left(\|\xi\|_{(C^{1,\alpha}(\R^N))^N}\right)$.
Using again the product law $C^{0,\alpha'}(\partial\Om)\cdot H^{1/2}(\partial \Om)\subset H^{1/2}(\partial\Om)$ we obtain 
\begin{align}\nonumber\|\left(\widehat{\partial_{n_\xi}v_\xi}-\partial_{n_0}v_0\right)(\theta \cdot n_0)\|_{H^{1/2}(\partial\Om)}&\leq C\|\widehat{\partial_{n_\xi}v_\xi}-\partial_{n_0}v_0\|_{C^{0,\alpha'}(\partial \Om)}\|\theta\cdot n_0\|_{H^{1/2}(\partial\Om)}\\
&\leq \omega(\|\xi\|_{(C^{1,\alpha}(\R^N))^N})\|\theta\|_{(H^{1/2}(\partial\Om))^N}\label{eq:decomp_partial}\end{align}
On the other hand, we have
\[\widehat{\theta\cdot n_\xi}-\theta\cdot n_0=(\widehat{\theta}-\theta)\cdot \widehat{n_\xi}+\theta\cdot(\widehat{n_\xi}-n_0)\]
With the same tools as before we get $\|\theta\cdot(\widehat{n_\xi}-n_0)\|_{H^{1/2}(\partial \Om)}\leq C\|\theta\|_{(H^{1/2}(\partial\Om))^N}\|\xi\|_{(C^{1,\alpha}(\R^N))^N}$. For the other term we write
\[\widehat{\theta}(x)-\theta(x)=\int_0^1\nabla\theta(x+t\xi(x))\cdot\xi(x)dt\]
so that the product law $C^{0,\alpha}(\partial\Om)\cdot H^{1/2}(\partial \Om)\subset H^{1/2}(\partial\Om)$ again gives
\begin{align*}\|\widehat{\theta}-\theta\|_{H^{1/2}(\partial\Om)}&\leq C\left\|\int_0^1\nabla\theta(x+t\xi(x))\right\|_{(C^{0,\alpha}(\R^N))^N} \|\xi\|_{(H^{1/2}(\partial\Om))^N} \\
&\leq C\|\theta\|_{(C^{1,\alpha}(\R^N))^N}\left(1+\|\xi\|_{(C^{0,\alpha}(\R^N))^N}\right)\|\xi\|_{(H^{1/2}(\partial\Om))^N}\\
&\leq C\|\theta\|_{(C^{1,\alpha}(\R^N))^N}\|\xi\|_{(H^{1/2}(\partial\Om))^N}
\end{align*}
yielding $\|(\widehat{\theta}-\theta)\cdot \widehat{n_\xi}\|_{H^{1/2}(\partial\Om)}\leq C\|\xi\|_{(H^{1/2}(\partial\Om))^N}\|\theta\|_{(C^{1,\alpha}(\R^N))^N}$. Combining the two estimates we thus get 
\[\|\widehat{\theta\cdot n_\xi}-\theta\cdot n_0\|_{H^{1/2}(\partial\Om)}= \omega_{C^{1,\alpha},H^{1/2}}\left(\xi,\theta\right)\]
This estimate together with \eqref{eq:decomp_partial} enable to estimate $\|\widehat{\textbf{H}_{\xi,\theta}}-\textbf{H}_{0,\theta}\|_{H^{1/2}(\partial \Om)}$ thanks to the decomposition \eqref{eq:decomplem}, so that \eqref{eq:estHt_1st} becomes
\begin{equation}\label{eq:Harmonic_final}\|\widehat{\textbf{H}_{\xi,\theta}}-\textbf{H}_{0,\theta}\|_{H^1(\Om)}= \omega_{C^{1,\alpha},H^{1/2}}\left(\xi,\theta\right). \end{equation} This finishes the proof of the estimate of $\textbf{H}_{\xi,\theta}$.

{\bf Estimate of $w_{\xi,\theta}$.} To estimate $\|\widehat{w_{\xi,\theta}}-w_{0,\theta}\|_{H^1(\Om)}$ let us write the equation verified by $\widehat{w_{\xi,\theta}}-w_{0,\theta}$. If one writes $\mathcal{L}_{\xi}:=\text{div}(A_\xi\nabla\cdot)$, then for a function $f_\xi:\Om_\xi\rightarrow\R$ one has $\widehat{\Delta f_\xi}=\mathcal{L}_\xi\widehat{f_\xi}$. Recalling \eqref{eq:pdewt} we therefore have
\[\begin{cases} (-\Delta-\lambda_0)(\widehat{w_{\xi,\theta}}-w_{0,\theta})=\big[(-\Delta-\lambda_0)-(-\mathcal{L}_\xi-\lambda_\xi)\big]\widehat{w_{\xi,\theta}}-\lambda_\xi\widehat{\pi_\xi\textbf{H}_{\xi,\theta}}+\lambda_0\pi_0\textbf{H}_{0,\theta}\text{ in } \Om\\\widehat{w_{\xi,\theta}}-w_{0,\theta}=0\text{ over } \partial \Om\end{cases}\]
Using that $(-\Delta-\lambda_0)^{-1}:H^{-1}(\Om)\cap\{f\in H^{-1}(\Om), \ \langle f,v_0\rangle=0\}\rightarrow \{v_0\}^\perp\cap H^1_0(\Om)$ is an isomorphism (recall that $\lambda_0$ is simple, so that $-\Delta-\lambda_0$ is one-to-one on $\{v_0\}^\perp$ thanks to the Fredholm alternative), there exists $C_N>0$ such that 
\begin{equation}\nonumber\begin{split}\|\widehat{w_{\xi,\theta}}-w_{0,\theta}-\gamma_{\xi,\theta}v_0\|_{H^1(\Om)}\leq C_N\left(\|(A_\xi-\text{Id})\nabla \widehat{w_{\xi,\theta}}\|_{L^2(\Om)}+|\lambda_\xi-\lambda_0|\|\widehat{w_{\xi,\theta}}\|_{L^2(\Om)}\right.
\\ \left.+\|\lambda_\xi\widehat{\pi_\xi\textbf{H}_{\xi,\theta}}-\lambda_0\pi_0\textbf{H}_{0,\theta}\|_{L^2(\Om)}\right)\end{split}\end{equation} where $\gamma_{\xi,\theta}\in\R$ is chosen so that $\widehat{w_{\xi,\theta}}-w_{0,\theta}-\gamma_{\xi,\theta}v_0\in \{v_0\}^\perp$. Now, since $\|A_\xi-\text{Id}\|_{L^\infty(\Om)}\leq C\|\xi\|_{(W^{1,\infty}(\R^N))^N}$ and $|\lambda_\xi-\lambda_0|=\omega(\|\xi\|_{(L^{\infty}(\R^N))^N})$ (thanks to Lemma \ref{th:lemma_vt_mod}) we get 
\begin{equation}\label{eq:estwt_interm1}\|\widehat{w_{\xi,\theta}}-w_{0,\theta}-\gamma_{\xi,\theta}v_0\|_{H^1(\Om)}\leq C\|\xi\|_{(W^{1,\infty}(\R^N))^N}\left(\|\widehat{w_{\xi,\theta}}\|_{H^1(\Om)}+\|\lambda_\xi\widehat{\pi_\xi\textbf{H}_{\xi,\theta}}-\lambda_0\pi_0\textbf{H}_{0,\theta}\|_{L^2(\Om)}\right)\end{equation}
If we denote $\langle\cdot,\cdot\rangle_\xi$ the scalar product in $L^2(\Om_\xi)$, we write
\begin{equation}\label{eq:decompH-H0}\widehat{\pi_\xi\textbf{H}_{\xi,\theta}}-\pi_0\textbf{H}_{0,\theta}=(\widehat{\textbf{H}_{\xi,\theta}}-\textbf{H}_{0,\theta})-(\langle \textbf{H}_{\xi,\theta},v_\xi\rangle_\xi\widehat{v_\xi}-\langle \textbf{H}_{0,\theta},v_0\rangle_0 v_0)\end{equation} Using estimate \eqref{eq:vtmod} from Lemma \ref{th:lemma_vt_mod} and the Harmonic estimate \eqref{eq:Harmonic_final} we have
\begin{equation}\label{eq:eq:scal_pdc_est}|\langle \textbf{H}_{\xi,\theta},v_\xi\rangle_\xi-\langle \textbf{H}_{0,\theta},v_0\rangle_0|=\left|\int_{\Om}\widehat{ \textbf{H}_{\xi,\theta}}\widehat{v_\xi}J_\xi-\int_\Om \textbf{H}_{0,\theta}v_0\right|= \omega_{C^{1,\alpha},H^{1/2}}\left(\xi,\theta\right)\end{equation} ($J_\xi$ was defined in \eqref{eq:AtJt}) so that using again \eqref{eq:vtmod}, \eqref{eq:lambdatmod} and \eqref{eq:Harmonic_final} we deduce 
\begin{equation}\label{eq:estw_t_pit}\|\lambda_\xi\widehat{\pi_\xi\textbf{H}_{\xi,\theta}}-\lambda_0\pi_0\textbf{H}_{0,\theta}\|_{L^2(\Om)}=\omega_{C^{1,\alpha},H^{1/2}}\left(\xi,\theta\right)\end{equation}
Estimate \eqref{eq:estwt_interm1} thus becomes
\begin{equation}\label{eq:estwt_interm2}\|\widehat{w_{\xi,\theta}}-w_{0,\theta}-\gamma_{\xi,\theta}v_0\|_{H^1(\Om)}\leq C\left(\|\xi\|_{(W^{1,\infty}(\R^N))^N}\|\widehat{w_{\xi,\theta}}\|_{H^1(\Om)}+\omega_{C^{1,\alpha},H^{1/2}}\left(\xi,\theta\right)\right)\end{equation}
Now, since $\int_{\Om_\xi}w_{\xi,\theta}v_\xi=\int_\Om w_{0,\theta}v_{0}=0$ we have
\[|\gamma_{\xi,\theta}|=\left|\int_{\Om}(\widehat{w_{\xi,\theta}}-w_{0,\theta})v_0\right|=\left|\int_\Om\widehat{w_{\xi,\theta}}(\widehat{v_\xi}J_\xi-v_0)\right|\leq \omega(\|\xi\|_{(C^{1,\alpha}(\R^N))^N})\|\widehat{w_{\xi,\theta}}\|_{L^2(\Om)}\]
using again Lemma \ref{th:lemma_vt_mod}, which gives 
\begin{align}\nonumber \|\widehat{w_{\xi,\theta}}-w_{0,\theta}\|_{H^1(\Om)}&\leq \omega(\|\xi\|_{(C^{1,\alpha}(\R^N))^N})\|\widehat{w_{\xi,\theta}}\|_{H^1(\Om)}+\omega_{C^{1,\alpha},H^{1/2}}\left(\xi,\theta\right)\\&\leq \omega(\|\xi\|_{(C^{1,\alpha}(\R^N))^N})\left(\|\widehat{w_{\xi,\theta}}-w_{0,\theta}\|_{H^1(\Om)}+\|w_{0,\theta}\|_{H^1(\Om)}\right)+\omega_{C^{1,\alpha},H^{1/2}}\left(\xi,\theta\right)\end{align}
Using $\|w_{0,\theta}\|_{H^1(\Om)}\leq C\|\textbf{H}_{0,\theta}\|_{L^2(\Om)}\leq C\|\theta\|_{(H^{1/2}(\partial\Om))^N}$ based on \eqref{eq:pdewt}, and taking $\omega(\|\xi\|_{(C^{1,\alpha}(\R^N))^N})\leq \frac{1}{2}$ finally yields
\begin{equation}\label{eq:estwt_final} \|\widehat{w_{\xi,\theta}}-w_{0,\theta}\|_{H^1(\Om)}=  \omega_{C^{1,\alpha},H^{1/2}}\left(\xi,\theta\right).\end{equation}

\textbf{Conclusion:} Working from the decomposition \eqref{eq:decompH-H0} and using the harmonic estimate \eqref{eq:Harmonic_final}, estimate \eqref{eq:eq:scal_pdc_est} and also \eqref{eq:vtmod} from Lemma \ref{th:lemma_vt_mod}, we obtain 
\[\|\widehat{\pi_\xi\textbf{H}_{\xi,\theta}}-\pi_0\textbf{H}_{0,\theta}\|_{H^1(\Om)}= \omega_{C^{1,\alpha},H^{1/2}}\left(\xi,\theta\right).\]
This latter bound together with \eqref{eq:estwt_final} provides the desired \eqref{eq:estvt'}. As for \eqref{eq:est_v0theta'}, we have seen that $\|w_{0,\theta}\|_{H^1(\Om)}\leq C\|\theta\|_{(H^{1/2}(\partial\Om))^N}$ based on \eqref{eq:pdewt}. On the other hand, $\|\pi_0\textbf{H}_{0,\theta} \|_{L^2(\Om)}\leq \|\textbf{H}_{0,\theta}\|_{L^2(\Om)}$ by definition and 
\[\|\nabla\pi_0 \textbf{H}_{0,\theta}\|_{L^2(\Om)}^2=\|\nabla \textbf{H}_{0,\theta}\|_{L^2(\Om)}^2-\langle \textbf{H}_{0,\theta},v_0\rangle^2\]
since $\int_\Om\nabla \textbf{H}_{0,\theta}\nabla v_0=0$, thus yielding also 
\[\|\nabla\pi_0 \textbf{H}_{0,\theta}\|_{L^2(\Om)}\leq\|\textbf{H}_{0,\theta}\|_{H^1(\Om)}\leq C\|\textbf{H}_{0,\theta}\|_{H^{1/2}(\partial \Om)}\leq C\|\theta\|_{(H^{1/2}(\partial\Om))^N}\]
Hence $\|\pi_0\textbf{H}_{0,\theta}\|_{H^1(\Om)}\leq C\|\theta\|_{(H^{1/2}(\partial\Om))^N}$, so that we finally have $\|v_{0,\theta}'\|_{H^1(\Om)}\leq \|\pi_{0}\textbf{H}_{0,\theta}\|_{H^1(\Om)}+\|w_{0,\theta}\|_{H^1(\Om)}\leq C\|\theta\|_{(H^{1/2}(\partial\Om))^N}$, thus giving \eqref{eq:est_v0theta'}.
\end{proof}

\subsection{Second derivative of $\lambda_1$.}
A final preparatory step to estimate $\lambda_1''(t)-\lambda_1''(0)$ is to justify that the expression \eqref{eq:2nd_deriv_lambda} below of the second derivative $(\lambda_1)_\Om''(0)$ is valid when $\Om=(\text{Id}+\xi)(B)$ for some vector field $\xi$ which is only $C^{1,1}$. Formula \eqref{eq:2nd_deriv_lambda} is indeed well-known for $C^3$ domains (see for instance \cite[Theorem 5.9.2 and Section 5.9.6]{HP18}), but for $C^{1,1}$ domains it does not seem to have been justified in the literature. As a matter of fact, the expression \eqref{eq:2nd_deriv_lambda} has been implicitly used in \cite{DL19} without further justification for domains $(\text{Id}+\xi)(B)$ with $\xi\in W^{2,p}(\R^N,\R^N)$. From \eqref{eq:2nd_deriv_lambda} we will immediately deduce a corresponding expression for $\lambda_1''(t)$, see Lemma \ref{lem:lambda''t}.

In this paragraph, if $\xi$ is a Lipschitz vector field and $f_\xi$ is defined on $\Om_\xi$ or $\partial\Om_\xi$ we still write $\widehat{f_\xi}$ the function $f_\xi\circ\phi_\xi$ defined on $\Om$ or $\partial\Om$.
\begin{lemma}\label{lem:lambda''Om}[Second derivative of $\lambda_1$] Let $\Om\subset\R^N$ be a bounded open set given by $\Om=(\text{Id}+\zeta)(B)$ for some $\zeta\in C^{1,1}(\R^N,\R^N)$. Let $n$, $H$ and $b$ denote respectively its outer unit normal, curvature and second fundamental form. Let $\alpha\in(\frac{1}{2},1)$. Denote by $\lambda_\Om:=\lambda_1(\Om)$. Then for any $\theta\in C^{1,\alpha}(\R^N,\R^N)$ it holds
\begin{equation}\label{eq:2nd_deriv_lambda}\lambda_\Om''(0)\cdot(\theta,\theta)= 2\left(\int_{\Om}|\nabla v'|^2-\lambda_\Om\int_\Om|v'|^2\right)+\int_{\partial \Om}(\partial_{n}v)^2\left[H(\theta\cdot n)^2-b(\theta_\tau,\theta_\tau)+2\nabla_\tau(\theta\cdot n)\cdot\theta_\tau\right]\end{equation}
where $\theta_\tau$ denotes the tangential component of $\theta$, $v$ is the first $L^2$ normalized and nonnegative eigenfunction of $\Om$ and $v'$ is uniquely determined by the equations 
\[\begin{cases}-\Delta v'=\lambda_\Om v'+(\lambda_\Om'(0)\cdot \theta) v,  &\text{in }\Om\\
v'=-\partial_n v(\theta\cdot n), &\text{over }\partial\Om\\
\int_\Om v'v=0.
\end{cases}\]
\end{lemma}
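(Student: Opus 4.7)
The plan is to obtain \eqref{eq:2nd_deriv_lambda} by smooth approximation, the formula being classical when $\zeta \in C^3$ (see \cite[Section 5.9.6]{HP18}). I would take smooth vector fields $\zeta_j \in C^\infty(\mathbb{R}^N, \mathbb{R}^N)$ with $\zeta_j \to \zeta$ in $C^{1,\alpha}(\mathbb{R}^N, \mathbb{R}^N)$, obtained by standard mollification of $\zeta \in C^{1,1}$ combined with interpolation between the uniform $C^{1,1}$ bound and the uniform convergence in $C^1$. Setting $\Omega_j := (\mathrm{Id}+\zeta_j)(B)$, the formula \eqref{eq:2nd_deriv_lambda} holds on each $\Omega_j$ for the associated eigenpair $(v_j, v_j')$, and I would then pass to the limit $j \to \infty$ on both sides.

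Convergence of the right-hand side is the routine part. Writing $\xi_j := (\mathrm{Id}+\zeta_j) \circ (\mathrm{Id}+\zeta)^{-1} - \mathrm{Id}$, so that $\Omega_j = (\mathrm{Id}+\xi_j)(\Omega)$ with $\|\xi_j\|_{(C^{1,\alpha}(\R^N))^N} \to 0$, Lemma \ref{th:lemma_vt_mod} (with $\Omega$ as reference) yields $\widehat{v_j} \to v$ in $C^{1,\alpha'}(\overline{\Omega})$; Lemma \ref{lem:est_v'} gives $\widehat{v_j'} \to v'$ in $H^1(\Omega)$; and Lemma \ref{lem:geom} delivers the convergence in appropriate norms of the geometric quantities $\widetilde{J_{\xi_j}}$, $\widehat{n_{\xi_j}}$, $\widehat{H_{\xi_j}}$, $\widehat{b_{\xi_j}}$ and $\widehat{\nabla_{\tau_{\xi_j}} \alpha_{\xi_j}}$. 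Since $\theta \in C^{1,\alpha}$ with $\alpha > 1/2$, these convergences combine through the Hölder--Sobolev product laws already exploited in Section \ref{sect:contOm} and $L^2$-duality to pass to the limit in each term of the right-hand side.

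The main obstacle is to justify \emph{simultaneously} the existence of $(\lambda_1)_\Omega''(0) \cdot (\theta, \theta)$ in the merely $C^{1,1}$ setting and the convergence $(\lambda_1)_{\Omega_j}''(0) \cdot (\theta, \theta) \to (\lambda_1)_\Omega''(0) \cdot (\theta, \theta)$. My plan is to pull all perturbed domains $(\mathrm{Id}+t\theta)(\Omega_j)$ back to the fixed ball $B$ through $\Psi_{t, j} := (\mathrm{Id}+t\theta) \circ (\mathrm{Id}+\zeta_j)$, turning $\lambda_1((\mathrm{Id}+t\theta)(\Omega_j))$ into the first Dirichlet eigenvalue on $B$ of a uniformly elliptic operator with symmetric matrix $A_{t, j}$ and weight $J_{t, j}$ defined as in \eqref{eq:AtJt}. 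Since $t \mapsto \Psi_{t, j}$ is affine and $D\Psi_{t, j}$ is invertible with uniformly bounded inverse for $t$ small, the maps $t \mapsto A_{t, j}$ and $t \mapsto J_{t, j}$ are real-analytic from a neighborhood of $t=0$ into $C^{0,\alpha}(\overline{B})$, with bounds uniform in $j$; moreover $(A_{t, j}, J_{t, j})$ converges to the limiting coefficients in $C^{0,\alpha}(\overline{B})$ uniformly in $t$ small. Standard analytic perturbation theory for simple eigenvalues of self-adjoint uniformly elliptic operators (in the spirit of Kato) then ensures that each $g_j(t) := \lambda_1((\mathrm{Id}+t\theta)(\Omega_j))$ is real-analytic in $t$ near $0$, that the same holds for the limit $g(t) := \lambda_1((\mathrm{Id}+t\theta)(\Omega))$, and that $g_j''(0) \to g''(0)$. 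Combining this with the convergence of the right-hand side yields \eqref{eq:2nd_deriv_lambda} in the $C^{1,1}$ setting.
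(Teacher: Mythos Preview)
Your approach is correct and essentially parallels the paper for the right-hand side (Step~2 in the paper's proof): the convergence of each term in \eqref{eq:2nd_deriv_lambda} for $\Omega_j$ to the corresponding term for $\Omega$ is obtained exactly through Lemmas~\ref{th:lemma_vt_mod}, \ref{lem:geom} and~\ref{lem:est_v'}, combined with the $W^{-s,p'}$--$W^{s,p}$ duality and the product laws from Proposition~\ref{prop:factFspq}.

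Where you differ is in the treatment of the left-hand side. The paper does \emph{not} appeal to analytic perturbation theory; instead it invokes the low-regularity second-derivative formula of Bucur--Buttazzo--Bogosel \cite[Theorem~2.1]{BuBog22} (equation \eqref{eq:lambda''low}), which is valid for any Lipschitz domain and $\theta\in W^{1,\infty}$. This formula already certifies that $(\lambda_1)_\Omega''(0)\cdot(\theta,\theta)$ exists, and its ingredients (the material derivative $\dot v_\theta$, the tensor $\mathbf S_\Omega$, etc.) are then shown to be continuous in the domain by a compactness argument in $H^1_0$. On the smooth $\Omega_j$, both \eqref{eq:lambda''low} and \eqref{eq:2nd_deriv_lambda} hold, so one passes to the limit on each side separately. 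Your route---pulling everything back to $B$, noting that the transformed coefficients $(A_{t,j},J_{t,j})$ are rational in $t$ with $C^{0,\alpha}$ coefficients converging in $j$, and invoking Kato-type analyticity of a simple eigenvalue in the operator coefficients---is a legitimate alternative that avoids the dependence on \cite{BuBog22} at the cost of quoting a (standard but not entirely soft) analytic-perturbation result. The paper's approach has the advantage of staying within explicit integral identities and elementary $H^1$ compactness; yours is more operator-theoretic and arguably more self-contained, but you should be aware that the convergence $g_j''(0)\to g''(0)$ requires not only analyticity of $\Lambda$ but also continuity of its first and second Fr\'echet derivatives at the limiting coefficients, together with convergence of $\partial_t^k(A_{t,j},J_{t,j})|_{t=0}$ for $k=0,1,2$---all true here, but worth making explicit.
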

 
Note that when $\Om$ is $C^3$ the term $\int_{\Om}|\nabla v'|^2-\lambda_\Om\int_\Om|v'|^2$ is more commonly written as the boundary term $\int_{\partial\Om}v'\partial_nv'$ (which we cannot justify if $\Om$ is merely $C^{1,1}$). 

In our case one does not have enough regularity over $\Om$ to perform the classical integration by parts leading to expression \eqref{eq:2nd_deriv_lambda}. As a consequence, in order to prove \eqref{eq:2nd_deriv_lambda} in the $C^{1,1}$ case we rely on  \eqref{eq:2nd_deriv_lambda} in the smooth case combined with a low-regularity formula of $(\lambda_1)_\Om''(0)\cdot(\theta,\theta)$ which holds true for bounded Lipschitz open sets, proven in \cite[Theorem 2.1]{BuBog22} (see also \cite{Lau20} for an expression for the Dirichlet energy in the same spirit).
\begin{proof}[Proof of Lemma \ref{lem:lambda''Om}]
One can apply the second derivative formula from \cite[Theorem 2.1]{BuBog22} which holds true for any Lipschitz domain and $\theta\in W^{1,\infty}(\R^N,\R^N)$:
\begin{align}\nonumber\lambda_\Om''(0)\cdot(\theta,\theta)=\int_\Om\left(-2|\nabla \dot{v}_\theta|^2+2\lambda_\Om|\dot{v}_\theta|^2+2(\textbf{S}_\Om:D\theta)\text{div}(\theta)+(\lambda_\Om|v|^2-|\nabla v|^2)(\text{div}(\theta)^2+D\theta^T:D\theta)\right)
\\+\int_\Om\left(2(2D\theta^2+D\theta D\theta^T)|\nabla v|^2 -2\lambda_\Om'(0)\cdot(\theta)\text{div}(\theta)|v|^2\right)
\label{eq:lambda''low}\end{align}
where $:$ is the matrix dot product, $\textbf{S}_\Om:=(|\nabla v|^2-\lambda_\Om|v|^2)\text{Id}-2\nabla v\otimes\nabla v$, and $\dot{v}_\theta$ denotes the material derivative of $v$ in the direction $\theta$, meaning the derivative at $0$ in the direction $\theta$ of the function $\theta\in W^{1,\infty}\mapsto v_\theta\circ(\text{Id}+\theta)\in H^1_0(\Om)$ where $v_\theta$ is the first $L^2$ normalized eigenfunction of $\Om_\theta$. Note that $\dot{v}_\theta$ verifies the elliptic equation
\[\begin{cases}-\Delta \dot{v}_\theta-\lambda_\Om \dot{v}_\theta=\lambda_\Om v\text{div}(\theta)+\left(\lambda_\Om'(0)\cdot(\theta)\right)v+\text{div}(A_\theta'\nabla v), \text{ in } \Om\\
\dot{v}_\theta=0 \text{ over }\partial\Om\\
\int_\Om\dot{v}_\theta v=-\frac{1}{2}\int_\Om|v|^2\text{div}(\theta)\end{cases}\]
with $A_\theta':=\text{div}(\theta)\text{Id}-D\theta-D\theta^T$. 

Recall that $\Om=B_\zeta=(\text{Id}+\zeta)(B)$. Since $\zeta\in C^{1,1}(\R^N,\R^N)$, there exists $\zeta_j\in C^\infty(\R^N,\R^N)$ converging locally to $\zeta$ in $C^{1,\beta}$ for each $\beta\in(0,1)$. Letting $\Om_j:=B_{\zeta_j}=(\text{Id}+\zeta_j)(B)$, we have $\Om_j=(\text{Id}+\zeta_j)\circ(\text{Id}+\zeta)^{-1}(B_\zeta)$ so that $\Om_j=(\text{Id}+\xi_j)(\Om)$ with $\xi_j:=(\text{Id}+\zeta_j)\circ(\text{Id}+\zeta)^{-1}-\text{Id}$ converging locally to $0$ in $C^{1,\beta}$ for each $\beta\in(0,1)$. As $\Om_j$ is smooth, $\lambda_{\Om_j}''(0)\cdot(\theta,\theta)$ both equals \eqref{eq:2nd_deriv_lambda} (see \cite[Theorem 5.9.2 and Section 5.9.6]{HP18}) and \eqref{eq:lambda''low} and we will pass to the limit in both expressions.

Through the proof we give an additionnal index $j$ to the notations linked to $\Om_j$: $v_j$ is the first eigenfunction of $\Om_j$,  $\lambda'_j:=\lambda_{\Om_j}'(0)\cdot(\theta)$, $\dot{v}_j$ is the material derivative of $v_j$ in direction $\theta$ and $v_j':=v_{\xi_j,\theta}' $(the notation was introduced in Lemma \ref{lem:est_v'}). As for the geometric quantites related to $\Om_j$, $n_j$ denotes the outer unit normal to $\Om_j$, and so on. The proof of the Lemma is divided into two steps.

\textbf{Step 1: continuity of $\lambda_\Om''(0)\cdot(\theta,\theta)$ in $\Om$.} In this step one can assume that $\theta\in W^{1,\infty}(\R^N,\R^N)$. Let us prove that $\lambda_{\Om_j}''(0)\cdot(\theta,\theta)\rightarrow\lambda_{\Om}''(0)\cdot(\theta,\theta)$.

Since $\Om_j\to\Om$ in $C^{1,\beta}$ (in the sense introduced above), one can find $D\subset\R^N$ open bounded such that $\Om_j,\Om\subset D$ for every $j$. Setting $\gamma_j:=\int_D\dot{v}_jv_j$ we have
\[\|\dot{v}_j-\gamma_jv_j\|_{H^1_0(D)}\leq \|\lambda_jv_j\text{div}(\theta)\|_{H^{-1}(\Om_j)}+\|\lambda'_jv_j\|_{H^{-1}(\Om_j)}+\|\text{div}(A_\theta'\nabla v_j)\|_{H^{-1}(\Om_j)}\]
Now, since $\lambda_j'=\int_{\Om_j}\textbf{S}_j:D\theta$ thanks to \cite[Theorem 2.1]{BuBog22}, and using that $\lambda_j$ is bounded and $v_j$ is bounded in $H^1$ (thanks to Lemma \ref{th:lemma_vt_mod}) we deduce that $\dot{v}_j-\gamma_jv_j$ is bounded in $H^1_0(D)$, yielding in turn that $\dot{v}_j$ is bounded in $H^1_0(D)$ as $\gamma_j=-\frac{1}{2}\int_{\Om_j}|v_j|^2\text{div}(\theta)$. As a consequence, $\dot{v}_j$ converges (up to subsequence) towards some $\tilde{v}\in H^1_0(D)$ weakly in $H^1$, strongly in $L^2$ and almost everywhere. Using again Lemma \ref{th:lemma_vt_mod} we have that $\lambda_j\rightarrow \lambda_\Om$, $\lambda_j'\rightarrow\lambda_\Om'(0)\cdot(\theta)$ and $v_j\rightarrow v$ in $H^1$, so that relying also on the Hausdorff convergence $\Om_j\to\Om$ one can pass to the limit in the sense of distributions in 
\[\begin{cases}-\Delta \dot{v}_j-\lambda_j \dot{v}_j=\lambda_j v_j\text{div}(\theta)+\lambda_j'v_j+\text{div}(A_\theta'\nabla v_j), \text{ in } \Om_j\\
\dot{v}_j=0 \text{ over }\partial\Om_j\\
\int_{\Om_j}\dot{v}_jv_j=-\frac{1}{2}\int_{\Om_j}|v_j|^2\text{div}(\theta)\end{cases}\]
to deduce that $\tilde{v}$ verifies 
\[\begin{cases}-\Delta \tilde{v}-\lambda_\Om \tilde{v}=\lambda_\Om v\text{div}(\xi)+\left(\lambda_\Om'(0)\cdot(\xi)\right)v+\text{div}(A_\theta'\nabla v), \text{ in } \Om\\
\int_\Om\tilde{v}v=-\frac{1}{2}\int_\Om|v|^2\text{div}(\theta)\end{cases}\]
Since $\Om$ is Lipschitz it suffices to prove that $\tilde{v}=0$ a.e. outside $\Om$ to deduce that $\tilde{v}\in H^1_0(\Om)$ (see for instance \cite[Proposition 3.2.16]{HP18}) and therefore that $\tilde{v}=\dot{v}$. But this is seen directly by passing to the limit a.e. in the identity $\textbf{1}_{D\setminus\Om_j}\dot{v}_j=0$, since $\dot{v}_j\to\tilde{v}$ a.e. and $|\Om_j\Delta\Om|\to0$ (as $\Om_j\to\Om$ in $C^{1,\beta}$). Now, the convergence of $\dot{v}_j$ towards $\dot{v}$ is strong in $H^1$, since by multiplying the equation by $\dot{v}_j$ and integrating by parts we get $\|\nabla \dot{v}_j\|_{L^2(D)}\rightarrow\|\nabla \dot{v}\|_{L^2(D)}$. We can therefore pass to the limit as $j\rightarrow+\infty$ in \eqref{eq:lambda''low} to deduce that
we have in fact $\lambda_{\Om_j}''(0)\cdot(\theta,\theta)\rightarrow\lambda_{\Om}''(0)\cdot(\theta,\theta)$.

\textbf{Step 2: continuity of \eqref{eq:2nd_deriv_lambda} in  $\Om$.} In this step we rather assume $\theta\in C^{1,\alpha}(\R^N,\R^N)$ for some $\alpha\in(\frac{1}{2},1)$. We want to pass to the limit in the expression
\begin{equation}\label{eq:dscnde_omj}2\left(\int_{\Om_j}|\nabla v_j'|^2-\lambda_j\int_{\Om_j}|v_j'|^2\right)+\int_{\partial \Om_j}(\partial_{n_j}v_j)^2\left[H_j(\theta\cdot n_j)^2-b_j(\theta_{\tau_j},\theta_{\tau_j})+2\nabla_{\tau_j}(\theta\cdot n_j)\cdot\theta_{\tau_j}\right]\end{equation}
Thanks to a change of variable, the integral on $\Om_j$ can be written
\begin{align*}2\int_{\Om}\left(|\widehat{\nabla v_j'}|^2-\lambda_j|\widehat{v_j'}|^2\right)J_j&=2\int_{\Om}\left(\langle A_j\nabla\widehat{v_j'},\nabla\widehat{v_j'}\rangle-\lambda_jJ_j|\widehat{v_j'}|^2\right)\end{align*}
where $J_j:=J_{\xi_j}$ and $A_j:=A_{\xi_j}$ are defined in \eqref{eq:AtJt}. We have that $A_j$ and $J_j$ go respectively to $\text{Id}$ and $1$ in $L^\infty(\Om)$ as $j\rightarrow+\infty$, and also $\lambda_j\rightarrow \lambda_\Om$, while on the other hand $\widehat{v_j'}$ converges to $v_{0,\theta}'$ in $H^1(\Om)$ thanks to Lemma \ref{lem:est_v'}. As a consequence the integral on $\Om_j$ converges to $2\left(\int_{\Om}|\nabla v'|^2-\lambda_\Om\int_\Om|v'|^2\right)$.

For the convergence of the integral on $\partial\Om_j$, we show that $\int_{\partial \Om_j}(\partial_{n_j}v_j)^2H_j(\theta\cdot n_j)^2\rightarrow \int_{\partial \Om}(\partial_{n}v)^2H(\theta\cdot n)^2$ and the other terms can be treated similarly. We can write
\begin{align*}\int_{\partial \Om_j}(\partial_{n_j}v_j)^2H_j(\theta\cdot n_j)^2&=\int_{\partial\Om}\tilde{J}_j\widehat{(\partial_{n_j}v_j)^2}\widehat{H_j}\widehat{(\theta\cdot n_j)^2}
\\ &= \int_{\partial\Om}\widehat{H_j}\widehat{g_j}\end{align*}
for $\widehat{g_j}:=\tilde{J}_j\widehat{(\partial_{n_j}v_j)^2}\widehat{(\theta\cdot n_j)^2}$, where $\tilde{J}_j:=\tilde{J}_{\xi_j}$ is the surface jacobian defined in \eqref{eq:change_surface}. We claim that $\widehat{g_j}\rightarrow(\partial_{n}v)^2(\theta\cdot n)^2=:g$ in $C^{0,\alpha}$. This comes from Lemmas \ref{th:lemma_vt_mod} and \ref{lem:geom}, and moreover one estimates $\left(\widehat{(\theta\cdot n_j)}-\theta\cdot n\right)$ as follows
\begin{align*}\|\widehat{(\theta\cdot n_j)}-\theta\cdot n\|_{C^{0,\alpha}(\partial\Om)}&\leq \|\theta\cdot(\widehat{n_j}-n)\|_{C^{0,\alpha}(\partial\Om)}+\|(\widehat{\theta}-\theta)\widehat{n_j}\|_{C^{0,\alpha}(\partial\Om)}\\
&\leq \|\theta\|_{(C^{0,\alpha}(\partial\Om))^N}\|\xi_j\|_{(C^{1,\alpha}(\partial\Om))^N}+C\|\theta\|_{(C^{1,\alpha}(\partial\Om))^N}\|\xi_j\|_{(C^{0,\alpha}(\partial\Om))^N}\end{align*}
where the estimate of $\widehat{\theta}-\theta$ comes from writing $\widehat{\theta}-\theta=\int_0^1\nabla\theta((1-t)\cdot+t\phi_{\xi_j})\cdot\xi_jdt$.  
On the other hand, picking $s\in(0,1)$ such that $1-\alpha<s<\alpha$ and any $p\in (1,\infty)$ it holds $\widehat{H_j}-H=\omega_{s,p,\alpha}(\xi_j)$ thanks to Lemma \ref{lem:geom}, so that we also have $\widehat{H_j}\widehat{g_j}-Hg=\omega_{s,p,\alpha}(\xi_j)$ thanks to the convergence of the $\widehat{g_j}$. We may therefore use a $W^{-s,p'}\cdot W^{s,p}$ duality estimate and the embedding $C^{0,\alpha}(\partial \Om)\subset  W^{s,p}(\partial \Om)$ (see \ref{prop:Sobinj} from Proposition \ref{prop:factFspq}) to deduce that 
\[\int_{\partial\Om}\widehat{H_j}\widehat{g_j}\rightarrow\int_{\partial\Om}Hg.\]
This proves the convergence of \eqref{eq:dscnde_omj} towards the corresponding expression for $\Om$, and thus concludes Step 2.

\textbf{Conclusion.} Since each $\Om_j$ is smooth we have that $\lambda_{\Om_j}''(0)\cdot(\theta,\theta)$ both equals \eqref{eq:dscnde_omj} and \eqref{eq:lambda''low} for each $j$. The two previous steps ensure that we can pass to the limit on both sides to deduce that \eqref{eq:2nd_deriv_lambda} holds for $\Om$.
\end{proof}

\subsection{Estimate of $\lambda_1''(t)$ and proof of the main results.}
Relying on the expression of the second derivative given by Lemma \ref{lem:lambda''Om} we can now tackle estimating the variation of the second derivative of $\lambda_1(t)=\lambda_1\left((\text{Id}+t\xi_h)(B)\right)$. We first obtain an expression for $\lambda_1''(t)$, which we state in the next Lemma. 

Let us first recall and set some notations for the remainder of this section.
We use the notations from Theorems \ref{cor:min_smooth}, \ref{th:ITJc} and \ref{th:IClambda}: any $h:\partial B\rightarrow\R$ is extended to the whole of $\R^N$ into some $h$ locally constant in normal directions around $\partial B$, and we then set $\xi_h(x):=h(x)x$ so that $\|\xi_h\|_{(C^{1,\alpha}(\R^N))^N}\leq C\|h\|_{C^{1,\alpha}(\partial B)}$ for any $\alpha\in(0,1]$ and $h\in C^{1,\alpha}(\partial B)$. We let $B_h:=(\text{Id}+\xi_h)(B)$ and $B_t:=B_{th}$ for $t\in[0,1]$. The notations $n_t$, $H_t$ and $b_t$ refer respectively to the outer unit normal, curvature and second fundamental form of $B_t$.

\begin{lemma}\label{lem:lambda''t}Let $h\in C^{1,1}(\partial B)$. For all $t\in[0,1]$ it holds
\begin{equation}\label{eq:lambda''}\lambda_1''(t)=2\left(\int_{B_t}|\nabla v_t'|^2-\lambda_1(t)\int_{B_t}|v_t'|^2\right)+\int_{\partial B_t}(\partial_{n_t}v_t)^2\left[H_t(\xi_h\cdot n_t)^2-b_t((\xi_h)_{\tau_t},(\xi_h)_{\tau_t})+2\nabla_{\tau_t}(\xi_h\cdot n_t)\cdot(\xi_h)_{\tau_t}\right],\end{equation} where $(\xi_h)_{\tau_t}$ is the tangential (over $\partial B_t$) component of $\nabla \xi_h$, $v_t$ is the first $L^2$ normalized eigenfunction of $B_t$ and $v_t'$ is determined by the equations 
\[\begin{cases}-\Delta v_t'=\lambda_1(t)v_t'+\lambda_1'(t)v_t,  &\text{in }B_t\\
v_t'=-\partial_{n_t} v_t(\xi_h\cdot n_t), &\text{over }\partial B_t\\
\int_{B_t} v_t'v_t=0.
\end{cases}\]
\end{lemma}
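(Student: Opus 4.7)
The strategy is to compute $\lambda_1''(t)$ as a second shape derivative taken at $B_t$ (rather than at $B$) and then invoke Lemma \ref{lem:lambda''Om}. The key observation is the factorization
\[\text{Id} + (t+s)\xi_h = (\text{Id} + s\theta_t)\circ(\text{Id}+t\xi_h),\qquad \theta_t := \xi_h\circ(\text{Id}+t\xi_h)^{-1},\]
from which $B_{t+s} = (\text{Id}+s\theta_t)(B_t)$ for all $s$ in a neighborhood of $0$. Since the curve $s\mapsto s\theta_t$ is \emph{linear} in $s$, the chain rule for Fréchet derivatives gives directly
\[\lambda_1''(t) = (\lambda_1)_{B_t}''(0)\cdot(\theta_t,\theta_t),\]
with no additional second-order correction coming from differentiating the reparametrization.

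I would then verify the regularity needed to apply Lemma \ref{lem:lambda''Om} at $\Om = B_t$ with direction $\theta_t$. Since $h\in C^{1,1}(\partial B)$, one has $\xi_h\in C^{1,1}(\R^N,\R^N)$; under the standing smallness assumption $\|\xi_h\|_{(W^{1,\infty}(\R^N))^N}<1$ from the beginning of Section \ref{sect:IT}, the map $\text{Id}+t\xi_h$ is a $C^{1,1}$ diffeomorphism of $\R^N$ with $C^{1,1}$ inverse for every $t\in[0,1]$. Consequently $\theta_t\in C^{1,1}(\R^N,\R^N)\subset C^{1,\alpha}(\R^N,\R^N)$ for any $\alpha\in(1/2,1)$, and $B_t=(\text{Id}+t\xi_h)(B)$ is a bounded $C^{1,1}$ open set written as $(\text{Id}+\zeta)(B)$ with $\zeta = t\xi_h\in C^{1,1}$, so the hypotheses of Lemma \ref{lem:lambda''Om} are fulfilled.

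Applying that lemma to $(B_t,\theta_t)$ produces exactly the formula \eqref{eq:lambda''} once one identifies $\theta_t$ with $\xi_h$ along $\partial B_t$ in the natural Lagrangian sense: at $y=(\text{Id}+t\xi_h)(x)\in\partial B_t$ one has $\theta_t(y)=\xi_h(x)$, so the normal velocity, tangential component, and surface gradient of $\theta_t$ along $\partial B_t$ appearing in \eqref{eq:2nd_deriv_lambda} are, respectively, the quantities $\xi_h\cdot n_t$, $(\xi_h)_{\tau_t}$ and $\nabla_{\tau_t}(\xi_h\cdot n_t)$ of the statement once $\xi_h$ is transported to $\partial B_t$ via $\text{Id}+t\xi_h$; the boundary datum $v_t'=-\partial_{n_t}v_t(\xi_h\cdot n_t)$ matches accordingly, and the PDE satisfied by $v_t'$ is inherited from the one satisfied by $v'$ in Lemma \ref{lem:lambda''Om}. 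The only real work is this bookkeeping and the regularity check for $\theta_t$; I do not expect any serious obstacle, as both steps are routine once the composition identity above is in hand.
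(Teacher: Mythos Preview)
Your strategy coincides with the paper's: show that $\lambda_1''(t)$ equals the second shape derivative of $\lambda_1$ at $B_t$ in the relevant direction and then read off the formula from Lemma~\ref{lem:lambda''Om}. The paper is terser---it asserts $B_{s+t}=(\text{Id}+s\xi_h)(B_t)$ and applies Lemma~\ref{lem:lambda''Om} with direction $\xi_h$ directly---while you reparametrize more carefully via $\theta_t=\xi_h\circ(\text{Id}+t\xi_h)^{-1}$.

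One bookkeeping point worth making explicit: on $\partial B_t$ the fields $\theta_t$ and $\xi_h$ (the latter evaluated as a function on $\R^N$) are \emph{not} pointwise equal---at $y=(1+th(x))x$ one has $\theta_t(y)=h(x)x$ while $\xi_h(y)=h(x)(1+th(x))x$, so they differ by the scalar factor $1+th$. Hence the formula Lemma~\ref{lem:lambda''Om} hands you has $\theta_t$ in it, and it matches \eqref{eq:lambda''} only under the Lagrangian reading of ``$\xi_h$'' you invoke in your last paragraph (i.e.\ $\xi_h$ pulled back from $\partial B$), not with $\xi_h$ evaluated literally on $\partial B_t$. This is harmless for the downstream \textbf{(IC)} estimate, but if you want the displayed formula exactly as stated you should say which convention you are using.
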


\begin{proof}[Proof of Lemma \ref{lem:lambda''t}]
By Definition \ref{def:diffGeneral} it holds
\[\lambda_1''(t)=\lambda_B''(t\xi_h)\cdot(\xi_h,\xi_h)\]
but since $B_{s+t}=(\text{Id}+(s+t)\xi_h)(B)=(\text{Id}+s\xi_h)(B_t)$ when $\|h\|_{L^\infty(\partial B)}$ is small we immediately get 
\[\lambda_B''(t\xi_h)\cdot(\xi_h,\xi_h)=\lambda_{B_t}''(0)\cdot(\xi_h,\xi_h)\] 
The result then follows from applying Lemma \ref{lem:lambda''Om} with $\Om=B_t$.
\end{proof}

Relying on the expression of $\lambda_1''(t)$ given by this Lemma we are now ready to prove Theorem \ref{th:IClambda}.
\begin{proof}[Proof of Theorem \ref{th:IClambda}]
Let us first suppose that $h\in C^{1,1}(\partial B)$. We can therefore use the expression of $\lambda_1''(t)$ from Lemma \ref{lem:lambda''t} which we rewrite in the following way 
\begin{eqnarray}
\lambda_1''(t)&=&2\left(\int_{B_t}|\nabla v_t'|^2-\lambda_1(t)\int_{B_t}|v_t'|^2\right)+\int_{\partial B_t}(\partial_{n_t}v_t)^2\left[H_t\alpha_t^2-b_t(\beta_t,\beta_t)  -2\nabla_{\tau_t}\alpha_t\cdot\beta_t\right]h^2 \nonumber
\\&& -2\int_{\partial B_t}(\partial_{n_t}v_t)^2\alpha_t(\beta_t\cdot \nabla_{\tau_t}h)h \label{eq:lambda_decompo}
\\ &:=&\mathcal{T}_1(t)+\mathcal{T}_2(t)+\mathcal{T}_3(t)\nonumber
\end{eqnarray} 
where we put $\alpha_t:=n_t\cdot n$ and $\beta_t:=\alpha_tn_t-n$. 
We thus prove for each $1\leq i\leq3$
\begin{equation}\label{eq:Ti_est}\forall t\in[0,1],\ |\mathcal{T}_i(t)-\mathcal{T}_i(0)|\leq \omega(\|h\|_{C^{1,\alpha}(\partial B)})\|h\|^2_{H^{1/2}(\partial B)}.\end{equation}

{\bf Estimate of $\mathcal{T}_1(t)$.} See that $v_t'=v_{t\xi_h,\xi_h}'$ in the notations of Lemma \ref{lem:est_v'}. Writing $\int_{B_t}|\nabla v_t'|^2=\int_{B}\langle A_t\nabla \widehat{v_t'},\nabla \widehat{v_t'}\rangle$ where $A_t:=A_{t\xi_h}$ is defined in \eqref{eq:AtJt}, we get
\begin{align}\nonumber\left|\int_{B_t}|\nabla v_t'|^2-\int_{B}|\nabla v_0'|^2\right|&=\left|\int_B\langle (A_t-\text{Id})\nabla \widehat{v_t'},\nabla \widehat{v_t'}\rangle+(\nabla \widehat{v_t'}-\nabla v_0')\cdot(\nabla \widehat{v_t'}+\nabla v_0')\right|\\&\leq \omega(\|h\|_{C^{1,\alpha}(\partial B)})\|h\|^2_{H^{1/2}(\partial B)}\nonumber\end{align} thanks to Lemma \ref{lem:est_v'}. On the other hand, 

\begin{align}\nonumber\left|\lambda_1(t)\int_{B_t}|v_t'|^2-\lambda_1(0)\int_{B}|v_0'|^2\right|&=\left|(\lambda_1(t)-\lambda_1(0))\int_{B_t}|v_t'|^2+\lambda_1(0)\int_B(J_t-1)|\widehat{v_t'}|^2+(\widehat{v_t'}-v_0)(\widehat{v_t}+v_0)\right|\\&\leq \omega(\|h\|_{C^{1,\alpha}(\partial B)})\|h\|^2_{H^{1/2}(\partial B)}\nonumber\end{align} using Lemmas \ref{lem:est_v'} and \ref{th:lemma_vt_mod}. Putting these two together finally yields
\[\left|\mathcal{T}_1(t)-\mathcal{T}_1(0)\right|\leq  \omega(\|h\|_{C^{1,\alpha}(\partial B)})\|h\|^2_{H^{1/2}(\partial B)}\] thus finishing the proof of the estimate of $\mathcal{T}_1(t)$.

{\bf Estimate of $\mathcal{T}_2(t)$.} Thanks to a surface change of variables we have $\mathcal{T}_2(t)=\int_{\partial B}\widehat{\sigma_t}h$ where 
\[\widehat{\sigma_t}:=(\widehat{\partial_{n_t}v_t})^2\left[\widehat{H_t}\widehat{\alpha_t}^2-\widehat{b_t}(\widehat{\beta_t},\widehat{\beta_t})-2\widehat{\nabla_{\tau_t}\alpha_t}\cdot\widehat{\beta_t}\right]\tilde{J}_t\]
with $\tilde{J}_t$ the surface Jacobian. We have
\begin{equation}\label{eq:T2_duality}\left|\mathcal{T}_2(t)-\mathcal{T}_2(0)\right|=\left|\int_{\partial B}(\widehat{\sigma_t}-\sigma_0)h^2\right|\end{equation}
Notice that $\widehat{\sigma_t}$ is a sum of terms of the form $\widehat{y_t}\times\widehat{ z_t}$ for some $\widehat{y_t}\in\{\widehat{H_t},(\widehat{b_t})_{ij},(\widehat{\nabla_{\tau_t}\alpha_t})_i\}$ with $\widehat{z_t}$ which is a product of terms in $\{\widehat{\alpha_t},\widehat{\partial_{n_t}v_t},\widehat{\beta_t},\tilde{J}_t\}$. Let $p\in(1,2)$ be given by \ref{prop:prodH1/2} from Proposition \ref{prop:factFspq} and choose some $s\in(0,1)$ verifying $1-\alpha<s<\frac{1}{2}<\alpha$, ensuring that the triple $(\alpha,s,p)$ satisfies both the hypotheses of Lemma \ref{lem:geom} and Proposition \ref{prop:factFspq} \ref{prop:prodH1/2}. We thus have that $\widehat{y_t}-y_0=\omega_{s,p,\alpha}(\xi_h)$ (uniformly in $t$), which we denote more simply $\omega_{s,p,\alpha}(h)$. On the other hand, thanks to Lemmas \ref{th:lemma_vt_mod} and \ref{lem:geom}, choosing some $\alpha'\in(\frac{1}{2},\alpha)$ there exists $\omega$ such that $\|\widehat{z_t}-z_0\|_{C^{0,\alpha'}(\partial B)}\leq \omega(\|h\|_{C^{1,\alpha}(\partial B)})$. 
As a consequence we get that $\widehat{\sigma_t}-\sigma_0=\omega_{s,p,\alpha'}(h)$, which we write $\widehat{\sigma_t}-\sigma_0=a_{1,h}b_{1,h}+a_{2,h}b_{2,h}$ in the notations of Lemma \ref{lem:geom}. 

Thanks to \ref{prop:prodH1/2} from Proposition \ref{prop:factFspq} the product law $H^{1/2}(\partial B)\cdot H^{1/2}(\partial B)\subset W^{s,p}(\partial B)$ holds, and we thus have
\[\|b_{i,h}h^2\|_{W^{s,p}(\partial B)}\leq C \|b_{i,h}h\|_{H^{1/2}(\partial B)}\|h\|_{H^{1/2}(\partial B)}.\]
Using the product law $C^{0,\alpha'}(\partial\Om)\cdot H^{1/2}(\partial \Om)\subset H^{1/2}(\partial\Om)$ from \ref{prop:prodCalpha} of Proposition \ref{prop:factFspq} we get
\[\|b_{1,h}h\|_{H^{1/2}(\partial B)}\leq C\|h\|_{H^{1/2}(\partial B)},\]
\[\|b_{2,h}h\|_{H^{1/2}(\partial B)}=\omega\left(\|h\|_{C^{1,\alpha}(\partial B)}\right)\|h\|_{H^{1/2}(\partial B)}.\]
Using a duality estimate $W^{-s,p'}\cdot W^{s,p}$ 
we can therefore estimate \eqref{eq:T2_duality} 
\begin{align*}\left|\mathcal{T}_2(t)-\mathcal{T}_2(0)\right|&\leq \|a_{1,h}\|_{W^{-s,p'}(\partial B)}\|b_{1,h}h^2\|_{W^{s,p}(\partial B)}+\|a_{2,h}\|_{W^{-s,p'}(\partial B)}\|b_{2,h}h^2\|_{W^{s,p}(\partial B)}\\
&=  \omega(\|h\|_{C^{1,\alpha}(\partial B)})\|h\|^2_{H^{1/2}(\partial B)}.\end{align*}

{\bf Estimate of $\mathcal{T}_3(t)$.} Using a surface change of variables we have $\mathcal{T}_3(t)=\int_{\partial B}h(\widehat{\rho_t}\cdot \nabla_{\widehat{\tau_t}}h)$ where 
\[\widehat{\rho_t}:=(\widehat{\partial_{n_t}v_t})^2\widehat{\alpha_t}\widehat{\beta_t}\tilde{J}_t,\  \nabla_{\widehat{\tau_t}}h:=\nabla h -(\nabla h\cdot \widehat{n_t})\widehat{n_t}.\]

We write
\begin{align}\nonumber \left|\mathcal{T}_3(t)-\mathcal{T}_3(0)\right|&=\left|\int_{\partial B}h\widehat{\rho_t}\cdot(\nabla_\tau h-\nabla_{\widehat{\tau_t}}h)+\int_{\partial B}h(\widehat{\rho_t}-\rho_0)\cdot \nabla_\tau h\right|.\end{align}

Now, $\nabla_\tau h-\nabla_{\widehat{\tau_t}}h=(\nabla h\cdot \widehat{n_t})\widehat{n_t}=\nabla_\tau h\cdot (\widehat{n_t}-n)\widehat{n_t}$ since $\nabla h\cdot n=0$. On the other hand $\|\widehat{\rho_t}-\rho_0\|_{C^{0,\alpha'}(\partial B)}\leq\omega(\|h\|_{C^{1,\alpha}(\partial B)})$ choosing some $\frac{1}{2}<\alpha'<\alpha$, thanks to Lemmas \ref{th:lemma_vt_mod} and \ref{lem:geom}. Using a duality estimate $H^{-1/2}\cdot H^{1/2}$, the product law $C^{0,\alpha'}\cdot H^{1/2}\subset H^{1/2}$ and the fact that $\|\nabla_\tau h\|_{H^{-1/2}(\partial B)}\leq C\|h\|_{H^{1/2}(\partial B)}$ (see \ref{prop:prodCalpha} and \ref{prop:difflaw} from Proposition \ref{prop:factFspq}) we thus get
\begin{align*}\left|\mathcal{T}_3(t)-\mathcal{T}_3(0)\right|&\leq \|h\widehat{\rho_t}(\widehat{n_t}-n)\widehat{n_t}\|_{H^{1/2}(\partial B)}\|\nabla_\tau h\|_{H^{-1/2}(\partial B)}+\|h(\widehat{\rho_t}-\rho_0)\|_{H^{1/2}(\partial B)}\|\nabla_\tau h\|_{H^{-1/2}(\partial B)}\\
&\leq\omega(\|h\|_{C^{1,\alpha}(\partial B)})\|h\|_{H^{1/2}(\partial B)}^2 
\end{align*}
using Lemma \ref{lem:geom}.
This finishes the proof of the $\mathcal{T}_3(t)$ estimate.\\

We have thus proved \eqref{eq:Ti_est} and hence Theorem \ref{th:IClambda} in the case where $h\in C^{1,1}(\partial B)$. We then reduce the regularity hypothesis made over $h$ to $h\in C^{1,\alpha}(\partial B)$ with a density argument. Recall that $\lambda_1''(t)=\lambda_{B_{th}}''(0)\cdot(\xi_h,\xi_h)$ since $B_{s+t}=(\text{Id}+(s+t)\xi_h)(B)=(\text{Id}+s\xi_h)(B_t)$ when $\|h\|_{L^\infty(\partial B)}$ is small. Let then $h_j$ be smooth and converging to $h$ in $C^{1,\beta}(\partial B)$ for each $0<\beta<\alpha$ with $\|h_j\|_{C^{1,\alpha}(\partial B)}\leq \|h\|_{C^{1,\alpha}(\partial B)}$.  With an argument similar to \textbf{Step 1} of the proof of Lemma \ref{lem:lambda''Om} we can pass to the limit in the expression $\lambda_{B_{th_j}}''(0)\cdot(\xi_{h_j},\xi_{h_j})$ to get that $\lambda_{B_{th_j}}''(0)\cdot(\xi_{h_j},\xi_{h_j})\rightarrow\lambda_1''(t)$. We can thus let $j\to+\infty$ in
\[\left|\lambda_{B_{th_j}}''(0)\cdot(\xi_{h_j},\xi_{h_j})-\lambda_B''(0)\cdot(\xi_{h_j},\xi_{h_j})\right|\leq \omega(\|h_j\|_{C^{1,\alpha}(\partial B)})\|h_j\|^2_{H^{1/2}(\partial B)}\]
and get the desired estimate.
This finishes the proof in the general case.

\end{proof}

Theorem \ref{th:ITJc} is now a consequence of Theorem \ref{th:IClambda}. The way to pass from an \textbf{(IC)} to an \textbf{(IT)} condition was shown in \cite{DL19} (see \cite[p.3014]{DL19}), but we reproduce the short proof for the convenience of the reader.

\begin{proof}[Proof of Theorem \ref{th:ITJc} ] 
Fix $c>0$ and $\alpha\in(\frac{1}{2},1)$. Thanks to Theorem \ref{th:IClambda} we find $\delta>0$ and a modulus of continuity $\omega_{\lambda_1}$ such that for all $h\in C^{1,\alpha}(\partial B)$ 
with $\|h\|_{C^{1,\alpha}(\partial B)}\leq \delta$ it holds
\begin{equation}\label{eq:lambda''_IC_IT}\forall t\in[0,1],\ |\lambda_1''(t)-\lambda_1''(0)|\leq \omega_{\lambda_1}(\|h\|_{C^{1,\alpha}(\partial B)})\|h\|^2_{H^{1/2}(\partial B)}\end{equation}
We can write
\[\lambda_1(B_h)=\lambda_1(B)+(\lambda_1)_B'(0)\cdot (\xi_h)+\frac{1}{2}(\lambda_1)_B''(0)\cdot (\xi_h,\xi_h)+\int_0^1(\lambda_1''(t)-\lambda_1''(0))(1-t)dt\]
using a second-order Taylor expansion with integral remainder. Using \eqref{eq:lambda''_IC_IT} we thus get 
\[\lambda_1(B_h)=\lambda_1(B)+(\lambda_1)_B'(0)\cdot (\xi_h)+\frac{1}{2}(\lambda_1)_B''(0)\cdot (\xi_h,\xi_h)+\omega_{\lambda_1}(\|h\|_{C^{1,\alpha}(\partial B)})\|h\|^2_{H^{1/2}(\partial B)}\] 
hence that $\lambda_1$ satisfies an \textbf{(IT)}$_{H^{1/2},C^{1,\alpha}}$ condition.
Combining this together with the expansion for $P$ (see \eqref{eq:IT_P}) we get the expansion for $\mathcal{J}_c=P-c\lambda_1$:
\[\mathcal{J}_c(B_h)=\mathcal{J}_c(B)+(\mathcal{J}_c)_B'(0)\cdot (\xi_h)+\frac{1}{2}(\mathcal{J}_c)_B''(0)\cdot (\xi_h,\xi_h)+\omega_c(\|h\|_{C^{1,\alpha}(\partial B)})\|h\|^2_{H^1(\partial B)}\]
with $\omega_c:=\omega_P-c\omega_{\lambda_1}$. This concludes the proof.
\end{proof}

We are now able to prove Theorem \ref{cor:min_smooth}, relying on the stability results proved in \cite{DL19}.
\begin{proof}[Proof of Theorem \ref{cor:min_smooth}]
We proved in Theorem \ref{th:ITJc} that the functional $\mathcal{J}_c$ satisfies an \textbf{(IT)}$_{H^1,C^{1,\alpha}}$ condition in the sense of \cite[Theorem 1.3]{DL19}. It also satisfies a \textbf{(C}$_{H^1}$\textbf{)} hypothesis (see \cite[Lemma 2.8]{DL19}). On the other hand it was proven in \cite[Theorem 1.2]{N14} (see also \cite[Proposition 5.5 (ii)]{DL19}) that $B$ is a critical and strictly stable shape under volume constraint and up to translations for $\mathcal{J}_c$ whenever $c\in (0,c^*)$. For any such $c$ we therefore apply \cite[Theorem 1.3]{DL19} and get that there exists $\delta_c>0$ such that for any $h\in C^{1,\alpha}(\partial B)$ with $\|h\|_{C^{1,\alpha}(\partial B)}\leq\delta_c$ and $|B_h|=|B|$ it holds 
\[\mathcal{J}_c(B_h)\geq \mathcal{J}_c(B)\]
with equality only if $B_h$ is a ball. 
This gives strict minimality (up to translations) of $B$ in a $C^{1,\alpha}$ neighborhood, thus concluding the proof of the Theorem.
\end{proof}

\section{\textit{Selection principle}: minimality of the ball among convex sets. Proof of Theorem \ref{th:mainthm}.} \label{sect:select}
This section is dedicated to the second part of the \textit{selection principle} strategy which we described in the Introduction, namely the regularizing procedure which enables to reduce the proof of the inequality from Theorem \ref{th:mainthm}
\[\forall K\in\K^N \text{ with }|K\Delta B|\leq \delta_c, \ (P-c\lambda_1)(K)\geq (P-c\lambda_1)(B)\]
for general convex perturbations $K$ of $B$ to the same inequality for $C^{1,\alpha}$ perturbations of $B$. As is usual in this procedure (as was originally done by \cite{CL12}, see also among many others \cite{AFM13}, \cite{BdPV15}, \cite{AKN21Lin}) the argument goes by contradiction: we assume that \eqref{eq:goal_ineq} does not hold, meaning that there exists a sequence $(K_j)$ converging to the ball in the $L^1$ sense but for which $\mathcal{J}_c(K_j)<\mathcal{J}_c(B)$. The strategy is then to replace the sequence $K_j$ by a sequence $\widetilde{K_j}$ also converging to $B$, for which \eqref{eq:goal_ineq} is still not verified and in which each $\widetilde{K_j}$ is meant to be much smoother than $K_j$. The convergence of the sequence $\widetilde{K_j}$ and the fact that it still contradicts \eqref{eq:goal_ineq} is somehow built-in the construction of these sets itself, as minimizers of an auxiliary minimization problem involving the $K_j$. The regularity of $\widetilde{K_j}$ then comes from the fact that it is a minimizer of an isoperimetric problem under convexity constraint: it was shown in \cite{LP23} that such minimizers are $C^{1,1}$, and we will provide a uniform version of this result (see Theorem \ref{thm:reg_LP}). This will enable us to apply the result of minimality in a smooth neighborhood proven in Section \ref{sect:IT} (see Theorem \ref{cor:min_smooth}) to finally get a contradiction.

In this section $B\subset \R^N$ still denotes the open unit ball centered at $0$.

\subsection{Regularity theory for the quasi-minimizer of the perimeter under convexity constraint}

This subsection is dedicated to the regularity theorem which is central to the \textit{selection principle} we perform in Section \ref{sect:select}. When working in the framework of quasi-minimizer of the perimeter without convexity constraint, a very useful type of results concerns the strengthening of convergence for a sequence of quasi-minimizers converging to the ball: if a sequence $(E_j)$ of (uniform) quasi-minimizers converges to the ball in a $L^1$ sense, then $E_j$ is $C^{1,1/2}$ for large $j$ and it converges (up to subsequence) to the ball in $C^{1,\alpha}$ for each $\alpha\in(0,1/2)$ (see for instance \cite[Theorem 4.2]{AFM13} for a rigorous statement). This is a compactness-type  result, which is a direct consequence of the $C^{1,1/2}$ regularity of quasi-minimizers and an estimate of their norm. We want here to prove an analogous result in our convexity constrained case. The regularity result we state below (Theorem \ref{thm:reg_LP}) importantly relies on the $C^{1,1}$ regularity results from \cite{LP23} (see \cite[Theorem 2.3]{LP23}). Nevertheless, in comparison with \cite[Theorem 2.3]{LP23} we have to follow the constants in the proof in order to show that a quasi-minimizer is locally parametrized in cartesian graphs by $C^{1,1}$ functions with norm only depending on the relevant constants. We then pass from this quantified local cartesian $C^{1,1}$ regularity to a global spherical estimate. Although this passage often comes as classical in the literature, it does not seem to be so well referenced and we believe that a careful examination of all the arguments can be of use (see also \cite[Appendix B]{Pet22} for similar arguments).

Let us first define the notion of quasi-minimizer of the perimeter under convexity constraint which was introduced in \cite[Definition 2.1]{LP23}. 

\begin{definition}[$(\Lambda,\eps)$-q.m.p.c.c.]\label{def:qmpcc}Let $N\geq2$. Let $\Lambda>0$, $\eps>0$. We say that $K\in \K^N$ is a $(\Lambda,\eps)$-quasi-minimizer of the perimeter under convexity constraint (or $(\Lambda,\eps)$-q.m.p.c.c. for short) if
\begin{equation}\label{eq:qmpc_chap2} \forall \widetilde{K}\in \K^N\textrm{ such that } \widetilde{K}\subset K \text{ and } |K\setminus\widetilde{K}|\leq\eps,\ \ P(K)\leq P(\widetilde{K})+\Lambda|K\setminus\widetilde{K}|.\end{equation}\end{definition}
For $x\in \R^N$, let $\nu_B(x):=x$. Any function $h:\partial B\rightarrow\R$ is extended to $\R^N$ as in Section \ref{sect:IT} (see the beginning of Section \ref{sect:IT}). For $r>0$ and $z\in \R^N$, the notation $B_r(z)$ denotes the ball of radius $r$ centered at $z$. The q.m.p.c.c. regularity result is the following.

\begin{theorem}[Regularity of q.m.p.c.c]\label{thm:reg_LP}Let $N\geq2$, $\Lambda>0$, $\eps>0$ and $0<m<M$. Let $K$ be a $(\Lambda,\eps)$-q.m.p.c.c. verifying $B_m(z)\subset K\subset B_M(z)$ for some $z\in\R^N$. Then there exists $h\in C^{1,1}(\partial B)$ such that (up to translation) $K$ can be written
\[ K=(\text{Id}+h\nu_B)(B)=\{tx(1+h(x)),\ t\in[0,1],\ x\in\partial B\}, \text{ with } \|h\|_{C^{1,1}(\partial B)}\leq  C\]
with $C=C(N,\Lambda,\eps,m,M)>0$ only depending on the indicated parameters.
\end{theorem}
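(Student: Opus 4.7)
The plan has two independent components. First, extract a quantitative local version of the $C^{1,1}$ cartesian regularity from \cite[Theorem 2.3]{LP23}; second, convert the resulting family of local cartesian graphs into a single global radial graph over $\partial B$ with a uniform $C^{1,1}$ bound.

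For the first step, the goal is to prove the following uniform statement: there exist $r_0 = r_0(N,\Lambda,\eps,m,M) > 0$ and $L = L(N,\Lambda,\eps,m,M) > 0$ such that for every $x_0 \in \partial K$, in an orthonormal frame centered at $x_0$ with last coordinate along $-n_K(x_0)$, $\partial K \cap B_{r_0}(x_0)$ is the graph of a convex $C^{1,1}$ function $\phi_{x_0}$ on the $(N-1)$-disc $D_{r_0}$, vanishing at $0$ with $\nabla\phi_{x_0}(0)=0$, and satisfying $\|\phi_{x_0}\|_{C^{1,1}(D_{r_0})} \leq L$. The method is to redo the proof of \cite[Theorem 2.3]{LP23} while tracking each constant. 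Because of the two-sided bound $B_m(z) \subset K \subset B_M(z)$, the diameter, volume, and perimeter of $K$ are a priori controlled in terms of $(m,M,N)$, which pins down the implicit geometric constants. The critical quantitative outputs to extract from the quasi-minimality inequality \eqref{eq:qmpc_chap2} are a uniform interior-ball radius at each boundary point and a uniform $L^\infty$ bound on the second fundamental form, with dependence only on $(N,\Lambda,\eps,m,M)$.

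For the second step, translate so that $z = 0$. Convexity and $B_m(0) \subset K \subset B_M(0)$ ensure that for each $\omega \in \partial B$ there is a unique $\rho(\omega) \in [m,M]$ with $\rho(\omega)\omega \in \partial K$; set $h := \rho - 1$, so that $K = \{tx(1+h(x)),\ t\in[0,1],\ x\in\partial B\}$. The transversality estimate that makes the passage work is the following: at $p := \rho(\omega)\omega$, the supporting hyperplane at $p$ together with $0 \in B_m \subset K$ yields $p\cdot n_K(p)\geq m$, whence
\[\omega\cdot n_K(p) \;=\; \frac{p\cdot n_K(p)}{|p|} \;\geq\; \frac{m}{M}.\]
Cover $\partial B$ by finitely many spherical caps of radius comparable to $r_0/M$ (the cardinality depending only on $N,m,M,r_0$). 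On each cap, associate the local cartesian chart from Step 1 at the corresponding boundary point and view the condition $\rho(\omega)\omega \in \mathrm{graph}(\phi_{x_0})$ as an implicit equation for $\rho$; its derivative in $\rho$ equals $\omega\cdot n_K(p)$ up to a normalization factor, hence is uniformly bounded below by $m/M$. The $C^{1,1}$ implicit function theorem (applied in the Lipschitz-differentiable setting to $\nabla\phi_{x_0}$) then yields a local $C^{1,1}$ representation of $h$ with norm controlled by $L$, $r_0$ and $m/M$. Since $\rho$ is globally defined, the local representations patch into a single $h \in C^{1,1}(\partial B)$ with $\|h\|_{C^{1,1}(\partial B)} \leq C(N,\Lambda,\eps,m,M)$.

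The main obstacle is Step 1: the proof of \cite[Theorem 2.3]{LP23} is not presented in a quantitative form, so its blow-up and comparison arguments must be revisited to verify that each intermediate constant depends only on $(N,\Lambda,\eps,m,M)$. In particular one must check that the interior-ball radius extracted from the q.m.p.c.c.\ inequality does not degenerate under the sandwich assumption, and that the quasi-minimality threshold $\eps$ interacts correctly with the scale $r_0$. Step 2 is geometrically transparent but must be executed carefully in the $C^{1,1}$ regime, where one can no longer invoke classical $C^k$ implicit function statements and has instead to work directly with Lipschitz gradients.
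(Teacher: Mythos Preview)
Your two-step plan---quantitative local cartesian $C^{1,1}$ bounds, then conversion to a global spherical graph---is exactly the paper's structure. The execution differs only in details: for Step~1 the paper does not revisit blow-up arguments or interior-ball radii but instead extracts the single quadratic-growth estimate $M_r(y)\leq Cr^2$ from the last line of the proof of \cite[Theorem 2.3]{LP23} and feeds it into \cite[Lemma 3.2]{DF} to get the $C^{1,1}$ norm; for Step~2 the paper builds an explicit extended chart $\theta'(x,t)=\theta(x)(1+t)$ and applies a quantitative \emph{inverse} function theorem (proved in its appendix), whereas you use the implicit function theorem together with the transversality bound $\omega\cdot n_K\geq m/M$. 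Both routes are valid and yield the same dependence of constants.
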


Let us postpone the proof of this Theorem and show first how we deduce from this result a convergence type result of quasi-minimizers (as in the classical setting).

\begin{corollary}[Convergence of q.m.p.c.c]\label{thm:cor_LP}Let $N\geq2$, $\Lambda>0$, $\eps>0$. If $(K_j)$ is a sequence of $(\Lambda,\eps)$-q.m.p.c.c. such that $|K_j\Delta B|\rightarrow0$, then there exists a sequence $h_j\in C^{1,1}(\partial B)$ such that 
\[\forall j\in\N,\ K_j=(\text{Id}+h_j\nu_B)(B),\]
and for all $\alpha\in(0,1)$ it holds $h_j\rightarrow0$ in $C^{1,\alpha}$.
\end{corollary}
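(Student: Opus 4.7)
The plan is to deduce the result from Theorem \ref{thm:reg_LP} combined with a standard compactness argument that exploits convexity. The first ingredient I would invoke is that, for convex sets whose $L^1$-limit has non-empty interior, $L^1$-convergence and Hausdorff convergence coincide. So $|K_j\Delta B|\to 0$ upgrades to $K_j\to B$ in Hausdorff distance, which in turn yields the uniform inclusions $B_{1/2}(0)\subset K_j\subset B_2(0)$ for all $j$ large enough. From that point on, the $K_j$ are $(\Lambda,\eps)$-q.m.p.c.c.\ satisfying the hypotheses of Theorem \ref{thm:reg_LP} with constants $m=1/2$, $M=2$ independent of $j$.

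Applying Theorem \ref{thm:reg_LP}, I obtain translations $\tau_j\in\R^N$ and functions $\tilde h_j\in C^{1,1}(\partial B)$ such that $K_j-\tau_j=(\text{Id}+\tilde h_j\nu_B)(B)$ and $\|\tilde h_j\|_{C^{1,1}(\partial B)}\leq C=C(N,\Lambda,\eps)$ uniformly in $j$. The sequence $(\tau_j)$ is bounded, since $\tau_j$ is an interior point of $K_j\subset B_2(0)$. By Arzelà–Ascoli, the embedding $C^{1,1}(\partial B)\hookrightarrow C^{1,\alpha}(\partial B)$ is compact for every $\alpha\in(0,1)$, so up to extraction $\tau_j\to\tau_\infty$ and $\tilde h_j\to\tilde h_\infty$ in $C^{1,\alpha}$. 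Passing to the limit in Hausdorff distance gives $K_j\to\tau_\infty+(\text{Id}+\tilde h_\infty\nu_B)(B)$, which the hypothesis forces to equal $B$. By uniqueness of the radial parametrization of a convex body based at any one of its interior points, I conclude $\tau_\infty=0$ and $\tilde h_\infty\equiv 0$; uniqueness of the limit then upgrades the convergence to hold along the whole sequence.

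It remains to re-express $K_j$ as a graph over $\partial B$ based at the origin rather than at $\tau_j$. Since $\tau_j\to 0$, for $j$ large the origin lies in the interior of $K_j$, so $K_j$ is star-shaped with respect to $0$ and admits its own radial function $1+h_j$, giving $K_j=(\text{Id}+h_j\nu_B)(B)$. The relation between $h_j$ and $\tilde h_j$ is encoded in the equation $(1+h_j(x))\,x=\tau_j+(1+\tilde h_j(y))\,y$ for a unique $y\in\partial B$ depending on $x$; since $\tau_j\to 0$ and $\tilde h_j\to 0$ in $C^{1,\alpha}$, an implicit function argument (applied uniformly in $j$ for $j$ large) shows that $h_j\in C^{1,1}$ with uniformly bounded norm and $h_j\to 0$ in $C^{1,\alpha}$ for every $\alpha\in(0,1)$. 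This last reparametrization is the only technically delicate step: one has to verify that changing the base point from $\tau_j$ to $0$ preserves the $C^{1,\alpha}$ convergence, which is where a careful but routine implicit function computation is needed. Everything else is an assembly of Theorem \ref{thm:reg_LP} with the compact embedding $C^{1,1}\hookrightarrow C^{1,\alpha}$ and the identification of any Hausdorff subsequential limit of $K_j$ with $B$.
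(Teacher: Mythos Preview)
Your proposal is correct and follows essentially the same approach as the paper: upgrade $L^1$ to Hausdorff convergence, obtain uniform inner/outer ball inclusions, apply Theorem \ref{thm:reg_LP} for a uniform $C^{1,1}$ bound, then use Arzelà--Ascoli and identify the (unique) limit as $B$. Your handling of the translations $\tau_j$ and the subsequent reparametrization is an unnecessary detour: since you already have $B_{1/2}(0)\subset K_j$, the center $z$ in Theorem \ref{thm:reg_LP} can be taken to be $0$ for all $j$ (this is precisely what the paper does), so the radial parametrization is based at the origin from the start and no implicit-function argument is needed.
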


\begin{remark}\label{rk:optH}
Let us note here that this $C^{1,\alpha}$ convergence for all $\alpha\in (0,1)$ is essentially optimal in the sense that one cannot hope for more than $C^{1,1}$ regularity for a q.m.p.c.c. (see \cite[Proposition 3.18]{LP23} for a counter-example to higher Hölder regularity in two dimensions).
\end{remark}

\begin{proof}[Proof of Corollary \ref{thm:cor_LP}]
Since $|K_j\Delta B|\rightarrow0$, one also has that $K_j\rightarrow B$ in the Hausdorff sense thanks to Proposition \ref{prop:cvgconv_hausd}. As a consequence, there exists $z\in \R^N$ and $0<m<M$ such that
\begin{equation}\nonumber\forall j\in\N, \ B_{m}(z)\subset K_j\subset B_M(z).\end{equation}
The existence of the upper ball follows directly from the definition of the Hausdorff convergence; we refer for instance to \cite[Proposition 2.8, 2.]{LP23} for the existence of a lower ball. Therefore, thanks to Theorem \ref{thm:reg_LP} we deduce that there exists $h_j:\partial B\rightarrow\R$ such that
\[\forall j\in\N, \ K_j=(\text{Id}+h_j\nu_B)(B) \text{ and }\|h_j\|_{C^{1,1}(\partial B)}\leq C\] for some $C>0$ independent of $j$. From this bound on the $C^{1,1}$ norms we deduce for each $\alpha\in(0,1)$ the convergence (up to subsequence) of $h_j$ in $C^{1,\alpha}$ norm to some $h\in C^{1,1}(\partial B)$, using the Arzela-Ascoli Theorem. Since $K_j\rightarrow B$ in the Hausdorff sense we must have $h=0$, which ensures also that the whole sequence $(h_j)$ converges to $0$. This finishes the proof of Corollary \ref{thm:cor_LP}.
\end{proof}

We can now pass to the proof of Theorem \ref{thm:reg_LP}. 

\begin{proof}[Proof of Theorem \ref{thm:reg_LP}]\textbf{Step 1: cartesian estimates of $K$.} Let $\widehat{x_0}\in \partial K$ be fixed.
We claim that there exists 
\begin{itemize}\item a hyperplane $H\subset \R^N$ containing $\widehat{x_0}$ and a unit vector $\xi\in\R^N$ normal to $H$, \\
\item a $(N-1)$ dimensional ball $B_\beta^{N-1}:=B_\beta^{N-1}(\widehat{x_0})$ centered at $\widehat{x_0}$ and of radius $\beta=\beta(m,M)>0$ with $B_\beta^{N-1}\subset H$, \end{itemize} 
such that, denoting by $(x,t)$ a point in $H\times\R$ coordinates (according to the orthonormal frame $H\times\R\xi$)  and defining $u:B_\beta^{N-1}\rightarrow\R$ by the formula $u(x):=\min\{t\in \R, (x,t)\in K\}$ we have
\begin{align}\label{eq:K_Lip1}\{(x,u(x)), \ x\in  B_\beta^{N-1}\}&\subset \partial K \\ K\cap (B_\beta^{N-1}\times\R\xi)&\subset\{(x,t)\in B_\beta^{N-1} \times\R\xi,\ u(x)\leq t\}\label{eq:K_Lip2}\end{align}
and $u\in C^{1,1}\left(\overline{B_\beta^{N-1}}\right)$ with \begin{equation}\label{eq:C11_cart_est}\|u\|_{C^{1,1}(\overline{B_\beta^{N-1}})}\leq C, \text{ where }C=C(N,\Lambda,\eps,m, M)\end{equation} 
In this Step, for any $z\in H$ and $r>0$, we denote by $ B_r^{N-1}(z)\subset H$ the $(N-1)$-dimensional ball of radius $r$ centered at $z$. 

The existence of $H, \xi, \beta', u$ such that 
\[\beta'=\beta'(m,M)\]
\[u\in C^{0,1}(B_{\beta'}^{N-1}(\widehat{x_0})), \text{ with } \|\nabla u\|_{L^{\infty}(B_{\beta'}^{N-1}(\widehat{x_0}))}\leq C(m,M)\]
and such that \eqref{eq:K_Lip1} and \eqref{eq:K_Lip2} are satisfied for $B_{\beta'}^{N-1}(\widehat{x_0})$ comes from the convexity of $K$ (it is proven for instance in \cite[Proposition 4.3]{LP23}). We now prove \eqref{eq:C11_cart_est} for $\beta:=\beta'/2$. Let $y\in B_\beta^{N-1}=B_{\beta}^{N-1}(\widehat{x_0})$, $p\in\partial u(y)$ and set for each $r\in(0,\beta)$ 
\[M_r(y):=\sup_{B_r^{N-1}(y)}(u-\left(u(y)+\langle p,\cdot-y\rangle\right)\]
Using quasi-minimality of $K$ and the estimates of $\beta'$ and $\|\nabla u\|_{L^{\infty}(B_{\beta'}^{N-1}(\widehat{x_0}))}$ above, we can use \cite[Theorem 2.3]{LP23} (see last equation of the proof of Theorem 2.3) to deduce that there exists \begin{align*}C&=C\left(N,\Lambda, \eps,m,M\right)\\ r_0&=r_0\left(\eps,m,M\right)\end{align*}
such that 
\[\forall y\in B_{\beta}^{N-1},\ \forall r\in(0,r_0),\ M_r(y)\leq Cr^2\]
We now apply Lemma 3.2 in \cite{DF} which ensures that $u\in C^{1,1}\left(\overline{B_{\beta}^{N-1}}\right)$. More precisely, it is proven in \cite[Lemma 3.2]{DF} that there exists $\rho_0>0$ and $\eta>0$ only depending on $r_0$ and the Lipschitz character of $B_\beta^{N-1}$ (hence only on $\beta$) such that
\[\forall x\in B_{\beta}^{N-1},\ \forall y\in B_{\beta}^{N-1}\cap B_{\rho_0}^{N-1}(x), |\nabla u(x)-\nabla u(y)|\leq C'|x-y|\]
where $C'=6\eta^{-1}$. As we also have 
\begin{equation}\begin{split}\forall x\in B_{\beta}^{N-1},\ \forall y\in B_{\beta}^{N-1} \text{ with } |y-x|\geq \rho_0,\\ |\nabla u(x)-\nabla u(y)|\leq 2\|\nabla u\|_{L^\infty( B_{\beta}^{N-1})}\rho_0^{-1}|x-y|\end{split}\nonumber\end{equation}
 then gathering the two we get that 
\[\forall x,y\in B_{\beta}^{N-1}, |\nabla u(x)-\nabla u(y)|\leq \widetilde{C}|x-y|\]
by setting $\widetilde{C}:=\max\left\{C', 2\|\nabla u\|_{L^\infty(B_{\beta}^{N-1})}\rho_0^{-1}\right\}$. This together with the bound on $\|\nabla u\|_{L^\infty(B_{\beta'}^{N-1}(\widehat{x_0}))}$ above, and \[\|u\|_{L^\infty(B_\beta^{N-1})}\leq \text{diam}(K)\] yield the desired estimate on $\|u\|_{C^{1,1}(\overline{B_\beta^{N-1}})}$.

\textbf{Step 2: local spherical estimates of $\partial K$.} This step and the next one are similar to \cite[Appendix B]{Pet22}. Fix $\widehat{x_0}\in\partial K$. We apply Step 1 at $\widehat{x_0}$, and up to translating and rotating we assume without loss of generality that $z=0$ (so that $B_m(0)\subset K\subset B_M(0)$) and $\xi=e_N$ is the $N^{\text{th}}$ canonical direction. In this step we consider $z$ as the origin, so that the coordinates $(x,t)\in H\times\R$ will now take this into account. As a consequence, $\widehat{x_0}$ is now written $\ \widehat{x_0}=(0,t_0)$ for some $t_0<0$, and if $\Om:=B_\beta^{N-1}$ denotes the $(N-1)$-dimensional ball found in Step 1, we have
\[\forall x\in\Om,\ \widehat{u}(x):=(x,u(x)+t_0)\in \partial K\]
Since $H$ is orthogonal to $\xi=e_N$ and contains $\widehat{x_0}$ we have 
\[H=\widehat{x_0}+\{x_N=0\}\]
 We write more simply $B_m:=B_m(0)$. Let $\theta$ be the map
\begin{align}\nonumber \theta:\Om&\rightarrow \partial B_m\\
x&\mapsto m\frac{\widehat{u}(x)}{\left|\widehat{u}(x)\right|}\nonumber\end{align}
which associates to $x\in \Om$ the spherical coordinates corresponding to $\widehat{u}(x)$. Let now $\rho_{K}:\partial B_m\rightarrow (0,\infty)$ be the distance function of the convex set $K$, meaning that for any $\phi\in \partial B_m$, $\rho_{K}(\phi)$ is the unique $t>0$ such that $t\phi\in \partial K$. Then for each $x\in \Om$, it holds $\theta(x)\rho_{K}(\theta(x))=\widehat{u}(x)$, so that 
\begin{equation}\rho_{K}(\theta(x))=\frac{|\widehat{u}(x)|}{m}\label{eq:rho_circ_theta}\end{equation}
As a consequence, $\rho_{K}\circ \theta\in C^{1,1}(\Om)$ with norm only depending on the $C^{1,1}$ norm of $u$. The rest of Step 2 consists in showing that $\rho_{K}$ itself is $C^{1,1}$ in a neighborhood of $\theta(x_0)$ and to estimate its norm, by using a suitable version of the inverse function Theorem. 

We let $\theta'$ be the map
\begin{align}\nonumber \theta':\Om\times \R&\rightarrow \R^N\\
(x,t)&\mapsto \theta(x)(1+t)\nonumber\end{align}
Note that $\theta'\in C^{1,1}(\Om\times\R)$ and $\theta'_{|\Om\times\{0\}}=\theta$. Then it holds

\begin{equation}\label{eq:Dtheta'}D\theta'(\widehat{x_0})=m
\begin{pNiceArray}{cw{c}{1cm}c|c}
\Block{3-3}{|t_0|^{-1}I_{N-1}} & & & 0 \\
& & & \Vdots \\
& & & 0 \\
\hline
0 & \Cdots& 0 & 1
\end{pNiceArray}\end{equation}

Using a quantitative version of the inverse function Theorem (see Theorem \ref{thm:quantIFT}) we deduce the existence of a radius $r_0=r_0\left(\|\theta'\|_{C^{1,1}(V_0)}, |D\theta'(x_0)^{-1}|,\beta\right)$, $V_0$ and $W_0$ respectively open neighborhoods of $\widehat{x_0}$ and $\theta'(\widehat{x_0})$ such that $\theta'$ is a $C^{1,1}$-diffeomorphism from $V_0$ onto $W_0$, with 
\begin{equation}\label{eq:V0_W0}B_{r_0}(\widehat{x_0})\subset V_0, B_{r_0}(\theta'(\widehat{x_0}))\subset W_0\end{equation} and 
\begin{equation}\label{eq:theta'C11} \|(\theta')^{-1}\|_{C^{1,1}(W_0)}\leq C\left(\|\theta'\|_{C^{1,1}(V_0)},|D\theta'(x_0)^{-1}|\right)\end{equation}
Now, by definition of $\theta'$ and $\theta$, and since $|\widehat{u}(x)|\geq m$, it holds $\|\theta'\|_{C^{1,1}(V_0)}\leq C\left(\|u\|_{C^{1,1}(\Om)},m^{-1}\right)$. On the other hand, by \eqref{eq:Dtheta'} we have $|(D\theta')(x_0)^{-1}|\leq C(|t_0|,m^{-1})$. As $K\subset B_M(0)$ we have $|t_0|\leq M$ so that \eqref{eq:V0_W0} and \eqref{eq:theta'C11} become respectively
\begin{equation}\label{eq:spherical_radius}B_{r_0}(\widehat{x_0})\subset V_0, B_{r_0}(\theta'(\widehat{x_0}))\subset W_0, \text{ with } r_0=r_0\left(\beta,m,M,\|u\|_{C^{1,1}(\Om)}\right)\end{equation}
and 
\[\|(\theta)'^{-1}\|_{C^{1,1}(W_0)}\leq C\left(m,M,\|u\|_{C^{1,1}(\Om)}\right)\]
Recalling that $\theta'_{|\Om\times\{0\}}=\theta$ there exists a constant $C>0$ such that $\|\theta^{-1}\|_{C^{1,1}(W_0\cap \partial B_m)}\leq\|(\theta')^{-1}\|_{C^{1,1}(W_0)}$. Hence, by \eqref{eq:rho_circ_theta} and the latter estimate of $\theta'$ we get
\begin{align}\nonumber\|\rho_{K}\|_{C^{1,1}(W_0\cap \partial B_m)}&\leq \|\rho_{K}\circ\theta\|_{C^{1,1}(V_0\cap H)}\|\theta^{-1}\|_{C^{1,1}(W_0\cap \partial B_m)}\\ &\leq C, \text{ with } C=C\left(m,M,\|u\|_{C^{1,1}(\Om)}\right)\label{eq:spherical_rho}\end{align}

Using the estimates of $\beta$ and $\|u\|_{C^{1,1}(\Om)}$ found in Step 1 we finally get that the radius $r_0$ and constant $C$ respectively from \eqref{eq:spherical_radius} and \eqref{eq:spherical_rho} only depend on $N,\Lambda,\eps,m,M$.

\textbf{Step 3: global estimate of $\rho_{K}$:} relying on the local estimate \eqref{eq:spherical_rho} of $\rho_{K}$ proven in Step 2 we now estimate $\|\rho_{K}\|_{C^{1,1}(\partial B_m)}$.

According to Step 2, for any $\phi\in\partial B_m$ there exists $W_\phi\subset \R^N$ an open neighborhood of $\phi$ such that 
\[B_{r_0}(\phi)\subset W_\phi\]
\[\|\rho_{K}\|_{C^{1,1}(W_\phi\cap\partial B_m)}\leq C\]
where $r_0= r_0(N,\Lambda,\eps,m,M)$ and $C=C(N,\Lambda,\eps,m,M)$. Using a standard compactness argument over $\partial B_m$ we find $\eta>0$ only depending on $r_0$ such that for any $\phi,\psi\in\partial B_m$ with $|\phi-\psi|\leq\eta$
\[\frac{|\rho_{K}(\phi)-\rho_{K}(\psi)|}{|\phi-\psi|}\leq C,\ 
\frac{|\nabla \rho_{K}(\phi)-\nabla\rho_{K}(\psi)|}{|\phi-\psi|}\leq C\] 
Combining these with a global bound $\|\rho_{K}\|_{W^{1,\infty}(\partial B_m)}\leq C$ we deduce that the same estimates hold for $|\phi-\psi|\geq\eta$ so that we finally have
\begin{equation}\label{eq:spherical_C11} \|\rho_{K}\|_{C^{1,1}(\partial B_m)}\leq C\end{equation} for some $C=C(N,\Lambda,\eps,m,M)$.

\textbf{Conclusion.} Let $h(\phi):=\rho_{K}(m\phi)-1$ for each $\phi\in \partial B$. Then we have that $K=(\text{Id}+hn_B)(B)$ and \eqref{eq:spherical_C11} ensures that 
\[||h\|_{C^{1,1}(\partial B)}\leq  C\]
for some $C=C(N,\Lambda,\eps,m,M)>0$. This concludes the proof.
\end{proof}

\subsection{Proof of Theorem \ref{th:mainthm}}
In this subsection we perform the \textit{selection principle}, relying both on the convergence result for quasi-minimizers (Corollary \ref{thm:cor_LP}) and the strict minimality of the ball in a $C^{1,\alpha}$ neighborhood shown in Theorem \ref{cor:min_smooth}. 

We will use the fact that $\lambda_1$ satisfies some kind of Lipschitz hypothesis for the Volume distance. This is stated in next Proposition.

\begin{prop}\label{eq:lambdaR_reform}
Let $N\geq2$. Let $D\in\K^N$ and $0<V_0<|D|$. There exists $C=C(V_0,D)$ such that for all convex bodies $K_1,K_2\subset D$ with $|K_1|,|K_2|\geq V_0$ it holds
\begin{equation}\label{eq:strongLip}|\lambda_1(K_1)-\lambda_1(K_2)|\leq C|K_1\Delta K_2|.\end{equation}
\end{prop}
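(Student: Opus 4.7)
The plan is to prove this Lipschitz estimate via a monotonicity reduction followed by an interpolation through Minkowski convex combinations and an application of Hadamard's shape derivative formula for $\lambda_1$.

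\emph{Reduction.} If $|K_1\triangle K_2|\geq V_0/2$ the estimate is trivial: since every convex body $K\subset D$ with $|K|\geq V_0$ contains a ball of some uniform radius $r_0=r_0(V_0,D)>0$ by John's ellipsoid theorem, monotonicity of $\lambda_1$ and Faber--Krahn give $\lambda_1(D)\leq\lambda_1(K)\leq \lambda_1(B_{r_0})$, so $|\lambda_1(K_1)-\lambda_1(K_2)|$ is controlled by a constant depending only on $V_0$ and $D$. Otherwise, let $K_3:=K_1\cap K_2$, still a convex body with $|K_3|\geq V_0/2$; by monotonicity $\lambda_1(K_3)\geq \max(\lambda_1(K_1),\lambda_1(K_2))$ and $|K_3\triangle K_i|\leq |K_1\triangle K_2|$, so the statement reduces to proving the one-sided inequality $\lambda_1(K')-\lambda_1(K)\leq C|K\setminus K'|$ for convex bodies $K'\subset K$ in $D$ with both volumes $\geq V_0/2$.

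\emph{Minkowski interpolation and conclusion.} I will consider the path of convex bodies $K_s:=(1-s)K'+sK$ for $s\in[0,1]$: one has $K'\subset K_s\subset K\subset D$, $|K_s|\geq V_0/2$, and $h_{K_s}=(1-s)h_{K'}+sh_K$, so the outward normal velocity of $\partial K_s$ in direction $n_s$ equals $(h_K-h_{K'})(n_s)\geq 0$. The classical first variation of volume from mixed volume theory reads
\[
\frac{d|K_s|}{ds}=\int_{\partial K_s}(h_K-h_{K'})(n_s)\,d\sigma,
\]
and integrates over $s\in[0,1]$ to $|K|-|K'|=|K\setminus K'|$. Hadamard's shape derivative formula for $\lambda_1$ gives in parallel
\[
\frac{d}{ds}\lambda_1(K_s)=-\int_{\partial K_s}(\partial_{n_s}u_{K_s})^2(h_K-h_{K'})(n_s)\,d\sigma,
\]
where $u_{K_s}$ is the $L^2$-normalized nonnegative first Dirichlet eigenfunction of $K_s$. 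The key analytic input is a uniform Lipschitz bound $\|\nabla u_{K_s}\|_{L^\infty(K_s)}\leq L=L(V_0,D)$, which follows from $W^{2,p}$ elliptic regularity for convex domains (Kadle\v{c} and its $L^p$ extensions) combined with Morrey's embedding $W^{2,p}\hookrightarrow C^{1,\alpha}$ for $p>N$, using the uniform bounds on $\lambda_1(K_s)$ and $\|u_{K_s}\|_{L^\infty}$ (the latter via Moser iteration). Putting everything together and integrating in $s$,
\[
\lambda_1(K')-\lambda_1(K)=\int_0^1\int_{\partial K_s}(\partial_{n_s}u_{K_s})^2(h_K-h_{K'})(n_s)\,d\sigma\,ds\leq L^2|K\setminus K'|,
\]
which is the desired estimate.

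\emph{Main obstacle.} The delicate points are the rigorous justification of both first variation formulas when $K$ and $K'$ are merely convex bodies (without any a priori $C^{1,1}$ regularity), and the uniform Lipschitz bound on the eigenfunctions. The plan is to approximate $K$ and $K'$ by smooth convex bodies via mollification of their support functions, apply the classical smooth identities along the approximation, and pass to the limit using the uniform regularity estimates and the continuity of $\lambda_1$ under Hausdorff convergence of convex bodies.
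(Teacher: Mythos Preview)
Your reduction via the two cases ($|K_1\triangle K_2|\gtrless V_0/2$) and the passage through the intersection $K_3=K_1\cap K_2$ is exactly what the paper does. Where you diverge is in the main estimate for nested convex bodies $K'\subset K$: the paper does not prove this directly but invokes \cite[Theorem~3.2 and Remark~3.3]{LP23} as a black box (that result gives the Lipschitz bound for convex bodies sharing a fixed inner set $D'$, and the paper manufactures such a $D'$ as a ball inside $K_1\cap K_2$ using the uniform inradius bound). Your Minkowski-interpolation argument is a genuine alternative that makes the proof self-contained: the volume first variation is classical convex geometry, and pairing Hadamard's formula with a uniform eigenfunction Lipschitz bound gives exactly the comparison $-\tfrac{d}{ds}\lambda_1(K_s)\leq L^2\tfrac{d}{ds}|K_s|$ one integrates. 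The price is the two technical points you already flag: (a) justifying Hadamard's identity along the Minkowski path when $K,K'$ are merely convex, and (b) the uniform bound $\|\nabla u_{K_s}\|_{L^\infty}\leq L(V_0,D)$. Both are accessible by the smooth approximation you sketch, but your attribution of (b) to ``Kadle\v{c} and its $L^p$ extensions'' is loose---Kadlec is $p=2$, and a domain-uniform $W^{2,p}$ estimate for $p>N$ on general convex bodies is not immediate; a cleaner route to the global Lipschitz bound is a direct barrier argument at boundary points via supporting half-spaces (exploiting the uniform inradius and diameter), combined with interior gradient estimates and the $L^\infty$ bound $\|u_{K_s}\|_{L^\infty}\leq e^{1/8\pi}\lambda_1(K_s)^{N/4}$. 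In short: your argument is correct and more explicit than the paper's, at the cost of having to carry out the approximation carefully; the paper's proof is shorter because it outsources the analytic core to \cite{LP23}.
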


Note that this Lipschitz-type property is an improvement of the result obtained for $\lambda_1$ in \cite[Theorem 3.2 and Remark 3.3]{LP23}. We refer also to \cite[Theorem 4.23] {BL08}  for a similar result proven for a different class of sets.

\begin{proof}[Proof of Proposition \ref{eq:lambdaR_reform}]
It was proven in \cite[Theorem 3.2 and Remark 3.3]{LP23} that for any $D'\subset D\in \K^N$ there exists $C=C(D',D)$ such that for all $D'\subset K_1\subset D$, $D'\subset K_2\subset D$ it holds
\begin{equation}|\lambda_1(K_1)-\lambda_1(K_2)|\leq C|K_1\Delta K_2|\label{eq:lambdaLip}\end{equation}

Let then $K_1,K_2\subset D$ with $|K_1|,|K_2|\geq V_0$. Assume first that $|K_1\Delta K_2|\geq V_0/2$. Thanks to (ii) in Proposition \ref{lem:compact} we can find $\eps=\eps(V_0, D)>0$ independent of $K_1, K_2$ such that the inradii satisfy $r_{K_1},r_{K_2}\geq\eps$. As a consequence, for $i=1,2$ there exists $x_i\in \R^N$ such that $B_\eps(x_i)\subset K_i$. By monotonicity of $\lambda_1$ we deduce
\begin{equation}\label{eq:lambda_Lip_pf}|\lambda_1(K_1)-\lambda_1(K_2)|\leq 2\lambda_1(B_\eps(0))\leq 4V_0^{-1}\lambda_1(B_\eps(0))|K_1\Delta K_2|\end{equation}
Assume otherwise that $|K_1\Delta K_2|\leq V_0/2$. Then we have 
\[|K_1\cap K_2|=|K_1|-|K_1\setminus K_2|\geq V_0/2\]
Using again (ii) from Proposition \ref{lem:compact} we can therefore find $\eps'=\eps'(V_0,D)>0$  such that the inradius of the convex body $K_1\cap K_2\subset D$ satisfies $r_{K_1\cap K_2}\geq\eps'$. Hence, there exists $x\in D$ such that $B_{\eps'}(x)\subset K_1\cap K_2\subset K_i$, $i=1,2$. Letting $R>0$ be such that $B_R(x)\supset D$, we have $B_{\eps'}(0)\subset K_i-x\subset B_{R}(0)$ for $i=1,2$ and we therefore use property \eqref{eq:lambdaLip} to deduce 
\begin{align*}|\lambda_1(K_1)-\lambda_1(K_2)|&=|\lambda_1(K_1-x)-\lambda_1(K_2-x)|\\&\leq C|(K_1-x)\Delta(K_2-x)|\\&=C|K_1\Delta K_2|\end{align*}
This estimate together with \eqref{eq:lambda_Lip_pf} gives the conclusion.
\end{proof}

We can now pass to the proof of Theorem \ref{th:mainthm}. 

For convex bodies $K_1$ and $K_2$ the notation $d_H(K_1,K_2)$ refers to the usual Hausdorff distance between $K_1$ and $K_2$ (see Section \ref{sect:Apphausd} in the Appendix for some facts about the Hausdorff distance for convex sets).

\begin{proof}[Proof of Theorem \ref{th:mainthm}]
\textbf{Step 1: penalization.} As a preparation of the selection procedure from Step 2 below, we prove in this step that if we let $D\in\K^N$, $0<V_0<|D|$, $a\in\R,\mu\geq0$ and set $R=-\lambda_1+\mu||K\Delta B|-a|$, then a minimizer $K^*$ of
\begin{equation}\inf\left\{P(K)+R(K), K\in\K^N,\ K\subset D,\ |K|=V_0\right\}\label{eq:minVol}\end{equation} verifying $\delta:=d(K^*,\partial D)>0$ is a $(\Lambda,\eps)$-q.m.p.c.c. (see Definition \ref{def:qmpcc}) for some $\Lambda=\Lambda\left(V_0,\delta,\mu,D\right)$ and $\eps=\eps\left(V_0,\delta,\mu,D\right)$. This result is an adaptation of \cite[Lemma 2.11]{LP23}.

Let $K^*$ be such a minimizer, and set $0<v_0 \leq V_0$ and $\delta:=d(K^*,\partial D)>0$. We introduce the class 
\[\mathcal{A}_{v_0,\delta}:=\{K\in\K^N,\ K\subset D,\ |K|\geq v_0,\ d(K,\partial D)\geq\delta\}\]
which is compact for $d_H$, by Proposition \ref{lem:compact} and continuity of the volume for the Hausdorff distance. Note that the set $K^*$ belongs to this class for $v_0=V_0$. Set 
\[\forall \eps>0, \ \mathcal{O}_{\eps}(K^*):=\{K\in \K^N,\ K\subset K^*, |K^*\setminus K|\leq \eps\}\]
and let for any convex body $K\subset D$ and $t\in[0,1]$ 
\[K_t:=(1-t)K+tD\]

We first claim that there exists constants $(\eps_0,c,t_0)\in(0,\infty)^3$ depending only on $V_0, \delta$ and $D$ (hence independent of the minimizer $K^*$) such that
\begin{equation}\label{eq:below_vol}\forall K\in\mathcal{O}_{\eps_0}(K^*),\ \forall t\in[0,t_0],\ |K_t|-|K|\geq ct.\end{equation}
Setting $f_K(t):=|K_t|$, it is shown in \cite[Lemma 2.11]{LP23} that $f_K$ is a polynomial in $t$ of degree $N$ with coefficients continuous in $K$ for $d_H$, and that $f_K'(0)$ is positive whenever $K\subsetneq D$. Set $\eps_0:=V_0/2$. By compactness of $\mathcal{A}_{\eps_0,\delta}$ inside $\{K\in \K^N,K\subsetneq D\}$ and continuity of $K\mapsto f_K'(0)$ for $d_H$, then one can find $c=c(V_0,\delta,D)>0$ such that for any $K\in\mathcal{A}_{\eps_0,\delta}$ it holds $f_K'(0)\geq c$. Any $K\in \mathcal{O}_{\eps_0}(K^*)$ verifies $|K|\geq V_0/2$, so that for such $K$ it holds $K\in\mathcal{A}_{\eps_0,\delta}$ hence $f_K'(0)\geq c$. Since the coefficients of the polynomial $f_K(t)$ are continuous in $K$ for $d_H$, they are uniformly bounded for $K\in \mathcal{A}_{\eps_0,\delta}$. This together with the lower bound on  $f_K'(0)$ yields the above estimate.

The existence of $C=C(D)$ such that
\[\forall t\in[0,1], \forall K\in\K^N \text{ with } K\subset D,\ P(K_t)-P(K)\leq Ct\]
is proven in \cite[equation (54)]{LP23}. As a consequence, this together with \eqref{eq:below_vol} gives
\begin{equation}\label{eq:perim_above}\forall K\in\mathcal{O}_{\eps_0}(K^*),\ \forall t\in[0,t_0], \ \ P(K_t)-P(K)\leq C'\left(|K_t|-|K|\right)\end{equation}
with $C':=C/c$.

Since any $K\in\mathcal{O}_{\eps_0}(K^*)$ verifies $|K|\geq V_0/2$ we can apply Proposition \ref{eq:lambdaR_reform} to get the existence of $C=C(V_0,D)$ such that for all $t\in[0,1]$ and $ K\in\mathcal{O}_{\eps_0}(K^*)$
\begin{align}\nonumber R(K_t)-R(K)&=\lambda_1(K)-\lambda_1(K_t)+\mu\left(||K_t\Delta B|-a|-||K\Delta B|-a|\right),
\\\nonumber & \leq C|K_t\setminus K|+\mu||K_t\Delta B|-|K\Delta B||,\\
&\leq (C+\mu)|K_t\setminus K|.\label{eq:R_above}
\end{align}

Let us now show that for $\eps:=\min\{\eps_0,ct_0\}$ there exists $\Lambda=\Lambda(V_0,\delta,D,\mu)$ such that a minimizer $K^*$ of \eqref{eq:minVol} is a minimizer of 
\begin{equation}\label{eq:minK*2}\inf\{P+R+\Lambda||K|-V_0|,\ K\in\mathcal{O}_{\eps}(K^*)\}.\end{equation}
Since $|K_{t_0}|-|K^*|=|K_{t_0}|-|K|+|K|-|K^*|\geq ct_0-\eps\geq0$ then by continuity of $t\mapsto|K_t|$ there exists $t\in[0,t_0]$ such that $|K_t|=|K^*|=V_0$. Hence by minimality and using \eqref{eq:perim_above} and \eqref{eq:R_above} we get
\[P(K^*)+R(K^*)\leq P(K_t)+R(K_t)\leq P(K)+R(K)+\Lambda||K|-V_0||\]
for some $\Lambda=\Lambda(V_0,\delta,D,\mu)$, which ensures the minimality of $K^*$ for \eqref{eq:minK*2}.

Therefore, if $K\in \K^N$ with $K\subset K^*$ and $|K^*\setminus K|\leq \eps$ then the computation leading to \eqref{eq:R_above} with $(K^*,K)$ in place of $(K_t,K)$ gives
\[P(K^*)-P(K)\leq \left(C+\mu+\Lambda\right)|K^*\setminus K|\]
so that $K^*$ is a $(\Lambda',\eps)$-q.m.p.c.c where $\Lambda':=\Lambda+C+\mu$ and $\eps$ only depend on $V_0, \delta, \mu$ and $D$. This finishes the proof of the first step.

\textbf{Step 2: selection procedure.} Although the \textit{selection principle} was first introduced in \cite{CL12}, the way we display the argument in this step is more inspired of \cite{AFM13}. 
Let $0<c<c^*$. Recall the notation $\mathcal{J}_c:=P-c\lambda_1$. Let us assume in order to obtain a contradiction that the conclusion of Theorem \ref{th:mainthm} is false. Then there exists a sequence of convex bodies $(K_j)_{j\in\N}$ such that 
\[\begin{cases} \forall j\in\N,\ \mathcal{J}_c(K_j)<\mathcal{J}_c(B),\\ |K_j\Delta B|\rightarrow0.\end{cases}\]

Thanks to Proposition \ref{prop:cvgconv_hausd}, $K_j\rightarrow B$ in the Hausdorff sense, so that there exists $D\in\K^N$ such that $K_j\subset D$ for every $j$. We can assume without loss of generality that $B\Subset \text{Int}(D)$. Thanks to Proposition \ref{eq:lambdaR_reform} the functional $\lambda_1$ is lower-semi-continuous for the volume distance, and we can apply the existence result \cite[Theorem 3.4 (i)]{LP23} to get that for any fixed $\mu>0$ there exists for each $j$ a solution to the problem 
\begin{equation}\label{eq:inf_select}\inf\left\{\mathcal{J}_c(K)+\mu\left||K\Delta B|-|K_j\Delta B|\right|, \ |K|=|B|, \ K\subset D, \ K\in \K^N\right\}.\end{equation}
Assuming that the value of $\mu$ has been fixed (we choose $\mu$ later on), we let $\widetilde{K_j}$ be a solution.

Thanks to (i) from Proposition \ref{lem:compact} there exists a convex body $\widetilde{K}\subset D$ with $|\widetilde{K}|=|B|$ such that (up to subsequence) $\widetilde{K_j}\rightarrow \widetilde{K}$ in the Hausdorff sense and in measure. 
We have $\lambda_1(\widetilde{K_j})\rightarrow \lambda_1(\widetilde{K})$ using Proposition \ref{eq:lambdaR_reform}, and $P(\widetilde{K_j})\rightarrow P(\widetilde{K})$ by \cite[Proposition 2.4.3 (ii)]{BB05}. Now, from optimality and recalling $\mathcal{J}_c(K_j)<\mathcal{J}_c(B)$ we can write 
\begin{equation}\label{eq:optC_j}\mathcal{J}_c(\widetilde{K_j})+\mu||\widetilde{K_j}\Delta B|-|K_j\Delta B||\leq \mathcal{J}_c(K_j)< \mathcal{J}_c(B)\end{equation} so that we get at the limit 
\begin{equation}\label{eq:limitC}\mathcal{J}_c(\widetilde{K})+\mu|\widetilde{K}\Delta B|\leq \mathcal{J}_c(B).\end{equation}
Thanks to Proposition \ref{eq:lambdaR_reform} and using the isoperimetric inequality we get
\begin{align*}\mathcal{J}_c(\widetilde{K})-\mathcal{J}_c(B)&=\left(P(\widetilde{K})-P(B)\right)+c\left(\lambda_1(B)-\lambda_1(\widetilde{K})\right)\\
& \geq c(\lambda_1(B)-\lambda_1(\widetilde{K}))\\
&\geq -C|\widetilde{K}\Delta B|
\end{align*} 
where the constant $C$ only depends on $c$, $D$ and $N$. Injecting this into \eqref{eq:limitC} provides
\[-C|\widetilde{K}\Delta B|+\mu|\widetilde{K}\Delta B|\leq 0\] 
so that we get $\widetilde{K}=B$ if $\mu$ is chosen bigger than $C$ in \eqref{eq:inf_select}.

Therefore $\widetilde{K_j}\rightarrow B$ in measure and Hausdorff distance. Since we have chosen $B\Subset \text{Int}(D)$ we find a convex body $\widetilde{D}\subset D$ with $d(\widetilde{D},\partial D)>0$ and such that for $j$ sufficiently large
\[\widetilde{K_j}\subset \widetilde{D}.\]
By construction of the $\widetilde{K_j}$, we deduce from Step 1 that each $\widetilde{K_j}$ is a $(\Lambda,\eps)$-q.m.p.c.c.  with parameters independent of $j$. We can therefore apply Corollary \ref{thm:cor_LP} to get the existence of $h_j\in C^{1,1}(\partial B)$ such that up to subsequence
\[\widetilde{K_j}=(\text{Id}+h_j\nu_B)(B) \text{ and } \|h_j\|_{C^{1,\alpha}(\partial B)}\rightarrow0\] 
for $\alpha$ chosen to satisfy Theorem \ref{cor:min_smooth}. We can therefore apply Theorem \ref{cor:min_smooth} to deduce that for sufficiently large $j$,
\[\mathcal{J}_c(\widetilde{K_j})\geq \mathcal{J}_c(B)\]
But this enters in contradiction with \eqref{eq:optC_j}, thus concluding the proof of the Theorem.
\end{proof}

\section{Appendix}\label{sect:app}

\subsection{Quantified Inverse Function Theorem}

\begin{theorem}[Quantified IFT]\label{thm:quantIFT}
Let $N\in\N^*$. Let $V:=B_r(\widehat{x_0})\subset\R^N$ for some $\widehat{x_0}\in V$ and $r>0$. Let $f\in C^{1,1}(V,\R^N)$ with $Df(\widehat{x_0})$ invertible. Then there exists $V_0\subset V, W_0\subset \R^N$ and $\rho=\rho\left(\|f\|_{C^{1,1}},|Df(x_0)^{-1}|,r\right)$ only depending on the indicated parameters such that $f$ is a $C^{1,1}$ diffeomorphism from  $V_0$ onto $W_0$ and 
\[B_{\rho}(\widehat{x_0})\subset V_0,\ B_{\rho}(f(\widehat{x_0}))\subset W_0,\]
\[ \|f^{-1}\|_{C^{1,1}(W_0,V_0)}\leq C\left(\|f\|_{C^{1,1}(V,\R^N)},|Df(x_0)^{-1}|,r\right).\]
\end{theorem}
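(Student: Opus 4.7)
The plan is to run the standard contraction-mapping proof of the inverse function theorem, keeping careful track of how the various thresholds depend on $\|f\|_{C^{1,1}}$, $|Df(\widehat{x_0})^{-1}|$ and $r$. Denote $L:=\|f\|_{C^{1,1}(V,\R^N)}$ (so in particular $\mathrm{Lip}(Df)\le L$) and $M:=|Df(\widehat{x_0})^{-1}|$. For each $y\in\R^N$ define the auxiliary map
\[T_y(x):=x+Df(\widehat{x_0})^{-1}\bigl(y-f(x)\bigr),\]
so that fixed points of $T_y$ are exactly preimages of $y$ by $f$. Since $DT_y(x)=Df(\widehat{x_0})^{-1}\bigl(Df(\widehat{x_0})-Df(x)\bigr)$, the Lipschitz bound on $Df$ yields $|DT_y(x)|\le ML|x-\widehat{x_0}|$ on $V$.

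First I would fix $\delta:=\min\bigl\{r,\tfrac{1}{2ML}\bigr\}$, so that on $\overline{B_\delta(\widehat{x_0})}\subset V$ we have $|DT_y(x)|\le 1/2$ and $T_y$ is $1/2$-Lipschitz. Taking $\eta:=\delta/(2M)$ and $y\in W_0:=B_\eta(f(\widehat{x_0}))$, the trivial estimate $|T_y(\widehat{x_0})-\widehat{x_0}|\le M|y-f(\widehat{x_0})|\le\delta/2$ combined with the contraction bound shows that $T_y$ sends $\overline{B_\delta(\widehat{x_0})}$ into itself, so Banach's fixed point theorem yields a unique $x=:f^{-1}(y)\in\overline{B_\delta(\widehat{x_0})}$ with $f(x)=y$. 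Setting $V_0:=f^{-1}(W_0)\cap B_\delta(\widehat{x_0})$ gives the open set on which $f$ is a bijection onto $W_0$. Since $f$ is $L$-Lipschitz on $V$, choosing $\rho:=\min\{\eta,\eta/L\}=\eta/\max\{1,L\}$ guarantees both $B_\rho(f(\widehat{x_0}))\subset W_0$ and $B_\rho(\widehat{x_0})\subset V_0$, and this $\rho$ indeed depends only on $L$, $M$ and $r$.

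Next I would derive the $C^{1,1}$ bounds on $f^{-1}$. A standard manipulation with the fixed point equation gives, for $y_1,y_2\in W_0$ and $x_i:=f^{-1}(y_i)$,
\[x_1-x_2=T_{y_1}(x_1)-T_{y_1}(x_2)+Df(\widehat{x_0})^{-1}(y_1-y_2),\]
whence $|x_1-x_2|\le 2M|y_1-y_2|$ using the contraction estimate, so $f^{-1}$ is $2M$-Lipschitz. On the other hand, the Neumann series (applied because $ML\delta\le 1/2$) shows $Df(x)$ is invertible for $x\in V_0$ with $|Df(x)^{-1}|\le 2M$; the usual chain rule then gives $Df^{-1}(y)=Df(f^{-1}(y))^{-1}$ with $|Df^{-1}|\le 2M$, and the identity $A^{-1}-B^{-1}=A^{-1}(B-A)B^{-1}$ together with $\mathrm{Lip}(Df)\le L$ yields $\mathrm{Lip}(Df^{-1})\le 8LM^3$. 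This bounds $\|f^{-1}\|_{C^{1,1}(W_0,V_0)}$ in terms of $L$, $M$ and $r$ (the $L^\infty$-bound on $f^{-1}$ being controlled by $\delta\le r$).

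I do not expect any real obstacle: the argument is entirely the classical quantitative inverse function theorem and the only thing to check is that the successive thresholds $\delta$, $\eta$, $\rho$ and the constants in the final Lipschitz estimates are all explicit functions of $(L,M,r)$ alone, which is visible from the formulas above. The only mildly technical point is to verify that the fixed point actually lies in the open (not closed) ball so that $V_0$ is open and $f^{-1}$ is differentiable there, which is immediate from $|T_y(\widehat{x_0})-\widehat{x_0}|\le\delta/2<\delta$ combined with the contraction.
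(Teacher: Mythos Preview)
Your proposal is correct and follows essentially the same approach as the paper: the paper also runs the contraction-mapping argument with the auxiliary map $\phi_y(x)=x-Df(\widehat{x_0})^{-1}(f(x)-y)$, chooses the contraction radius from the Lipschitz bound on $Df$, and then in a second step uses the Neumann series to control $|Df(x)^{-1}|$ and the identity $A^{-1}-B^{-1}=A^{-1}(B-A)B^{-1}$ to bound the Lipschitz constant of $Df^{-1}$. Your version is slightly more explicit with the constants (e.g.\ the $8LM^3$), but the structure and all key ingredients coincide.
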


\begin{proof}
\textbf{Step 1:} By following the usual proof of the Inverse function Theorem we first show that $f$ is a $C^{1}$ diffeomorphism from $V_0'\subset V$ to $W_0':=B_{\delta}(f(\widehat{x_0}))$ with 
\[\delta=\delta\left(\|Df\|_{C^{0,1}(V)},|Df(\widehat{x_0})^{-1}|\right).\]
Since the proof is classical we only emphasize on the details needed to quantify the size of the neighborhood $W_0'$.

We will keep the notation $V$ for the set $V:=B_r(\widehat{x_0})$. Let for any $y\in\R^N$ the function $\phi_y:V\rightarrow\R^N$ be defined by
\[\forall x\in V,\ \phi_y(x):=x-\left(Df(\widehat{x_0})\right)^{-1}(f(x)-y).\]
Then $\phi_y$ is $C^{1,1}$, for $x\in V$ its differential $D\phi_y(x)=\text{Id}-\left(Df(\widehat{x_0})\right)^{-1}Df(x)$ is independent of $y$, and we have for $x\in V$,
\begin{align*}|D\phi_y(x)|&\leq |Df(\widehat{x_0})^{-1}||Df(x)-Df(\widehat{x_0})|,\\
&\leq |Df(\widehat{x_0})^{-1}|\|Df\|_{C^{0,1}(V)}|x-\widehat{x_0}|.\end{align*}
As a consequence there exists 
\[r'=r'\left(|Df(\widehat{x_0})^{-1}|,\|Df\|_{C^{0,1}(V)},r\right)>0\]
such that for all $x\in B_{r'}(\widehat{x_0})$ it holds $|D\phi_y(x)|\leq 1/2$. We thus get that for all $y\in\R^N$, $\phi_y$ is $1/2$-Lipschitz over $B_{r'}(\widehat{x_0})$. Now, we have 
\[|Df(\widehat{x_0})^{-1}(f(\widehat{x_0})-y)|\leq |Df(\widehat{x_0})^{-1}||f(\widehat{x_0})-y|\leq r'/2\]
when $y\in B_{\delta}(f(\widehat{x_0}))$ with $\delta:=\frac{r'}{2}|Df(\widehat{x_0})^{-1}|$. Hence, 
\begin{align*}\forall y\in B_\delta(f(\widehat{x_0})),\ \forall x\in \overline{B_{r'}}(\widehat{x_0}),\ |\phi_y(x)-\widehat{x_0}|&\leq |\phi_y(x)-\phi_y(\widehat{x_0})|+|\phi_y(\widehat{x_0})-\widehat{x_0}|,\\
&\leq |x-\widehat{x_0}|/2+r'/2,\\
&\leq r'.
\end{align*}
As a consequence, for all $y\in B_\delta(f(\widehat{x_0}))$, $\phi_y$ sends $\overline{B_{r'}}(\widehat{x_0})$ into itself and is $1/2$-Lipschitz. Therefore, for such $y$ the mapping $\phi_y$ has a unique fixed point in $\overline{B_{r'}}(\widehat{x_0})$, meaning that $f(x)=y$ for a unique $x\in \overline{B_{r'}}(\widehat{x_0})$. This gives the existence of $f^{-1}:W_0'\rightarrow V_0'$ with $W_0':=B_\delta(f(\widehat{x_0}))$ and $V_0':=f^{-1}(W_0')$, and one classically shows that $f$ is $C^1$ diffeomorphism from $V_0'$ to $W_0'$ with furthermore $Df^{-1}=(Df(f^{-1}))^{-1}$.

\textbf{Step 2:} Using an explicit expansion of the inverse mapping about $Df(\widehat{x_0})$ we have that 
\begin{equation}\label{eq:control_df-1}|Df(x)^{-1}|\leq 2|Df(\widehat{x_0})^{-1}|\end{equation}whenever $|Df(x)-Df(\widehat{x_0})|\leq 1/(2|Df(\widehat{x_0})^{-1}|)$. But this latter condition is fulfilled if $x\in V_0:=B_{\widetilde{r}}(\widehat{x_0})$ for some $\widetilde{r}$ depending on $r'$, $|Df(\widehat{x_0})^{-1}|$ and $\|f\|_{C^{1,1}(V)}$ only. Moreover, $f$ is a $C^1$ diffeomorphism from $V_0$ to $W_0:=f(V_0)$, and one can find $\widetilde{\delta}=\widetilde{\delta}\left(\widetilde{r},\|f^{-1}\|_{C^{0,1}(W_0)}\right)$ such that $W_0\supset B_{\widetilde{\delta}}(f(\widehat{x_0}))$. Now, thanks to \eqref{eq:control_df-1} and $Df^{-1}=(Df(f^{-1}))^{-1}$ we have
\begin{equation}\label{eq:Lipf-1}\|f^{-1}\|_{L^\infty(W_0)}\leq r, \ \|Df^{-1}\|_{L^\infty(W_0)}\leq 2|Df(\widehat{x_0})^{-1}|.\end{equation}
Letting $y,y'\in W_0$, since $f^{-1}(y), f^{-1}(y')\in V_0$ we can use \eqref{eq:control_df-1} and \eqref{eq:Lipf-1} to get
\begin{align*}|Df^{-1}(y)-Df^{-1}(y')|&\leq \|(Df)^{-1}\|_{L^\infty(V_0)}^2|Df(f^{-1}(y))-Df(f^{-1}(y'))|\\
&\leq C|y-y'|
\end{align*}
for some $C=C\left(|Df(\widehat{x_0})^{-1}|,\|f\|_{C^{1,1}(V)},r'\right)$. Combined with \eqref{eq:Lipf-1}, and recalling the estimate of $r'$ from Step 1 we thus get 
\[\|f^{-1}\|_{C^{1,1}(W_0)}\leq C\left(|Df(\widehat{x_0})^{-1}|,\|f\|_{C^{1,1}(V)},r\right).\]
Setting $\rho:=\min\{\widetilde{\delta},\widetilde{r}\}$ we have proved the Theorem.
\end{proof}

\subsection{Fractional Sobolev spaces} In this paragraph we state some standard facts about Sobolev spaces on the boundary of a $C^{1,1}$ open set $\Om\subset\R^N$. 

Let us first quickly recall the definition of the Sobolev-Slobodeckij spaces $W^{s,p}(\partial \Om)$ for $s\in[-2,2]$. For $k\in\{0,1,2\}$ and $p\in[1,\infty)$ the set $W^{k,p}(\partial\Om)$ is defined as the usual space of functions with $k$ first derivatives lying in $L^p(\partial\Om)$. For any $s\in(0,2)\setminus\{1\}$ and $p\in[1,\infty)$ we let the vector space
\[W^{s,p}(\partial \Om):=\left\{a\in W^{[s],p}(\partial \Om)\ :\ \forall |\alpha|=[s],\ \frac{|D^\alpha a(x)-D^\alpha a(y)|}{|x-y|^{\frac{N-1}{p}+\{s\}}}\in L^p(\partial\Om\times\partial\Om)\right\}\]
with $|\alpha|:=\sum_{i=1}^N\alpha_i$ for a multi-index $\alpha=(\alpha_1,\ldots,\alpha_N)\in\N^N$, and where $\{s\}$ and $[s]$ stand respectively for the fractional and integer parts of $s$. We then define the $W^{s,p}$ norm accordingly: for $a\in W^{s,p}(\partial\Om)$, we set
\[\|a\|_{W^{s,p}(\partial\Om)}:=\left(\|a\|_{W^{[s],p}(\partial\Om)}^p+\sum_{|\alpha|=[s]}|D^\alpha a|_{W^{\{s\},p}(\partial\Om)}^p\right)^{\frac{1}{p}},\]
with $|\cdot|_{W^{\{s\},p}(\partial\Om)}$ the $W^{\{s\},p}(\partial\Om)$ semi-norm, \textit{i.e.}
\[|D^\alpha a|_{W^{\{s\},p}(\partial\Om)}^p:=\iint_{\partial\Om\times\partial\Om}\frac{|D^\alpha a(x)-D^\alpha a(y)|^p}{|x-y|^{N-1+\{s\}p}}d\H^{N-1}_xd\H^{N-1}_y\]
where $\H^{N-1}$ denotes the $(N-1)$-dimensional Hausdorff measure. If $p'$ denotes the conjugate Hölder exponent  to $p$ (\textit{i.e.} $\frac{1}{p'}+\frac{1}{p}=1$), the space $W^{-s,p'}(\partial\Om)$ is defined as the dual space of $W^{s,p}(\partial\Om)$:
\[W^{-s,p'}(\partial\Om):=W^{s,p}(\partial\Om).\]
We can now state some facts about the spaces $W^{s,p}(\partial\Om)$.

\begin{prop}\footnote{The author is very grateful to the anonymous referee for pointing out the facts (iii) and (iv) and for providing short proofs, thus enabling to improve the results from Section \ref{sect:IT}.}\label{prop:factFspq}Let $N\geq2$ and $\Om\subset\R^N$ be a $C^{1,1}$ bounded open set. 
\begin{enumerate}[label=(\roman{*})]
\item \label{prop:Sobinj}Let $s\in(0,1)$. Then for all $\alpha\in(s,1)$ and $p\in[1,\infty)$ it holds
\begin{equation*}\label{eq:Hold_incl_Sob}\ C^{0,\alpha}(\partial \Om)\subset  W^{s,p}(\partial \Om),\ C^{1,\alpha}(\partial \Om)\subset  W^{1+s,p}(\partial \Om),\end{equation*}
with continuous injections.
\item \label{prop:difflaw} Let $s\in(0,1)$ and $p\in(1,\infty)$. Then there exists $C>0$ such that for any $a\in W^{1-s,p}(\partial \Om)$ it holds
\begin{equation*}\|\nabla_\tau a\|_{W^{-s,p}(\partial \Om)}\leq C\|a\|_{W^{1-s,p}(\partial \Om)}.\end{equation*}
\item \label{prop:prodCalpha}For each $\alpha\in(\frac{1}{2},1)$ the product law $C^{0,\alpha}(\partial\Om)\cdot H^{1/2}(\partial \Om)\subset H^{1/2}(\partial\Om)$ holds, meaning that for each $\alpha\in(\frac{1}{2},1)$ and any $(a,b)\in C^{0,\alpha}(\partial\Om)\times H^{1/2}(\partial \Om)$ it holds
\[\|ab\|_{H^{1/2}(\partial\Om)}\leq C\|a\|_{C^{0,\alpha}(\partial\Om)}\|b\|_{H^{1/2}(\partial\Om)},\]
for some constant $C>0$ independent of $a$ and $b$.
\item\label{prop:prodH1/2} There exists $p\in(1,2)$ such that the product law $H^{1/2}(\partial\Om)\cdot H^{1/2}(\partial\Om)\subset W^{s,p}(\partial\Om)$ holds for any $s\in(0,\frac{1}{2})$, meaning that for each $s\in(0,\frac{1}{2})$ and any $(a,b)\in H^{1/2}(\partial\Om)\times H^{1/2}(\partial\Om)$ it holds
\[\|ab\|_{W^{s,p}(\partial\Om)}\leq C\|a\|_{H^{1/2}(\partial\Om)}\|b\|_{H^{1/2}(\partial\Om)},\]
for some constant $C>0$ independent of $a$ and $b$.
\end{enumerate}
\end{prop}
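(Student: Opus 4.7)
The four statements are independent and I will address them in turn. Throughout, I would work with a finite atlas of $C^{1,1}$-charts on $\partial\Om$ together with a subordinate partition of unity, so that the Sobolev-Slobodeckij spaces on $\partial\Om$ are equivalent (with norm-controlled equivalence constants) to the corresponding spaces on $\R^{N-1}$ read through the charts. This reduction preserves all the statements because the chart maps are $C^{1,1}$-diffeomorphisms.

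For \ref{prop:Sobinj}, I would simply plug the Hölder inequality $|a(x)-a(y)|\leq \|a\|_{C^{0,\alpha}}|x-y|^\alpha$ into the Gagliardo seminorm and check that
\[\iint_{\partial\Om\times\partial\Om}\frac{|a(x)-a(y)|^p}{|x-y|^{N-1+sp}}\,d\H^{N-1}_xd\H^{N-1}_y\leq \|a\|_{C^{0,\alpha}}^p\iint \frac{d\H^{N-1}_xd\H^{N-1}_y}{|x-y|^{N-1-p(\alpha-s)}}<\infty,\]
where the right-hand integral is finite precisely because $\alpha>s$ makes the diagonal singularity $|x-y|^{-(N-1-p(\alpha-s))}$ integrable against $(N-1)$-dimensional surface measure, while the compactness of $\partial\Om$ handles the far field. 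The $C^{1,\alpha}\subset W^{1+s,p}$ statement follows by applying this to each first derivative of $a$.

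For \ref{prop:difflaw}, after reducing to a chart I would use the well-known fact that each partial derivative $\partial_i:W^{t,p}(\R^{N-1})\to W^{t-1,p}(\R^{N-1})$ is continuous for every $t\in\R$ and $p\in(1,\infty)$. This is standard (it can be checked on the Fourier side via the Bessel-potential characterization of $W^{t,p}$, using that the symbol of $\partial_i$ is an order-one Mikhlin multiplier relative to the Bessel scale). The tangential gradient $\nabla_\tau$ on $\partial\Om$ is built locally from such partial derivatives (times $C^{1,1}$ chart coefficients, whose multiplication is a continuous operator on $W^{s,p}$ and $W^{-s,p}$ for $|s|<1$), giving the claimed bound.

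Item \ref{prop:prodCalpha} I would obtain by a direct bare-hands computation. Writing $(ab)(x)-(ab)(y)=a(x)(b(x)-b(y))+b(y)(a(x)-a(y))$, the first piece contributes at most $\|a\|_{L^\infty}^2|b|_{H^{1/2}}^2$ to the $H^{1/2}$ seminorm squared, while the second piece is bounded by
\[\|a\|_{C^{0,\alpha}}^2\int_{\partial\Om}|b(y)|^2\left(\int_{\partial\Om}|x-y|^{2\alpha-N}\,d\H^{N-1}_x\right)d\H^{N-1}_y,\]
and the inner integral is uniformly bounded in $y$ exactly when $N-2\alpha<N-1$, i.e.\ $\alpha>1/2$, giving a constant times $\|a\|_{C^{0,\alpha}}^2\|b\|_{L^2}^2$. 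Combining with the obvious $L^2$ bound $\|ab\|_{L^2}\leq\|a\|_{L^\infty}\|b\|_{L^2}$ finishes the proof.

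Item \ref{prop:prodH1/2} is the most delicate part, and will require pinning down $p\in(1,2)$ by Sobolev embedding constraints. Using the same splitting as above, I would control the first piece $\iint |a(x)|^p|b(x)-b(y)|^p|x-y|^{-(N-1+sp)}\,dxdy$ via Hölder's inequality with conjugate exponents $q=2/p$ and $q'=2/(2-p)$ applied to the factorization
\[|a(x)|^p|b(x)-b(y)|^p|x-y|^{-(N-1+sp)}=\Bigl[|a(x)|^p|x-y|^{pN/2-(N-1+sp)}\Bigr]\cdot\Bigl[|b(x)-b(y)|^p|x-y|^{-pN/2}\Bigr].\]
The $q$-th power of the second bracket integrates to exactly $|b|_{H^{1/2}}^2$. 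The $q'$-th power of the first bracket decouples as $|a(x)|^{pq'}$ times an integral in $y$ of $|x-y|^{-q'(N-1+sp-pN/2)}$, which is finite (uniformly in $x$) precisely under the condition $s<1/2$, leaving a factor of $\|a\|_{L^{pq'}}^{p}$. The main constraint is then Sobolev embedding $H^{1/2}(\partial\Om)\hookrightarrow L^{pq'}(\partial\Om)$, which forces $pq'=\frac{2p}{2-p}\leq\frac{2(N-1)}{N-2}$ when $N\geq3$, giving the admissible range $p\in\bigl(1,\frac{2(N-1)}{2N-3}\bigr]$ (and any $p<2$ when $N=2$). Picking such a $p$, the other piece $|b(y)|^p|a(x)-a(y)|^p$ is handled symmetrically, and the $L^p$-part is controlled by $\|ab\|_{L^p}\leq\|a\|_{L^{2p}}\|b\|_{L^{2p}}\leq C\|a\|_{H^{1/2}}\|b\|_{H^{1/2}}$ via the same Sobolev embedding. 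The only real subtlety is the bookkeeping of the three conditions ($s<1/2$, the Hölder exponents admissible, $pq'$ below the Sobolev threshold); everything else is routine.
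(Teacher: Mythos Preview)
Your proposal is correct and follows essentially the same route as the paper's own proof. For \ref{prop:Sobinj} and \ref{prop:prodCalpha} your computations are identical to the paper's; for \ref{prop:difflaw} both you and the paper reduce to $\R^{N-1}$ via charts and invoke the standard mapping property of $\partial_i$; for \ref{prop:prodH1/2} you use the very same splitting and the same H\"older step with exponents $\tfrac{2}{p},\tfrac{2}{2-p}$, arriving at the same Sobolev constraint $\tfrac{2p}{2-p}\le\tfrac{2(N-1)}{N-2}$---the paper simply picks the endpoint $p=\tfrac{2N-2}{2N-3}$ while you record the full admissible interval, which is a cosmetic difference.
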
 

\begin{proof}
\begin{enumerate}[label=(\roman{*})]
\item If $a\in C^{0,\alpha}(\partial\Om)$, 
\[\forall x,y\in \partial\Om,\ \frac{|a(x)-a(y)|^p}{|x-y|^{N-1+sp}}\leq |a|^p_{C^{0,\alpha}(\partial\Om)}|x-y|^{-(N-1-p(\alpha-s))}\] where $|\cdot|_{C^{0,\alpha}(\partial\Om)}$ is the $C^{0,\alpha}$ semi-norm on $\partial\Om$. The integrability of \[(x,y)\in \partial\Om\times \partial\Om\mapsto |x-y|^{-(N-1-p(\alpha-s))}\] thus ensures the continuous injection $C^{0,\alpha}(\partial\Om)\subset W^{s,p}(\partial\Om)$. The injection $C^{1,\alpha}(\partial \Om)\subset  W^{1+s,p}(\partial \Om)$ then follows from applying this to $\nabla a$ for some $a\in C^{1,\alpha}(\partial \Om)$.
\item This claim is deduced from the same statement over $W^{s,p}(\R^{N-1})$ spaces by working in local charts (for the  $\R^{N-1}$ case see for instance \cite[Remark 8.10.14]{Bh}). 
\item Let $a\in C^{0,\alpha}(\partial\Om)$ and $b\in H^{1/2}(\partial \Om)$. Then the following chain of inequalities holds:
\begin{align*}|ab|_{H^{1/2}(\partial\Om)}&=\left(\iint_{\partial\Om\times\partial\Om}\frac{|a(x)b(x)-a(y)b(y)|^2}{|x-y|^N}d\H^{N-1}_xd\H^{N-1}_y\right)^{\frac{1}{2}}
\\&\leq \left(\iint_{\partial\Om\times\partial\Om}\frac{|a(x)-a(y)|^2}{|x-y|^N}|b(x)|^2d\H^{N-1}_xd\H^{N-1}_y\right)^{\frac{1}{2}}\\
&+\left(\iint_{\partial\Om\times\partial\Om}\frac{|b(x)-b(y)|^2}{|x-y|^N}|a(y)|^2d\H^{N-1}_xd\H^{N-1}_y\right)^{\frac{1}{2}}
\\&\leq |a|_{C^{0,\alpha}(\partial\Om)}\left(\int_{\partial\Om}\left(\int_{\partial\Om}\frac{d\H^{N-1}_y}{|x-y|^{N-2\alpha}}\right)|b(x)|^2d\H^{N-1}_x\right)^{\frac{1}{2}}
\\&+\|a\|_{L^\infty(\partial\Om)}|b|_{H^{1/2}(\partial\Om)}.
\end{align*}
Now as $\alpha\in(\frac{1}{2},1)$, $N-2\alpha<N-1$ so that 
\[\sup_{x\in \partial\Om}\int_{\partial\Om}\frac{d\H^{N-1}_y}{|x-y|^{N-2\alpha}}<\infty,\]
thus yielding 
\[|ab|_{H^{1/2}(\partial\Om)}\leq C|a|_{C^{0,\alpha}(\partial\Om)}\|b\|_{L^2(\partial\Om)}+\|a\|_{L^\infty(\partial\Om)}|b|_{H^{1/2}(\partial\Om)},\]
for some constant $C=C(N,\partial\Om,\alpha)$, which finishes the proof.
\item Let $s\in(0,\frac{1}{2})$, $a,b\in H^{1/2}(\partial\Om)$ and put
\[p=\begin{cases}
    \frac{2N-2}{2N-3}\in(1,2), \text{ for }N\geq3,\\
    \text{any exponent lying in }(1,2) \text{ if }N=2.
\end{cases}\]
Then we claim that $ab\in W^{s,p}(\partial\Om)$. In fact, one has
\begin{align*}
    |ab|_{W^{s,p}(\partial\Om)}&=\left(\iint_{\partial\Om\times\partial\Om}\frac{|a(x)b(x)-a(y)b(y)|^p}{|x-y|^{N-1+sp}}d\H^{N-1}_xd\H^{N-1}_y\right)^{\frac{1}{p}}
    \\&\leq \left(\iint_{\partial\Om\times\partial\Om}\frac{|a(x)-a(y)|^p}{|x-y|^{N-1+sp}}|b(x)|^pd\H^{N-1}_xd\H^{N-1}_y\right)^{\frac{1}{p}}
    \\&+\left(\iint_{\partial\Om\times\partial\Om}\frac{|b(x)-b(y)|^p}{|x-y|^{N-1+sp}}|a(y)|^pd\H^{N-1}_xd\H^{N-1}_y\right)^{\frac{1}{p}}
    \\&\leq |a|_{H^{1/2}(\partial\Om)}\left(\iint_{\partial\Om\times\partial\Om}\frac{|b(x)|^{\frac{2p}{2-p}}}{|x-y|^{N+\frac{2}{2-p}(sp-1)}}d\H^{N-1}_xd\H^{N-1}_y\right)^{\frac{2-p}{2p}}
    \\&+|b|_{H^{1/2}(\partial\Om)}\left(\iint_{\partial\Om\times\partial\Om}\frac{|a(y)|^{\frac{2p}{2-p}}}{|x-y|^{N+\frac{2}{2-p}(sp-1)}}d\H^{N-1}_xd\H^{N-1}_y\right)^{\frac{2-p}{2p}}
\end{align*}
where we used Hölder inequality with exponents $\frac{2}{p}$ and $\frac{2}{2-p}$ in the last line. Now thanks to the choice of $p$ it holds
\[\begin{cases}
    \frac{2p}{2-p}=\frac{2(N-1)}{N-2}=2^*_{1/2} \text{ for } N\geq3,\\
    \frac{2p}{2-p}\in[2,\infty)\text{ if } N=2,
\end{cases}\]
where $2^*_{1/2}$ is the critical ($N-1$-dimensional) Sobolev exponent related to $\frac{1}{2}$ and $2$, so that the continuous embedding 
\[H^{1/2}(\partial\Om)\subset L^{\frac{2p}{2-p}}(\partial\Om)\] holds. On the other hand, as 
\[\frac{2}{2-p}(sp-1)<-1\]
since $s\in(0,\frac{1}{2})$, we deduce
\[\sup_{x\in \partial\Om}\int_{\partial\Om}\frac{d\H^{N-1}_y}{|x-y|^{N+\frac{2}{2-p}(sp-1)}}<\infty.\]
These yield the conclusion
\[\|ab\|_{W^{s,p}(\partial\Om)}\leq C\|a\|_{H^{1/2}(\partial\Om)}\|b\|_{H^{1/2}(\partial\Om)}\]
for some constant $C=C(N,\partial\Om, s,p)$.

\end{enumerate}
\end{proof}

\subsection{Compactness in classes of convex sets}\label{sect:Apphausd}
In this paragraph we gather some classical facts about Hausdorff distance and compactness in some classes of convex bodies. 

If $C_1$ and $C_2$ are non-empty compact subsets of $\R^N$, the Hausdorff distance $d_H(C_1,C_2)$ between $C_1$ and $C_2$ is defined as the quantity \begin{equation}\nonumber d_H(C_1,C_2):=\max\left\{\sup_{x\in C_1}d(x,C_2),\sup_{x\in C_2}d(x,C_1)\right\}\end{equation} where $d(\cdot,\cdot)$ denotes the euclidean distance. The Hausdorff distance $d_{H}$ is a distance over the class of non-empty compact sets of $\R^N$. We say that a sequence $A_j\subset \R^N$ of non-empty compact sets converges in the Hausdorff sense when it converges for $d_H$.

\begin{prop}\label{lem:compact} 
Let $D\in \K^N$ and $0<V_0<|D|$ and set
\[\mathcal{C}_1:=\left\{K\text{ compact convex of }  \R^N, K\subset D\right\}\] 
\[\mathcal{C}_2:=\left\{K\in\K^N,\ |K|=V_0,\ K\subset D\right\}\] 
\begin{enumerate}[label=(\roman{*})]
\item The classes $\mathcal{C}_1$ and $\mathcal{C}_2$ are compact for the Hausdorff distance.
\item There exists $\eps=\eps(V_0, D)>0$ such that for each $K\in \mathcal{C}_2$ the inradius $r_K$ of $K$ satisfies $r_K\geq \eps$.
\end{enumerate}
\end{prop}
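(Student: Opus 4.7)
The plan is to prove (i) via the Blaschke selection theorem together with a volume-continuity argument, and (ii) via John's ellipsoid combined with a Fubini slicing.

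For (i), the compactness of $\mathcal{C}_1$ is the classical Blaschke selection theorem: any sequence of nonempty compact convex subsets of the bounded set $D$ admits a Hausdorff-convergent subsequence, whose limit is again compact and convex, and is contained in $D$ since $D$ is closed. To obtain compactness of $\mathcal{C}_2 \subset \mathcal{C}_1$ it then suffices to prove that $\mathcal{C}_2$ is closed in $\mathcal{C}_1$. If $K_j \in \mathcal{C}_2$ converges in the Hausdorff sense to some $K \in \mathcal{C}_1$, then setting $\delta_j := d_H(K_j,K) \to 0$, the inclusions $K_j \subset K + B_{\delta_j}(0)$ and $K \subset K_j + B_{\delta_j}(0)$ yield
\[|K_j \Delta K| \leq |(K+B_{\delta_j}(0))\setminus K| + |(K_j+B_{\delta_j}(0))\setminus K_j| \to 0,\]
since both terms are controlled by $P(D)\delta_j + O(\delta_j^2)$ using monotonicity of the perimeter for convex bodies contained in $D$. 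Hence $|K|=V_0 > 0$, and since a convex set of positive Lebesgue measure necessarily has nonempty interior, $K \in \K^N$ and thus $K \in \mathcal{C}_2$.

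For (ii), the key input is John's theorem: every convex body $K \subset \R^N$ admits an ellipsoid $E$ (its maximal inscribed ellipsoid) with center $c$ such that $E \subset K \subset c + N(E-c)$. Denoting by $0 < a_1 \leq \ldots \leq a_N$ the semi-axes of $E$, one has $r_K \geq a_1$ since $E \subset K$ already contains a ball of radius $a_1$, while the minimum width of $c+N(E-c)$ (attained in the direction of the shortest semi-axis) equals $2Na_1 \leq 2Nr_K$. Consequently $K$ lies in some slab of width at most $2Nr_K$; combined with $K \subset D$, Fubini's theorem gives
\[V_0 = |K| \leq 2N r_K \cdot C_N\,\mathrm{diam}(D)^{N-1}\]
for some purely dimensional $C_N > 0$, which rearranges into the desired lower bound $r_K \geq \eps(V_0,D) > 0$.

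Both parts reduce to classical tools (Blaschke selection and John's theorem), so no substantial difficulty is anticipated; the only point requiring mild care is the volume-continuity step in (i), where convexity is essential to bound the measure of the $\delta_j$-neighborhood of $\partial K$, a property which would fail for Hausdorff limits of general compact sets.
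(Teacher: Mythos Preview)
Your proof is correct. Part (i) follows the same route as the paper (Blaschke selection for $\mathcal{C}_1$, closedness of $\mathcal{C}_2$ via volume continuity under Hausdorff convergence), with your version spelling out the volume-continuity step explicitly.

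Part (ii) is where you diverge. The paper argues by soft compactness: it shows the inradius map $K\mapsto r_K$ is lower semicontinuous for $d_H$, then invokes the compactness of $\mathcal{C}_2$ from (i) to conclude that $r_K$ attains a positive minimum on $\mathcal{C}_2$. Your argument instead uses John's ellipsoid to produce a \emph{direct quantitative} lower bound $r_K\geq V_0/(2NC_N\,\mathrm{diam}(D)^{N-1})$. This has two advantages: it is independent of part (i), and it yields an explicit $\eps(V_0,D)$ rather than an abstract minimum. The paper's approach, on the other hand, avoids any geometric machinery beyond Hausdorff convergence and is arguably more in the spirit of the surrounding compactness arguments. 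Both are short and standard; yours is the more constructive of the two.
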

\begin{proof}

\begin{enumerate}[label=(\roman{*})]

\item The Blaschke selection Theorem states that the class $\mathcal{C}_1$ is compact for the Hausdorff distance (see for instance \cite[Theorem 1.8.7]{Sc}). Compactness of $\mathcal{C}_2$ then follows from the fact that $\mathcal{C}_2$ is a closed subset of $\mathcal{C}_1$, thanks to the continuity of the volume for $d_H$.
\item Let us show that the inradius mapping $K\in\K^N\mapsto r_K$ is l.s.c. for $d_H$. Let $K\in\K^N$ and $K_j\in \K^N$ with $K_j\rightarrow K$ in Haudorff distance, and let $r>0$ and $x\in \R^N$ be such that $B_r(x)\Subset \text{Int}(K)$. Thanks to \cite[Proposition 2.8, 2.]{LP23}, we have that $K_j\supset B_r(x)$ for large enough $j$, so that $\lim\inf r_{K_j}\geq r$. This is valid for any $r<r_K$, so that $\lim\inf r_{K_j}\geq r_K$, thus showing that $K\in\K^N\mapsto r_K$ is l.s.c. Since furthermore $\mathcal{C}_2$ is compact for $d_H$, we deduce that $K\in \mathcal{C}\mapsto r_K$ has a minimum $\eps>0$, thus finishing the proof.
\end{enumerate}
\end{proof}

\begin{prop}\label{prop:cvgconv_hausd}
Let $B\subset\R^N$ be the centered unit ball. Let $K_j\in \K^N$ be a sequence of convex bodies such that $|K_j\Delta B|\rightarrow0$. Then $K_j\rightarrow B$ in the Hausdorff sense.
\end{prop}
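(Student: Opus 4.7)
My plan is to combine three standard ingredients: a uniform boundedness estimate for the sequence $(K_j)$, the Blaschke selection theorem (item (i) of Proposition \ref{lem:compact}), and the classical fact that Hausdorff convergence of convex sets contained in a fixed ball implies $L^1$ convergence of their indicators. The main obstacle lies in the boundedness step, since bounded volume alone does not force bounded diameter for convex bodies; the hypothesis $|K_j\Delta B|\to 0$ has to be exploited in both directions $|B\setminus K_j|\to 0$ and $|K_j\setminus B|\to 0$ simultaneously.

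For the boundedness step, I would argue as follows. Since $|B\setminus K_j|\to 0$, for $j$ large the convex set $K_j\cap B$ lies inside $B$ and has volume at least $|B|/2$. A compactness argument analogous to item (ii) of Proposition \ref{lem:compact} (applied to the class of convex subsets of $B$ of volume $\geq|B|/2$) then produces $r_0>0$ independent of $j$ such that each $K_j\cap B$ contains some ball $B(x_j,r_0)$ with $x_j\in B$. If $y\in K_j$ satisfies $|y|=R$, convexity forces $K_j$ to contain the convex hull of $\{y\}\cup B(x_j,r_0)$, and a direct computation shows that the portion of this "cone" lying outside $B$ has volume bounded below by $c_N\, R\, r_0^{N-1}$ for $R$ large enough (one keeps only the subcone of points closer to $y$ than to the base, which for $R\gg1$ lies entirely outside $B$). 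Because $|K_j\setminus B|\to 0$, this bounds $R$ uniformly for $j$ large, yielding $M>0$ with $K_j\subset B(0,M)$ for every $j$ (after possibly enlarging $M$ to absorb finitely many initial terms).

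Given this uniform bound, item (i) of Proposition \ref{lem:compact} implies that every subsequence of $(K_j)$ admits a further subsequence converging in Hausdorff distance to a compact convex set $K\subset\overline{B(0,M)}$. Hausdorff convergence of convex sets inside a fixed ball yields $L^1$ convergence of the indicators, via the inclusions $K_{j_k}\subset K+\eps\overline{B}$ and $K\subset K_{j_k}+\eps\overline{B}$ for arbitrarily small $\eps$, combined with the fact that $|(K+\eps\overline{B})\setminus K|$ (and the analogous quantity for $K_{j_k}$, controlled uniformly by Steiner's formula) tends to $0$ as $\eps\to 0$. Thus $|K\Delta B|=\lim_k|K_{j_k}\Delta B|=0$, and in particular $|K|=|B|>0$, so $K\in\K^N$. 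Then $\mathrm{Int}(K)\setminus B$ and $\mathrm{Int}(B)\setminus K$ are open sets of measure zero, hence empty, and taking closures gives $K=B$. The standard subsequence principle in the metric space $(\K^N,d_H)$ finally promotes this to Hausdorff convergence of the full sequence $K_j\to B$, concluding the proof.
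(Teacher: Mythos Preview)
Your proof is correct and follows the same high-level strategy as the paper: establish a uniform bound $K_j\subset B_M(0)$, then invoke the Blaschke selection theorem and identify the unique possible limit as $B$.

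The difference lies in how the uniform bound is obtained. You first extract an inner ball $B(x_j,r_0)\subset K_j\cap B$ of uniform radius via (a slight variant of) Proposition \ref{lem:compact}(ii), and then use a cone-volume lower bound for $\mathrm{conv}\bigl(\{y\}\cup B(x_j,r_0)\bigr)\setminus B$ to contradict $|K_j\setminus B|\to 0$ if $|y|$ were unbounded. The paper instead works by hand: assuming some $x^j\in K_j$ with $|x^j|=2$, it picks $N$ fixed points $x_0,\dots,x_{N-1}\in\partial B$, uses $|B\setminus K_j|\to 0$ to find nearby points $y_i^j\in K_j$, and shows by continuity of the convex-hull map that $\mathrm{conv}\{x^j,y_0^j,\dots,y_{N-1}^j\}$ contains a fixed ball $B'\Subset\mathbb{R}^N\setminus B$, again contradicting $|K_j\setminus B|\to 0$. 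Your route is cleaner in that it reuses the inradius lemma already available and gives a quantitative diameter bound, while the paper's construction yields the explicit radius $M=2$ but is more ad hoc. Both are equally valid.

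One small remark: since $B$ is open and the $K_j$ are compact, the Hausdorff limit you obtain is $\overline{B}$; the statement ``$K_j\to B$'' is a harmless abuse of notation that the paper also makes. Your identification step (open null sets are empty, then take closures) handles this correctly.
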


\begin{proof}
It suffices to show that there exists a bounded set $D\subset\R^N$ such that $K_j\subset D$ for each $j$, since the Blaschke selection Theorem then applies to provide compactness of the sequence $K_j$ for $d_H$ and therefore the convergence of the whole sequence $K_j$ to $B$. Fix $j\in\N$; since $|K_j\Delta B|\rightarrow0$ we can suppose $j$ large enough so that $K_j\cap B\neq\emptyset$. 

Suppose now that $K_j\not\subset B_2(0)$. Since $K_j\cap B\neq\emptyset$, by convexity of $K_j$ we can find $x^j\in K_j$ with $|x^j|=2$, which we will suppose (up to changing coordinates) to be written $x^j:=x=(0,\ldots,0,2)$. Let $x_0:=(-1,0\ldots,0)$ and set $x_1:=(1,0\ldots,0)$, $x_2=(0,1,0\ldots,0)$ until $x_{N-1}:=(0,\ldots,0,1,0)$. Let finally $C:=\text{conv}\{x,x_0,x_1\ldots,x_{N-1}\}$ and some ball $B'\Subset \text{Int}(C\setminus B)$. 

Let $f:\R^{N+1}\rightarrow\K^N$ be defined by 
\[f(y_1,\ldots,y_{N+1})=\text{conv}\{y_1,\ldots,y_{N+1}\}=\left\{\sum_{i=1}^{N+1}\lambda_iy_i,\ \sum_{i=1}^{N+1}\lambda_i=1,\ \lambda_i\geq0\right\}\]
Then $f$ is continuous for the Hausdorff distance, so that there exists $\eps>0$ such that if for all $0\leq i\leq N-1$, $|z_i-x_i|\leq\eps$ then $\text{conv}\{x,z_1,\ldots,z_N\}\supset B'$ (see for instance \cite[Proposition 2.8, 2.]{LP23}). Now, let $\delta:=\min_i\{|B\cap B_\eps(x_i)|\}>0$. Taking $j$ sufficiently large so that $|K_j\Delta B|< \delta$, this implies that for such $j$ there exists for each $i=0,\ldots,N-1$ some $y_i^j\in K_j\cap B_\eps(x_i)$. By convexity, $K_j\supset \text{conv}\{x,y^j_0\ldots,y^j_{N-1}\}$, which itself contains $B'$, thus giving $|K_j\setminus B|\geq |B'|$. This does not happen for sufficiently large $j$, and as a consequence there exists $j_0\geq0$ such that $K_j\subset B_2(0)$ for $j\geq j_0$. This proves the claim.
\end{proof}

\bibliographystyle{alpha}
\bibliography{stab.bib}

\end{document}